\newcommand{\g}{\gamma}
\newcommand{\e}{\epsilon}
\renewcommand{\t}{\tau}
\newcommand{\dee}{\partial}
\newcommand{\ove}{\overline}
\newcommand{\eps}{\varepsilon}
\renewcommand{\gamma}{\upgamma}
\newcommand{\N}{\mathbb{N}}
\newcommand{\R}{\mathbb{R}}
\newcommand{\cal}{\mathcal}
\newcommand{\bw}{\textbf{w}}
\newcommand{\bW}{\textbf{W}}
\renewcommand{\S}{\mathbb{S}}
\newcommand{\W}{\mathcal{W}}
\renewcommand{\d}{\mathrm{d}}
\newcommand{\til}{\widetilde}
\DeclareMathOperator\length{length}
\let\Im\relax
\DeclareMathOperator\Im{Im}
\DeclareMathOperator\diam{diam}
\DeclareMathOperator\dist{dist}
\DeclareMathOperator\card{card}
\DeclareMathOperator\Mod{Mod}
\let\emptyset\varnothing
\newcommand{\lsim}{\lesssim}
\newtheorem{Thm}{Theorem}[section]
\newtheorem*{Thm*}{Theorem}
\newtheorem{Prop}[Thm]{Proposition}
\newtheorem*{Prop*}{Proposition}
\newtheorem{Lem}[Thm]{Lemma}
\newtheorem*{Lem*}{Lemma}
\newtheorem*{Cla*}{Claim}
\theoremstyle{remark}
\newtheorem*{Rem*}{Remark}
\theoremstyle{definition}
\newtheorem*{Def*}{Definition}
\newtheorem{quest}[Thm]{Question}
\newtheorem{Rem}[Thm]{Remark}
\numberwithin{equation}{section}
\title{Quasisymmetric rectifiability of uniformly disconnected sets}
\author{Jacob Honeycutt}
\author{Vyron Vellis}
\thanks{J.H. and V.V. were partially supported by NSF DMS grant 2154918.}
\address{Department of Mathematics\\ The University of Tennessee\\ Knoxville, TN 37966}
\email{jhoney12@vols.utk.edu}
\address{Department of Mathematics\\ The University of Tennessee\\ Knoxville, TN 37966}
\email{vvellis@utk.edu}
\date{\today}
\subjclass[2020]{Primary 30L05; Secondary 30L10, 51F99}
\keywords{quasisymmetric arc, uniformly disconnected set, Poincar\'e inequality, uniform domain}
\begin{document}

\begin{abstract}
We prove that uniformly disconnected subsets of metric measure spaces with controlled geometry (complete, Ahlfors regular, supporting a Poincar\'e inequality, and a mild topological condition) are contained in a quasisymmetric arc.
This generalizes a result of MacManus \cite{M99} from Euclidean spaces to abstract metric setting.
Along the way, we prove a geometric strengthening of the classical Denjoy-Riesz theorem in metric measure spaces.
Finally, we prove that the complement of a uniformly disconnected set in such a metric space is uniform, quantitatively.
\end{abstract}

\maketitle

\section{Introduction}
In the early 1900s, Denjoy \cite{Denjoy} and Riesz \cite{Riesz} independently proved what has come to be known as the Denjoy-Riesz Theorem, which states that \emph{a compact, totally disconnected subset of the plane is contained in an arc.} Recall that a totally disconnected set is one whose connected components are singletons (or equivalently, one that has topological dimension equal to zero) and an arc is the homeomorphic image of the unit interval. This builds upon work by Zoretti \cite{Zoretti}, which instead used a more general class of arcs than just Jordan arcs. More generally, Moore and Kline \cite{MK1919} came up with a complete characterization of subsets of the plane which are contained in arcs, and this includes the Denjoy-Riesz Theorem as a special case. Later, Zippin \cite{Zippin} provided a complete classification of metric spaces for which the Moore-Kline Theorem holds. A corollary of Zippin's work is that in a compact, connected, locally connected metric space with no local cut-points, every compact totally disconnected set is contained in an arc; see also \cite{Whyburn}. A useful application of the (generalized) Denjoy-Riesz theorem is that for every $n\geq 2$, there is an arc in $\R^n$ that has positive Lebesgue measure, and for every $n\geq 3$ there is a \emph{wild} arc in $\R^n$, that is, an arc that is not ambiently homeomorphic to a line segment.

It is natural to ask whether one can obtain more regularity on the arc which passes through the entire set. David and Semmes \cite{DS91} proved a bi-Lipschitz version of the Moore-Kline Theorem: if $n\geq 3$, then a bounded subset of $\R^n$ is contained in a bi-Lipschitz arc in $\R^n$ if and only if it is bi-Lipschitz homeomorphic to a bounded subset of $\R$. MacManus \cite{MM95} extended the result of David and Semmes to the case $n=2$, which is more difficult since intersecting lines in dimensions greater than two can be modified so that they no longer intersect, but this is not the case in $\R^2$. On one hand, these ``bi-Lipschitz rectifiability'' theorems can be viewed as a topological strengthening of the classical Lipschitz rectifiability theory which has been the focus of the Analyst's Traveling Salesman Theorem \cite{jones,oki}. On the other hand, they can be viewed as rough versions of the classical Whitney extension theorem \cite{W34}.

To generalize the bi-Lipschitz rectifiability results to an abstract metric space $X$, in addition to the conditions of Zippin, it is necessary to assume certain geometric properties which ensure that $X$ has plenty of rectifiable curves. Two such conditions are \emph{Ahlfors regularity} and a \emph{Poincar\'e inequality}. A metric space is Ahlfors $Q$-regular (or simply $Q$-regular) if there exists a measure $\mu$ on $X$ so that the measure of every ball $B(x,r)$ is comparable to $r^Q$. A metric measure space has a Poincar\'e inequality if for every Lipschitz function $u:X \to \R$, the oscillation of $u$ on each ball $B$ is controlled by the average of a ``weak derivative'' of $u$ on $B$. See Section \ref{Sec:Preliminaries} for all relevant definitions. It is known that Ahlfors regular spaces supporting a Poincar\'e inequality must contain quantitatively many rectifiable curves and admit a notion of differentiation \cite{C99}. The two authors along with Zimmerman \cite{HVZ24} showed that in Ahlfors regular metric measure spaces with a suitable Poincar\'e inequality, the bi-Lipschitz rectifiability classification of David and Semmes holds.

The goal of this paper is to provide a \emph{quasisymmetric} version of the Denjoy-Riesz theorem in abstract metric spaces. Roughly speaking, a homeomorphism between metric spaces is quasisymmetric if any three points with comparable mutual distances are mapped to three points with comparable mutual distances in a quantitative fashion. Due to Buerling and Ahlfors \cite{BA56}, quasisymmetric maps are a generalization of conformal maps to the abstract metric setting which have been instrumental in the development of analysis on metric spaces. Quasisymmetric arcs (quasisymmetric images of $[0,1]$) have appeared in geometric function theory, geometric group theory (limits of quasi-Fuchsian groups), complex dynamics (Julia sets of certain rational functions), function spaces (domains of extension for Sobolev and BMO maps), Loewner evolution (traces of driving functions $\lambda$ with H\"older norm less than 4), potential theory (Dirichlet integrals and harmonic bending), and hyperbolic geometry (bi-Lipschitz equivalence of hyperbolic and distance-ratio metrics). See \cite{GehringHag} for a detailed exposition.

To replace ``arc'' by ``quasisymmetric arc'' in the Denjoy-Riesz theorem, necessarily some geometric assumptions should be made about the set $E$. For example, if $E\subset \R^n$ is a Cantor set with positive Lebesgue $n$-measure, then any arc that contains $E$ will have positive Lebesgue $n$-measure so it can not be a quasisymmetric arc \cite[Theorem 4.1]{Vai81}. A quantitative version of total disconnectedness was introduced by David and Semmes \cite{DS97}: a metric space $E$ is \emph{uniformly disconnected} if for all $x\in E$ and all $r \in (0,\diam{E})$ there exists a set $E_{x,r}\subset E$ such that $x\in E_{x,r}$, $\diam{E_{x,r}} \leq r$, and $\dist(E_{x,r},E\setminus E_{x,r})$ is at least a fixed multiple of $r$ (i.e., $E$ has isolated islands at every scale and at every point). Using this notion, MacManus \cite{M99} proved a quasisymmetric version of the Denjoy-Riesz theorem in Euclidean spaces.

\begin{Thm}[{\cite[Theorem 1]{M99}}]\label{thm:MM}
A compact, uniformly disconnected subset of $\R^n$ is contained in a quasisymmetric arc. 
\end{Thm}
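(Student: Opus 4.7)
The plan is to produce an arc $\Gamma \subset \R^n$ containing $E$ and then invoke the Tukia--V\"ais\"al\"a characterization of quasisymmetric arcs: a metric arc is quasisymmetrically equivalent to $[0,1]$ if and only if it is doubling and of bounded turning. Since $\R^n$ is doubling, any subset of $\R^n$ inherits doubling automatically, so the substantive issue is to build $\Gamma$ with bounded turning, meaning $\diam(\Gamma[x,y]) \lsim |x-y|$ for every $x,y\in\Gamma$, where $\Gamma[x,y]$ denotes the subarc between them.

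First, using uniform disconnectedness with constant $c \in (0,1)$, I would build a rooted tree $T$ of pieces: at depth $k$, $E$ is partitioned into disjoint compact sets $\{E_{k,i}\}_i$ of diameter at most $c^k \diam E$ and pairwise distance $\gsim c^k$, with each $E_{k,i}$ a union of its children at depth $k+1$. Since $E\subset\R^n$ is doubling, $T$ has uniformly bounded branching. For each internal node I would then linearly order its children; a geometrically natural choice, e.g.\ the order minimizing the total length of a tour through the centers of the children, keeps consecutive siblings at distance comparable to the parent's scale.

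The heart of the argument is the construction of the connecting arcs. For each node $E_{k,i}$ with ordered children $E_{k+1,j_1},\dots,E_{k+1,j_m}$, I would choose arcs $\alpha_{k,i,\ell}$ in $\R^n$ joining a distinguished point of $E_{k+1,j_\ell}$ to one of $E_{k+1,j_{\ell+1}}$ so that (i) $\alpha_{k,i,\ell}\cap E$ consists exactly of the two endpoints, (ii) $\diam\alpha_{k,i,\ell}\lsim c^k$ and $\alpha_{k,i,\ell}$ is contained in the $O(c^k)$-neighborhood of $E_{k,i}$, and (iii) all connecting arcs across all scales are pairwise disjoint outside their shared endpoints. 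Setting $\Gamma := E \cup \bigcup_{k,i,\ell}\alpha_{k,i,\ell}$, the order on leaves of $T$ induces a parameterization $[0,1]\to \Gamma$ which, by the exponential decay of $c^k$ together with the disjointness constraints, is a continuous injection, so $\Gamma$ is an arc. Bounded turning then falls out: given $x,y\in\Gamma$, let $k$ be the largest depth at which they both lie in a common piece (together with its internal connecting arcs); then $\Gamma[x,y]$ is contained in that union so $\diam(\Gamma[x,y])\lsim c^k$, while uniform disconnectedness and (iii) force $|x-y|\gsim c^k$.

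The hard part is property (iii): globally coordinating the connecting arcs so that their diameters remain scale-controlled and they never collide across different scales. For $n\geq 3$ this can be handled by general-position arguments and iterative Tietze-type extensions, since generically one-dimensional objects in $\R^n$ miss each other. In $\R^2$ the construction must instead be delicately nested: one routes the scale-$k$ arcs through thin tubular neighborhoods in the complement of $E$, and reserves still thinner sub-tubes inside these for higher-scale arcs so as to prevent collisions. This is the most intricate step and is where the Euclidean linear structure is really used; extending the argument beyond $\R^n$ requires replacing general position with a curve-production mechanism of a different nature.
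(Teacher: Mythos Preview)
Your outline is a legitimate route and is close in spirit to MacManus's original argument. Note that the paper does not reprove this Euclidean statement: Theorem~\ref{thm:MM} is cited, and the paper instead proves the metric generalization Theorem~\ref{thm:main1}. The comparison below is therefore between your sketch and that proof.

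The two differ in how they certify quasisymmetry. You appeal to Tukia--V\"ais\"al\"a (doubling plus bounded turning); the paper builds an explicit parametrization $\Phi$ and verifies weak quasisymmetry directly (Proposition~\ref{Prop:Quasisymmetric}). The combinatorial skeleton---a tree of nested pieces with bounded branching---is the same, but the paper replaces your general-position/tube-threading step with Lemma~\ref{Lem:ApproximatingCurve}, which uses porosity and the Poincar\'e inequality to produce bi-Lipschitz arcs that quantitatively avoid a given $1$-HTB set. This is exactly the ``curve-production mechanism of a different nature'' you anticipate in your last sentence, and it is what allows the argument to leave~$\R^n$.

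There is, however, a genuine gap in your bounded-turning step. You take $k$ maximal with $x,y$ in a common enlarged piece and claim $|x-y|\gsim c^k$ from ``uniform disconnectedness and (iii)''. But (iii) as you state it is only topological disjointness, which yields no quantitative lower bound; what is actually needed is that sibling connecting arcs at level $k$ are separated by $\gsim c^k$, and that each level-$k$ arc stays $\gsim c^k$ away from non-adjacent level-$(k{+}1)$ chunks. Even with this upgrade, one case is not covered: if $x$ lies on a level-$k$ connecting arc near its endpoint and $y$ lies in the adjacent level-$(k{+}1)$ chunk, then $|x-y|$ can be arbitrarily small relative to $c^k$, and your argument does not recurse to a finer scale. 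Controlling this junction behavior is precisely what the paper isolates in Section~\ref{sec:modif} (local modifications at entry/exit balls) and in the case analysis of Lemma~\ref{Lem:QuasisimilarConcatenation}. So the strategy is right, but (iii) must be made quantitative and the junctions must be treated explicitly.
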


Our main focus is to generalize Theorem \ref{thm:MM} to a larger class of metric measure spaces. Towards this end, a topological condition in the spirit of Zippin's no-local-cut-point condition is assumed on the space $X$. We say that a set $E\subset X$ is a \emph{cut-set} if $X\setminus E$ is not connected; otherwise we call $E$ a \emph{full set}. The latter term is inspired by \cite{NS} where a planar set $E$ is called full if $\R^2 \setminus E$ has no bounded components. The assumptions of $Q$-regularity and a $Q$-Poincar\'e inequality on $X$ yield that if $E$ is a compact set such that $X\setminus E$ has no bounded components, then $E$ is a full set; see Lemma \ref{Lem:LargeComponent}. Singletons in such a space $X$ can not be local cut-sets but the same is not true with totally disconnected (or even uniformly disconnected) sets; 
see the discussion after Question \ref{quest:QSext}.


The next topological condition is designed to make sure that closed totally disconnected sets do not separate the space. A metric space $X$ is \emph{fully connected} if it is path connected, locally path connected, and every full set is locally full. That is, if $X\setminus E$ is connected and if $U$ is an open, connected set such that $U\cap E \neq \emptyset$ and $E\cap \partial U = \emptyset$, then $U\setminus E$ is connected. For example, all Euclidean spaces $\R^n$ are fully connected, while tori $(\S^1)^n$ are not. Our main theorem is as follows.

\begin{Thm}\label{thm:main1}
Let $X$ be a complete, fully connected, unbounded, $Q$-regular metric measure space supporting a $(Q-1)$-Poincar\'e inequality with $Q> 2$. If $E\subset X$ is a compact, uniformly disconnected set, then $E$ is contained in a quasisymmetric arc.
\end{Thm}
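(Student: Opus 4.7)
My plan is to adapt the construction of MacManus~\cite{M99} to the abstract metric setting, using the Poincar\'e inequality, the full-connectedness of $X$, and (critically) the uniformity of $X\setminus E$ (the third main result of the paper) as replacements for the Euclidean-specific tools used in \cite{M99}.

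\textbf{Hierarchical decomposition and connecting arcs.} From uniform disconnectedness I would first extract, for each $n\in\N$, a partition $\mathcal{E}_n$ of $E$ into finitely many clumps $E_\nu^n$ with $\diam E_\nu^n \lesssim \lambda^n$ and $\dist(E_\nu^n, E_\mu^n) \gtrsim \lambda^n$ for $\nu\neq \mu$, where $\lambda\in(0,1)$ depends on the uniform disconnectedness constant. Since $X$ (hence $E$) is doubling, each $E_\nu^n$ contains only boundedly many level-$(n+1)$ clumps, so the clumps organize into a rooted tree $\mathcal{T}$ of uniformly bounded degree whose infinite branches correspond to points of $E$. Using that $X\setminus E$ is a uniform domain, for each internal node $\nu\in\mathcal{T}$ at level $n$ with children $\nu_1,\dots,\nu_k$, I would choose an ordering of the children and, for each consecutive pair, construct a Jordan arc $\gamma_i$ joining representative points of $E_{\nu_i}^{n+1}$ and $E_{\nu_{i+1}}^{n+1}$ with $\diam\gamma_i \lesssim \lambda^n$ and $\gamma_i$ avoiding all other level-$(n+1)$ clumps; the metric Denjoy--Riesz theorem of the paper, together with full connectedness of $X$, guarantees the required connectivity, while the uniform-domain structure of $X\setminus E$ supplies the required diameter control. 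Concatenating these $\gamma_i$ according to a depth-first traversal of $\mathcal{T}$ produces polygonal arcs $\Gamma_n$ which stabilize in Hausdorff distance to a Jordan arc $\Gamma$ containing $E$.

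\textbf{Verification of quasisymmetry.} To verify that $\Gamma$ is a quasisymmetric arc, I would apply the Tukia--V\"ais\"al\"a criterion: a metric arc is quasisymmetrically equivalent to $[0,1]$ iff it is doubling and has bounded turning. Doubling of $\Gamma$ is inherited from doubling of $X$. For bounded turning, given $p,q\in\Gamma$, let $\nu$ be the least common ancestor in $\mathcal{T}$ of the clumps (or connecting arcs) containing $p$ and $q$, and let $n$ be its level; by construction the subarc of $\Gamma$ from $p$ to $q$ lies in the union of $E_\nu^n$ with finitely many connecting arcs at levels $\geq n$, all having diameter $\lesssim \lambda^n$, whereas $d(p,q)\gtrsim \lambda^n$ because $p$ and $q$ lie in distinct children of $\nu$ at level $n+1$.

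\textbf{Main obstacle.} The technically demanding step is the simultaneous construction of the $\gamma_i$: they must be (i) short (diameter controlled by the scale of the parent), (ii) pairwise disjoint across all levels, and (iii) compatible with the ordering inherited from coarser scales. This is where the $(Q-1)$-Poincar\'e inequality and the full-connectedness hypothesis enter essentially, and where the argument diverges from the Euclidean original: in $\R^n$, straight segments (after generic perturbation) suffice, whereas here one must exploit the rich supply of rectifiable curves from the Poincar\'e inequality and the uniform-domain structure of $X\setminus E$ to realize each $\gamma_i$ with the required geometry.
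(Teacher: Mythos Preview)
Your outline shares the paper's overall architecture---hierarchical decomposition, connecting arcs at each scale, then a quasisymmetry verification---but differs at several structural points and has one genuine gap at the step you yourself flag as the main obstacle.

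\textbf{The gap: disjointness.} You propose to use the uniformity of $X\setminus E$ to produce the connecting arcs $\gamma_i$. Uniformity gives you curves that stay away from $E$ (quantitatively, except near endpoints) and have controlled length, but it gives you no mechanism for making the $\gamma_i$ avoid \emph{one another} across levels. That is the actual difficulty, and it is not solved by co-uniformity. The paper's engine for this is a separate lemma (Lemma~\ref{Lem:ApproximatingCurve}): in a $Q$-regular space with a $p$-Poincar\'e inequality for $p<Q-1$, any curve can be approximated by a bi-Lipschitz arc that stays a definite distance from any prescribed set that is $(C_1,\alpha)$-HTB with $\alpha<Q-p$. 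The previously built arcs are $1$-HTB (being bi-Lipschitz images of intervals), so one can inductively thread each new arc around all earlier ones, with quantitative separation. This is exactly where the strengthened $(Q-1)$-Poincar\'e hypothesis is used, and your sketch does not supply a substitute for it; the uniform-domain curves have no reason to be bi-Lipschitz, let alone to dodge a growing $1$-HTB set.

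\textbf{Structural differences.} Two further points distinguish your plan from the paper's construction. First, the paper does not join representative points of the clumps $E_\nu^{n}$; instead it thickens $E$ into a nested family of full open sets $\{U_w\}$ (Proposition~\ref{Prop:DefiningSequence}) and builds the arcs inside the annular shells $\hat U_{wi}=U_{wi}\setminus\bigcup_j U_{wij}$, running between designated entry and exit balls on $\partial U_{wi}$ and $\partial U_{wij}$. This keeps every finite-stage arc at definite distance from $E$ and localizes the disjointness problem to shells that either coincide, are nested, or are far apart (Lemma~\ref{lem:distofhats}); your scheme, with arcs terminating at points of $E$, forces arcs at adjacent levels to share endpoints and makes the injectivity and separation bookkeeping harder. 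Second, rather than invoking the Tukia--V\"ais\"al\"a bounded-turning criterion, the paper builds an explicit parameterization $\Phi:[0,1]\to X$ (via a matching subdivision of $[0,1]$ into intervals $J_w$, $I_{w,i}$) and verifies weak quasisymmetry directly (Proposition~\ref{Prop:Quasisymmetric}). Your Tukia--V\"ais\"al\"a route is legitimate in principle, but your bounded-turning argument as stated only treats the case where $p,q$ lie in distinct clumps; when one or both lie on connecting arcs at intermediate levels you still need a lower bound $d(p,q)\gtrsim\lambda^n$, and that again requires the quantitative separation of arcs that your proposal has not secured.

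Finally, note that the paper does not use Theorem~\ref{Thm:Uniformity} (co-uniformity) as an input to Theorem~\ref{thm:main1}; the two are proved independently, with co-uniformity requiring only a $Q$-Poincar\'e inequality and the quasisymmetric arc requiring the stronger $(Q-1)$-Poincar\'e inequality precisely because of the curve-avoidance step above.
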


This theorem is quantitative in the sense that the quasisymmetric control function depends only on the constants of Ahlfors regularity, the constants of the Poincar\'e inequality, and the constant of uniform disconnectedness. Canonical examples of of metric measure spaces satisfying the assumptions of Theorem \ref{thm:main1} are Carnot groups, which include all Euclidean spaces and all Heisenberg groups. We will not define Carnot groups here but only mention that they are topologically identical to Euclidean space, but geometrically can behave much worse; although they are geodesic spaces, they have much fewer rectifiable curves and surfaces.

The techniques and tools used in the proof of Theorem \ref{thm:MM}, as well as modern proofs of the Denjoy-Riesz theorem, involve several classical tools from Euclidean spaces which are not available here. First, Euclidean spaces have a \emph{dyadic decomposition} of cubes. This decomposition can be used to create a defining sequence for any Cantor set which consists of piecewise linear $n$-manifolds with boundary; see for example \cite[Section 2]{M99} and \cite[Corollary 5.2]{BV19}. While generalizations of dyadic decompositions exist in general metric spaces \cite{HyKa,KaRaSu}, here we take a more simple approach based on open neighborhoods and create ``good'' defining sequences for all compact, totally disconnected sets. Second, in Euclidean spaces, arcs can be approximated in the Hausdorff distance by bi-Lipschitz arcs quantitatively, but this is much less obvious in metric spaces with fractal directions such as the Heisenberg group. We resolve this issue by using the Poincar\'e inequality and Ahlfors regularity in order to generate nice curves; see \textsection\ref{sec:outline} for further discussion.

In fact, the techniques of this paper allow us to establish the following strengthening of the Denjoy-Riesz Theorem. Here, $\dim_H{X}$ denotes the Hausdorff dimension of a metric space $X$.

\begin{Thm}\label{Thm:main2}
Let $X$ be a complete, fully connected, unbounded, $Q$-regular metric measure space supporting a $(Q-1)$-Poincar\'e inequality with $Q> 2$. If $E \subset X$ is a compact, totally disconnected set, then there exists an embedding $f:[0,1] \to X$ such that $E\subset f([0,1])$ and $f$ is locally bi-Lipschitz away from $f^{-1}(E)$. In particular, 
$\dim_H{f([0,1])} = \max\{1,\dim_{H}{E}\}$.
\end{Thm}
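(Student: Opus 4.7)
The plan is to construct $f:[0,1]\to X$ as the uniform limit of parametrizations $f_k$ associated to a good defining sequence for $E$. Since $E$ is compact and totally disconnected, I would first choose a nested sequence of open neighborhoods $U_0 \supset U_1 \supset \cdots \supset E$ such that each $U_k$ decomposes as a disjoint union of finitely many open, connected ``pieces'' $V_{k,1},\ldots,V_{k,N_k}$, with $\max_i\diam V_{k,i} \to 0$, $\overline{U_{k+1}}\subset U_k$, and $\{V_{k+1,j}\}$ refining $\{V_{k,i}\}$. Using the full-connectedness of $X$ together with Lemma \ref{Lem:LargeComponent}, each $V_{k,i}$ and each $V_{k,i}\setminus E$ can be arranged to be connected, making the collection into a ``tree of punctured pieces''. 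A depth-first enumeration of this refinement tree then induces a consistent linear order on the pieces at each level, which dictates the order in which $f$ visits them.

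Next, I would connect consecutive pieces at each level by bi-Lipschitz arcs. For two consecutive pieces $V_{k,i}$ and $V_{k,i+1}$ sharing a common parent $V_{k-1,j}$, I would construct a bi-Lipschitz arc $\gamma_{k,i}$ between boundary points of $V_{k,i}$ and $V_{k,i+1}$, contained in $V_{k-1,j}\setminus\bigl(E \cup \bigcup_{m\neq i,i+1} V_{k,m}\bigr)$, with length $\lsim \diam V_{k-1,j}$. The main analytic input is that $Q$-regularity with $Q>2$ and the $(Q-1)$-Poincar\'e inequality together provide, at every location and scale, bi-Lipschitz arcs (with constants depending only on the geometry of $X$) connecting prescribed endpoints while avoiding $E$, essentially because the totally disconnected set $E$ has null $(Q-1)$-modulus in the relevant curve families. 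This is the main obstacle, and it is where all the geometric hypotheses are deployed simultaneously.

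With the arcs in hand, I would define $f_k:[0,1]\to X$ by assigning to each piece $V_{k,i}$ a subinterval of length proportional to $\diam V_{k,i}$, and parametrizing each connecting arc $\gamma_{k,i}$ on the gap between consecutive subintervals via its bi-Lipschitz parametrization. The subintervals are chosen coherently across levels so that $f_{k+1}$ agrees with $f_k$ outside the subintervals corresponding to level-$k$ pieces. Uniform convergence $f_k\to f$ follows from $\max_i\diam V_{k,i}\to 0$, and the limit is injective by the tree structure and disjointness of connecting arcs, hence an embedding of $[0,1]$ into $X$ whose image contains $E$. Any $t_0\in[0,1]\setminus f^{-1}(E)$ lies, at some finite level, in a connecting-arc subinterval on which the parametrization is no longer refined, so $f$ is bi-Lipschitz on a neighborhood of $t_0$.

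Finally, the image decomposes as $f([0,1]) = E \cup \bigcup_{k,i}\gamma_{k,i}$, where the $\gamma_{k,i}$ form a countable family of bi-Lipschitz arcs of Hausdorff dimension $1$, so countable stability of $\dim_H$ yields $\dim_H f([0,1]) = \max\{1,\dim_H E\}$. The hardest step throughout is the quantitative construction of bi-Lipschitz connecting arcs of controlled length avoiding $E$; the remaining work is combinatorial bookkeeping of the subintervals and the refinement tree.
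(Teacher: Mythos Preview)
Your outline follows the same overall strategy as the paper, but there are two genuine gaps that the paper works hard to close.

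First, you assert ``disjointness of connecting arcs'' to conclude injectivity, but you never arrange it. Your arc $\gamma_{k,i}$ lives in $V_{k-1,j}\setminus\bigl(E \cup \bigcup_{m\neq i,i+1} V_{k,m}\bigr)$, so it is allowed to enter $V_{k,i}$ and $V_{k,i+1}$; connecting arcs at level $k+1$ inside those two pieces (and the level-$k$ arcs $\gamma_{k,i-1}$, $\gamma_{k,i+1}$) can then cross $\gamma_{k,i}$. The paper handles this by forcing each new arc to avoid not only the children $U_{wi}$ but the union of all previously built arcs, which is a $1$-HTB and hence porous set in the $Q$-regular space; this is precisely Lemma~\ref{Lem:ApproximatingCurve} together with Proposition~\ref{Prop:AllGen}(2), and it requires the even/odd generation bookkeeping of \textsection\ref{sec:even}--\textsection\ref{sec:odd}. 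Relatedly, your justification for avoiding $E$ via ``null $(Q-1)$-modulus'' is incorrect: a totally disconnected $E$ can have Hausdorff dimension arbitrarily close to $Q$. One does not avoid $E$ by a modulus argument; one confines the arcs to the shell $\hat U_w=\overline{U_w}\setminus\bigcup_i U_{wi}$, whose path-connectedness is exactly where full connectedness enters (property~(D6) of Proposition~\ref{Prop:DefiningSequence}).

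Second, ``$f$ is bi-Lipschitz on a neighborhood of $t_0$'' does not follow merely from each arc being individually bi-Lipschitz: at each junction where $\gamma_{k,i}$ meets the first level-$(k+1)$ arc on $\partial V_{k,i}$ the concatenation can form a cusp. The paper devotes Section~\ref{sec:modif} to local modifications---auxiliary bridging arcs $\Gamma$ and geodesic shortcuts $g^L,g^R$ joining nearest points---so that each triple of consecutive pieces becomes a single bi-Lipschitz embedding (Lemma~\ref{Lem:QuasisimilarConcatenation}). Without this step your limit $f$ is only a topological embedding, not locally bi-Lipschitz off $f^{-1}(E)$, and the dimension claim $\dim_H f([0,1])=\max\{1,\dim_H E\}$ would not follow.
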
 

%
%

One can also ask whether a complete classification of quasisymmetrically rectifiable sets exists, in the spirit of Moore-Kline (for arcs) and of David-Semmes and Honeycutt-Vellis-Zimmerman (for bi-Lipschitz arcs). In the bi-Lipschitz classification, the key ingredient is the existence of bi-Lipschitz extension theorems \cite{DS91,HVZ24}. In contrast, quasisymmetric maps are in general much harder to extend and we leave the following as an open question. 

\begin{quest}\label{quest:QSext}
Let $X$ be a complete, fully connected, unbounded, $Q$-regular metric measure space supporting a $(Q-1)$-Poincar\'e inequality for some $Q>2$. If $E\subset \R$ and $f:E \to X$ is a quasisymmetric embedding, is there a quasisymmetric extension $F:\R \to X$?
\end{quest}

Question \ref{quest:QSext} is unknown even in the case where $X$ is a Euclidean space although partial results exist if one increases the dimension of the target of the extension \cite{Vellis}. 

Although we do not know whether full connectedness can be removed from Theorem \ref{thm:main1} and Theorem \ref{Thm:main2}, it can not be removed from Question \ref{quest:QSext}.
To see that, let $s\in (2,3)$, let $A\subset \R^3$ be an unbounded closed $s$-regular uniformly disconnected set (e.g. a blow-up tangent of an $s$-regular self-similar Cantor set), and let $A' \subset [0,\infty)$ be an unbounded closed Ahlfors regular uniformly disconnected set (e.g. a blow-up tangent of the standard Cantor set at 0). If $X=\R^3\cup_A \R^3$ is the geodesic gluing of two copies of $\R^3$ along $A$ equipped with the Hausdorff 3-measure, then $X$ is 3-regular and it supports a $2$-Poincar\'e inequality \cite[Theorem 6.15]{HK98}. Let now $y_1,y_2 \in X\setminus A$ be two points in different copies of $\R^3$, and let $E = A' \cup \{-2,-1\}$. Then there is a quasisymmetric embedding $f:E \to X$ such that $f(A')=A$, $f(-1)=y_1$ and $f(-2)=y_2$ (a simple variation of \cite[Proposition 15.11]{DS97}) which can not be extended homeomorphically (let alone quasisymmetrically) on $\R$.

We also remark that in Euclidean spaces there exist higher dimensional bi-Lipschitz and quasisymmetric rectifiability results; in particular, if $E \subset \R^n$ is compact and uniformly disconnected, then for any $m\in\{1,\dots,n-1\}$ there exists a quasisymmetric embedding $f$ of $[0,1]^m$ into $\R^n$ that contains $E$ in its image \cite[Theorem 3.8]{BV19}. If, in addition, the Assouad dimension of $E$ is less than $m$, then $f$ can be assumed bi-Lipschitz \cite[Theorem 3.4]{BV19}. While Ahlfors regular metric spaces with Poincar\'e inequalities contain a lot of rectifiable curves, the same is not true with surfaces; for example, the Heisenberg group contains no bi-Lipschitz images of the unit square. It would be very interesting if analogous results were true for certain Carnot groups.

\subsection{Co-uniformity of uniformly disconnected sets}

The notion of local separation of a uniformly disconnected set $E$ naturally lends itself to the idea that there are lots of holes in it; in particular, the complement of $E$ contains a lot of curves. A quantification of this idea was given by MacManus who proved that if $E\subset \R^n$ is uniformly disconnected, then $\R^n \setminus E$ is a uniform set \cite[Theorem 1]{M99}. Recall that a set $U$ is uniform if for any two points in $U$ there is a curve joining them which is not too long compared to the distance of the points and does not get too close to the boundary, aside from possibly near the endpoints. Since their inception by Martio and Sarvas \cite{MartioSarvas}, uniform domains have played a crucial role in geometric function theory, especially in the extendability of Sobolev and BMO functions \cite{GO79,J80,J81}.

The study of uniform domains is prevalent in literature. In particular, uniformity has been explored through concepts like quasihyperbolicity \cite{H87,HVW17,H22}, Gromov hyperbolicity \cite{BHK01,HSX}, uniform disconnectedness \cite{M99}, bounds on topological dimension \cite{V88}, and bounds on Assouad dimension \cite{HVZ24}. We show that MacManus' result generalizes to a large class of metric measure spaces, as given by the following theorem.

\begin{Thm}\label{Thm:Uniformity}
Let $(X,d,\mu)$ be a complete, fully connected, unbounded, $Q$-regular metric measure space supporting a $Q$-Poincar\'e inequality. If $E$ is a uniformly disconnected subset of $X$, then $X \setminus E$ is uniform.
\end{Thm}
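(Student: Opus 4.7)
The plan is, given $x, y \in X \setminus E$, to construct a rectifiable curve in $X \setminus E$ joining them that satisfies the length and double cone conditions of a uniform set, with constants depending only on the data of $X$ and $E$. The curve will be built in three pieces: an ``elevator'' near $x$, a middle bridge at the coarsest scale $\ell = d(x,y)$, and a symmetric elevator near $y$.

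First I would gather three preliminary ingredients. Since $X$ is connected, unbounded, $Q$-regular and supports a $Q$-Poincar\'e inequality, the Heinonen--Koskela theorem \cite{HK98} gives that $X$ is $Q$-Loewner; coupled with Ahlfors regularity this yields \emph{annular quasiconvexity} of $X$, namely there is $A \geq 1$ so that for every $p \in X$ and $r > 0$, any two points of $B(p,2r)\setminus B(p,r)$ are joined by a curve in $B(p,Ar)\setminus B(p,r/A)$ of length at most $A r$. From the uniform disconnectedness of $E$ one obtains $\delta \in (0,1)$ such that for each $p \in E$ and each $r \in (0, \diam E)$ there is a cluster $E_{p,r}$ with $p \in E_{p,r}$, $\diam E_{p,r} \leq r$, and $\dist(E_{p,r}, E \setminus E_{p,r}) \geq \delta r$. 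Finally, $X\setminus E$ is connected: no component can be bounded since its boundary lies in the totally disconnected $E$, and Lemma~\ref{Lem:LargeComponent} together with full connectedness of $X$ then finishes the argument.

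Now fix $x, y \in X \setminus E$ and set $\ell := d(x,y)$. If $\ell \lsim \dist(x, E)$, a quasiconvex path in $X$ joining the points lies entirely inside $X \setminus E$ and trivially satisfies the uniformity conditions. Otherwise, let $p_x \in E$ be a nearest point to $x$, set $\rho_0 = \dist(x, E)$ and $\rho_k = 2^k \rho_0$, and iterate up to the smallest $k_x$ with $\rho_{k_x} \gsim \ell$: at each scale $\rho_k$ apply annular quasiconvexity of $X$ around the cluster $E_{p_x, \rho_k}$ to move $x_k$ to a point $x_{k+1}$ with $\dist(x_{k+1}, E_{p_x, \rho_{k+1}}) \asymp \rho_{k+1}$ via a curve of length $\asymp \rho_k$ staying in the corresponding annulus; the separation $\dist(E_{p_x, \rho_k}, E \setminus E_{p_x, \rho_k}) \geq \delta \rho_k$ guarantees the bypass curve avoids $E$ entirely. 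Concatenating produces an elevator from $x$ to a point $x^*$ with $\dist(x^*, E) \asymp \ell$; symmetrically build $y^*$ near $y$. Finally, join $x^*$ to $y^*$ in $X$ by a quasiconvex path of length $\lsim \ell$, perturbed off $E$ via local fullness.

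The main obstacle will be to verify the double cone condition uniformly along the assembled curve. For a point on the $k$-th annular piece near $x$, its distance to $E$ is $\asymp \rho_k$ while its arclength distance to $x$ is $\sum_{j < k}\length(\gamma_j) \asymp \rho_k$ by the geometric summation of scales, so the two are comparable and the cone condition holds on that side. On the middle bridge both the distance to $E$ and the arclength to either endpoint are of order $\ell$. Tracking constants across scale transitions, and ensuring that the perturbation of the middle segment off $E$ inherits uniform bounds from local fullness, is where most of the bookkeeping will live.
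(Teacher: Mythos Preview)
Your elevator step contains a real gap. Annular quasiconvexity (which does follow from the Loewner property) gives you a curve in a \emph{thick} point-annulus $B(p_x,A\rho_k)\setminus B(p_x,\rho_k/A)$; it says nothing about annuli ``around the cluster $E_{p_x,\rho_k}$''. The separation hypothesis $\dist(E_{p_x,\rho_k},E\setminus E_{p_x,\rho_k})\ge\delta\rho_k$ only forces points of $E\setminus E_{p_x,\rho_k}$ to lie at distance $\ge\delta\rho_k$ from the cluster --- it does \emph{not} push them outside the thick annulus. So your bypass curve at scale $\rho_k$ may well run through $E\setminus E_{p_x,\rho_k}$, and the sentence ``the separation \ldots\ guarantees the bypass curve avoids $E$ entirely'' is unjustified. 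Iterating the annular argument at finer scales to dodge those points would destroy the length bound. Note also that your elevator never invokes full connectedness, whereas the hypothesis is essential: without it, the region between the cluster and the rest of $E$ need not be connected at all.

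The paper handles exactly this difficulty by first building a quantitative defining sequence $\{U_w:w\in\W\}$ for $E$ (Proposition~\ref{Prop:DefiningSequence}) together with an exhaustion of $X$ (Lemma~\ref{Lem:Exhaustion}). Full connectedness is used there to show that each ``shell'' $\hat U_w=\ove{U_w}\setminus\bigcup_i U_{wi}$ is path connected, and (D5$'$) gives $\dist(\partial U_w,E)\gtrsim\diam U_w$. The elevator is then a concatenation of curves, one per shell, each produced by Lemma~\ref{Lem:UniformNeighborhood} inside a small neighborhood of $\hat U_w$; since that neighborhood is already quantitatively far from $E$, avoidance of $E$ is automatic and the cone estimate drops out of the geometric decay of $\diam U_w$. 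Your overall architecture (two elevators plus a bridge) matches the paper's Case~3/Case~4 structure, but the mechanism that keeps each piece off $E$ is the defining sequence and full connectedness, not annular quasiconvexity.
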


The theorem is quantitative in the sense that the constant of uniformity depends only on on the constants of Ahlfors regularity, the constants of the Poincar\'e inequality, and the constant of uniform disconnectedness. Note also that in Theorem \ref{Thm:Uniformity}, we assume a $Q$-Poincar\'e inequality which is weaker than the $(Q-1)$-Poincar\'e inequality which is assumed in Theorem \ref{thm:main1}.

\subsection{Outline of the paper}\label{sec:outline}

In Section \ref{Sec:Preliminaries}, we review some important definitions from analysis on metric spaces and present some preliminary results.

The common ingredient in the proof of all three main theorems is the construction of a ``thickening'' of compact totally disconnected sets in Section \ref{Sec:ADecompositionOfUniformlyDisconnectedSets}. This thickening is a replacement for the cubic meshes used in MacManus' \cite{M99} proof to give a natural decomposition of uniformly disconnected sets in Euclidean space. In particular, Proposition \ref{Prop:DefiningSequence} provides us with a defining sequence $\{D_{n}\}_{n\in\N}$ of open sets in $X$ such that 
\begin{enumerate}
\item for all $n\in\N$, $D_n = D_{n,1}\sqcup \cdots\sqcup D_{n,k_n}$ where $D_{n,1},\dots,D_{n,k_n}$ are mutually disjoint full domains with diameters at most a fixed multiple of $2^{-n}$,
\item for all $n\in\N$, each $D_{n,i}$ contains some $D_{n+1,j}$, and each $D_{n+1,i}$ is contained in some $D_{n,j}$
\item for all $n\in\N$, $\dist(D_{n+1},\partial D_n)>0$, and
\item $E=\bigcap_{n\in\N}\bigcup_{i=1}^{k_n} \overline{D_{n,i}}$.
\end{enumerate}
Moreover, in the case that $E$ is uniformly disconnected, we have quantitative control on the size of $k_n$, diameters of $D_n$, and distances $\dist(D_{n + 1},\dee D_n)$.
Using this decomposition we prove Theorem \ref{Thm:Uniformity} in Section \ref{Sec:Co-uniformity}.

In Section \ref{Sec:Quasisymmetric1}, we set up notation and some preliminary lemmas for the proof of Theorems \ref{thm:main1} and \ref{Thm:main2}. The crux of the proof of Theorem \ref{thm:main1} and Theorem \ref{Thm:main2} is the construction of bi-Lipschitz arcs in each $D_{n,i}\setminus D_{n+1}$ with the first arc going from the boundary of $D_{n,i}$ to the boundary of the first component $D_{n+1,i_1}$ of $D_{n+1}$ inside $D_n$, the second curve going from the boundary of $D_{n+1,i_1}$ to the boundary of the second component $D_{n+1,i_2}$ of $D_{n+1}$ inside $D_n$ and so on, so forth. The last curve, goes from the boundary of the last component of $D_{n+1}$ inside $D_n$ to the boundary of $D_n$; see Figure \ref{fig:curves}. We take extra care so that these curves do not intersect with one another or with curves in other domains $D_{n,j}$. This construction relies heavily on the Poincar\'e inequality and the Ahlfors regularity of the space and is given in Section \ref{Sec:Quasisymmetric2}.

In Section \ref{sec:modif}, we perform modifications around the endpoints of the curves previously defined to ensure that the concatenation is locally bi-Lipschitz, (and globally quasisymmetric when $E$ is uniformly disconnected) and in Section \ref{sec:proof} we give the proofs of Theorem \ref{thm:main1} and Theorem \ref{Thm:main2}. 


\section{Preliminaries} \label{Sec:Preliminaries}

Here and for the rest of the paper, when we write $C(c_1,\ldots,c_n)$, we mean that the constant $C$ depends only on the constants $c_1,\ldots,c_n$.
When we write $x \lsim_{c_1,\ldots,c_n} y$, we mean there is a constant $C(c_1,\ldots,c_n) > 0$ such that $x \le Cy$.
If the constant $C > 0$ is universal or the constants are understood, then we write $x \lsim y$.

If $X$ is a metric space, $x\in X$, $r>0$, and $E,F\subset X$, then we denote the distance of $x$ to $E$ by $\dist(x,E)$, the distance of $E$ to $F$ by $\dist(E,F)$, the diameter of $E$ by $\diam(E)$, and the open $r$-neighborhood of the set $E$ by
\[ N(E,r) = \bigcup_{y \in E} B(y,r) = \{z \in X : \dist(z,E) < r\}. \]
We also denote the closed $r$-neighborhood of the set $E$ by $\ove{N}(E,r)$.
If $B = B(x,r)$ is a ball in $X$ and $c > 0$, we denote by $cB$ the ball $B(x,cr)$.
Provided $E$ and $F$ are nondegenerate continua (compact and connected sets with more than one point), we denote the \emph{relative distance} of $E$ and $F$ by
\[ \Delta(E,F) = \frac{\dist(E,F)}{\min\{\diam(E),\diam(F)\}}. \]

Given a set $A \subset \R$ and a map $f:A \to X$, we denote by $\Im(f)$ the image of $f$. If $A$ is a closed interval and $f$ is continuous (that is, $f$ is a curve), then we conflate $f$ with its image $\Im(f)$ as a subset of $X$. 

Define the \emph{length} of a curve $\gamma : [a,b] \to X$ by
\[ \length(\gamma) = \sup\left\{\sum_{k = 1}^n d(\gamma(t_k),\gamma(t_{k-1})) : a < t_0 < \cdots < t_n < b\right\}. \]

\subsection{Words}
An \emph{alphabet} is either a set of the form $A=\{1,\dots,n\}$ (in which case we say that we have a finite alphabet) or $A=\N$ (in which case we say that we have an infinite alphabet).
Given an alphabet $A$, we denote a typical element (a word) of $A^k$ by $a_1\cdots a_k$ instead of $(a_1,\ldots,a_k)$.
The number $k$ is the \emph{length} of any word $a \in A^k$, and we write $|a|$ to denote this value.
We use the convention that $A^0 = \{\eps\}$, where $\eps$ denotes the \emph{empty word}, which satisfies $|\eps| = 0$.
We also denote $A^* = \bigcup_{k\geq 0} A^k$, the collection of all words of finite length.
A \emph{dictionary} is any subset of $A^*$.
Lastly, given $w\in A^*\setminus\{\varepsilon\}$, denote by $w^{\uparrow}$ the unique word $u\in A^*$ for which there exists $i\in A$ such that $w = ui$ (sometimes called the \emph{parent} of $w$).

\subsection{Homogeneously totally bounded spaces and Ahlfors regularity} \label{Sec:Regularity}

Inspired by \cite[\textsection2.7]{TuVa}, we say that a metric space $(X,d)$ is $(C,\alpha)$-\emph{homogeneously totally bounded} (abbv. $(C,\alpha)$-HTB) for some $C,\alpha>0$ if for each $x\in X$, and each $0<r<R$, the ball $B(x,R)$ can be covered by at most $C(R/r)^{\alpha}$ many balls of radii $r$. 
Note that if $X$ is $(C,\alpha)$-HTB, then $X$ is $(C,\beta)$-HTB for all $\beta \ge \alpha$. If we do not care for the constant $C$, we say that $X$ is $\alpha$-HTB. The image of a $(C,\alpha)$-HTB space under a bi-Lipschitz map is $(C',\alpha)$-HTB quantitatively.

A metric measure space $(X,d,\mu)$ is called \emph{Ahlfors $(C_0,Q)$-regular} (or simply \emph{$(C_0,Q)$-regular}) for constants $C_0 \ge 1$ and $Q \ge 0$ if for each point $x\in X$ and $r \in (0,\diam{X})$,
\begin{equation}\label{eq:regular}
C_0^{-1}r^Q \le \mu(B(x,r)) \le C_0r^Q.
\end{equation}
If the right inequality in \eqref{eq:regular} holds for all $x\in X$ and $r \in (0,\diam{X})$, then we say that $(X,d,\mu)$ is \emph{upper $(C_0,Q)$-regular}. Note that if $(X,d,\mu)$ is $(C_0,Q)$-regular, then $(X,d,\mathcal{H}^Q)$ is $(C_0',Q)$-regular for some $C_0'$ depending only on $C_0$ \cite[\textsection1.4.3]{MT10}. Here and for the rest of the paper, $\mathcal{H}^Q$ denotes the $Q$-dimensional Hausdorff measure.
It is not hard to see that if $(X,d,\mu)$ is $Q$-regular, then $(X,d)$ is $(C,Q)$-HTB for some $C>0$ depending only on $C_0$ and $Q$.

\subsection{Mappings} \label{Sec:RegMap}

Let $(X,d)$ and $(Y,d')$ be two metric spaces.
Given $L \ge 1$ and $\lambda > 0$, a map $f:X \to Y$ is said to be an \emph{$(L,\lambda)$-quasisimilarity} if for all $x_1,x_2 \in X$,
\[ \frac{\lambda}{L}d(x_1,x_2) \le d'(f(x_1),f(x_2)) \le L\lambda d(x_1,x_2). \]
If $\lambda = 1$, then we say that $f$ is \emph{$L$-bi-Lipschitz}.
If $L = 1$, then $f$ is a \emph{$\lambda$-similarity}.
If $L = \lambda = 1$, then $f$ is an \emph{isometry}.

Given a homeomorphism $\eta:[0,\infty) \to [0,\infty)$, a map $f:X \to Y$ is \emph{$\eta$-quasisymmetric} if for all $x,a,b \in X$ with $x \ne b$,
\begin{equation}\label{eq:QS}
\frac{d'(f(x),f(a))}{d'(f(x),f(b))} \le \eta\left(\frac{d(x,a)}{d(x,b)}\right).
\end{equation}
Since their inception by Beurling and Ahlfors \cite{BA56}, quasisymmetric maps have played an important role in the study of analysis on metric spaces. While bi-Lipschitz maps roughly preserve absolute distances, quasisymmetric maps roughly preserve relative distances. 

It is well-known that under certain conditions on $X$ and $Y$, quasisymmetry is equivalent to a weaker condition, aptly named \emph{weak quasisymmetry}. 

\begin{Lem}[{\cite[Theorem 10.19]{H01}}]
Let $X$ be a connected, HTB metric space and let $Y$ be a HTB metric space. An embedding $f:X \to Y$ is $\eta$-quasisymmetric if and only if there exists $H\geq 1$ such that for any triple $x,y,z\in X$ with $d(x,y)\leq d(x,z)$ we have $d'(f(x),f(y))\leq H d'(f(x),f(z))$. The function $\eta$, the constant $H$, and the HTB constants of $X$ and $Y$ are quantitatively related.
\end{Lem}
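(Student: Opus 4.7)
The forward direction is immediate: if $f$ is $\eta$-quasisymmetric, then \eqref{eq:QS} applied to any triple $x, y, z$ with $d(x,y) \le d(x,z)$ yields $d'(f(x),f(y))/d'(f(x),f(z)) \le \eta(1)$, so $f$ is $H$-weakly quasisymmetric with $H = \eta(1)$.

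For the converse, assume $f$ is $H$-weakly quasisymmetric and define the distortion function
\[ \theta(t) = \sup\left\{ \frac{d'(f(x),f(a))}{d'(f(x),f(b))} : x, a, b \in X,\, x \ne b,\, d(x,a) \le t\, d(x,b) \right\}, \qquad t > 0. \]
Weak quasisymmetry gives $\theta(t) \le H$ for $t \le 1$. The lemma reduces to showing (a) $\theta(t) < \infty$ for every $t > 0$ and (b) $\theta(t) \to 0$ as $t \to 0^+$; any homeomorphism $\eta : [0,\infty) \to [0,\infty)$ with $\eta(0) = 0$ and $\eta \ge \theta$ then serves as the quasisymmetric control. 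The finiteness (a) for $t > 1$ proceeds by a chaining argument that combines connectedness of $X$ (via continuity of $w \mapsto d(x,w)$, whose image is an interval containing $[0,\diam X]$) with HTB of $X$ and $Y$: one interpolates between $a$ and $b$ by intermediate points whose distances to $x$ form a dyadic progression, and iterates weak quasisymmetry while using the HTB packing/covering estimates to propagate explicit bounds on the image distances.

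The principal obstacle is (b), where HTB of $Y$ plays the decisive role. I would argue by contradiction. Suppose there exist $c_0 > 0$ and triples $(x_n, a_n, b_n)$ with $d(x_n, a_n)/d(x_n, b_n) \to 0$ yet $d'(f(x_n), f(a_n))/d'(f(x_n), f(b_n)) \ge c_0$. After scaling $X$ so that $d(x_n, b_n) = 1$, connectedness of $X$ supplies points $q_0, q_1, \ldots, q_{K_n} \in X$ with $d(x_n, q_k) = 2^k\, d(x_n, a_n)$ for $K_n = \lfloor \log_2(1/d(x_n, a_n)) \rfloor \to \infty$, each lying in $\overline{B}(x_n, 1)$. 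Weak quasisymmetry applied to the triple $(x_n, q_k, b_n)$ places each $f(q_k)$ inside $\overline{B}(f(x_n), H\, d'(f(x_n), f(b_n)))$, while applied to $(x_n, a_n, q_k)$ (using $d(x_n, a_n) \le d(x_n, q_k)$) it yields the lower bound $d'(f(x_n), f(q_k)) \ge c_0\, d'(f(x_n), f(b_n))/H$. For indices $i < j$ the triangle inequality gives $d(q_i, q_j) \ge (2^j - 2^i)\, d(x_n, a_n) \ge d(x_n, q_i)$, so a second application of weak quasisymmetry, now centered at $q_i$, produces
\[ d'(f(q_i), f(q_j)) \ge d'(f(q_i), f(x_n))/H \ge c_0\, d'(f(x_n), f(b_n))/H^2. \]
Hence $\{f(q_0), \ldots, f(q_{K_n})\}$ is a $(c_0/H^2)\, d'(f(x_n), f(b_n))$-separated subset of $\overline{B}(f(x_n), H\, d'(f(x_n), f(b_n)))$ of cardinality tending to infinity, contradicting the HTB property of $Y$ (which caps such separated sets at $C(H^3/c_0)^{\alpha_Y}$ points). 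All constants propagate explicitly, giving the quantitative linkage between $\eta$, $H$, and the HTB data of $X$ and $Y$.
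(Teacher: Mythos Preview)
The paper does not give a proof of this lemma; it is simply quoted from Heinonen's \emph{Lectures on Analysis on Metric Spaces}. Your argument for part (b) (the limit $\theta(t)\to 0$) is correct and is essentially the standard proof: the dyadic sequence $q_k$ together with weak quasisymmetry produces a separated net in a ball of $Y$ whose cardinality is forced to be bounded by the HTB property, yielding the contradiction.

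Your sketch of (a), however, has a real gap. Placing points $q_k$ at distances $2^k d(x,b)$ from $x$ and ``iterating weak quasisymmetry'' does not yield an upper bound on $d'(f(x),f(a))/d'(f(x),f(b))$: the inequality $d(x,q_{k-1})\le d(x,q_k)$ gives $d'(f(x),f(q_{k-1}))\le H\,d'(f(x),f(q_k))$, which bounds $d'(f(x),f(b))$ from \emph{above} in terms of $d'(f(x),f(q_m))$, the wrong direction. Repackaging this as a separated-net argument in $Y$ also fails, since the number of dyadic levels between $d(x,b)$ and $d(x,a)$ is only about $\log_2 t$, which is finite and gives no contradiction. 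The standard route (and the one in Heinonen's book) is to first prove (b), use it to deduce that $f^{-1}:f(X)\to X$ is itself weakly quasisymmetric (choosing $H'$ with $\theta(1/H')<1$), and then apply the argument of (b) to $f^{-1}$---here the HTB hypothesis on $X$ is what is used. This inversion trick, rather than a direct chain, is the missing idea in your treatment of (a).
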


See \cite{H01} for a detailed exposition on the theory of quasisymmetric maps.

\subsection{Porosity and uniform disconnectedness}\label{Subsec:UniformlyDisconnectedSpaces}
We say that a set $E\subset X$ is \emph{$p$-porous} for some $p\geq 1$ if for all $x\in E$ and all $r\in (0,\diam{X})$ there exists a ball $B(y,p^{-1}r) \subset B(x,r)\setminus E$. The following lemma gives a classification of porous sets in Ahlfors regular spaces in terms of their dimension.

\begin{Lem}[{\cite[Lemma 3.12]{BHR01}}]\label{lem:BHR}
Let $(X,d,\mathcal{H}^Q)$ be a $(C_0,Q)$-regular metric space. A set $E\subset X$ is $p$-porous if and only if it is $(C,\alpha)$-HTB for some $C > 0$ and $\alpha \in (0,Q)$. Constants $C$, $Q$, $C_0$, and $\alpha$ are quantitatively related.
\end{Lem}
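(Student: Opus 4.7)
My plan is to prove the two implications separately: a direct volume-comparison argument gives the reverse direction (HTB implies porous), while the forward direction (porous implies HTB with $\alpha < Q$) requires an iterated porosity argument carried out on a hierarchical cube decomposition of $X$.

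For the \emph{reverse} direction, suppose $E$ is $(C,\alpha)$-HTB with $\alpha \in (0,Q)$. Fix $x \in E$ and $r \in (0,\diam X)$; I will show $B(x,r) \setminus E$ contains a ball of radius $r/p$ for $p = p(C,C_0,Q,\alpha)$ to be chosen. Cover $E \cap B(x,r)$ by at most $N = C(2p)^{\alpha}$ balls of radius $r/(2p)$ centered on $E$; then the $r/p$-neighborhood of $E \cap B(x,r)$ lies in the union of the $3r/(2p)$-enlargements of these balls, which has total measure $\lsim_{C,C_0,Q} p^{\alpha - Q} r^Q$ by Ahlfors regularity. Since $\alpha < Q$, this is strictly less than $\mu(B(x, r - r/p)) \gsim_{C_0} r^Q$ for $p$ large enough (depending on $C,C_0,Q,\alpha$). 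Any $y$ in the positive-measure complement then satisfies $\dist(y,E) \geq r/p$ (using $E \cap B(y,r/p) \subset E \cap B(x,r)$ via the triangle inequality), so $B(y,r/p) \subset B(x,r) \setminus E$.

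For the \emph{forward} direction, porosity immediately yields the one-scale density estimate $\mu(E \cap B(x,R)) \leq (1 - C_0^{-2} p^{-Q}) \mu(B(x,R))$, but obtaining an Assouad-type bound with exponent strictly below $Q$ requires iteration. I would invoke a dyadic-cube decomposition of $X$ (which exists in every Ahlfors regular metric measure space, e.g.\ by Christ's or Hyt\"onen--Kairema's construction) with parameter $K = K(p,Q,C_0)$ taken large enough that $K$ exceeds a suitable multiple of $p$. Cubes at level $n$ then have diameter comparable to $K^{-n}$, and each parent cube has $\sim K^Q$ children. Let $T_n$ denote the number of level-$n$ cubes intersecting $E$; for each such cube, $p$-porosity provides a void of radius $K^{-n}/p$ disjoint from $E$ which, for $K$ large enough, fully contains at least one child cube. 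Hence at most $(K^Q - 1)$ children survive per parent, so $T_{n+1} \leq (K^Q - 1)T_n$ and iterating gives $T_n \leq (K^Q - 1)^n T_0$. Covering $E \cap B(x,R)$ by the surviving level-$n$ cubes yields $N(E \cap B(x,R), R K^{-n}) \leq C(K^n)^{\alpha}$ with $\alpha = \log_K(K^Q - 1) \in (0,Q)$, proving the HTB bound.

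The principal obstacle is that a single porosity void kills only one child per cube, so the per-scale multiplier $K^Q - 1$ only narrowly beats $K^Q$, and even this marginal gain is delicate. It is therefore essential to run the iteration on a dyadic-cube decomposition, where each parent has exactly $\sim K^Q$ children; a naive ball-covering argument would introduce a prefactor $C_{\mathrm{HTB}}(C_0,Q) > 1$ at each step that compounds across scales and completely destroys the dimension drop. All constants appearing above ($p$, $K$, $\alpha$, and the final HTB constant) are explicit in each other together with $Q$ and $C_0$, giving the quantitative relationship asserted in the lemma.
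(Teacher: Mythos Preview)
The paper does not give its own proof of this lemma; it is simply cited from \cite{BHR01}. So there is no in-paper argument to compare against, and I will assess your proposal on its own merits.

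Your reverse direction (HTB with $\alpha<Q$ implies porous) is correct and is the standard volume-comparison argument.

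Your forward direction has the right architecture---iterate porosity along a Christ/Hyt\"onen--Kairema cube decomposition---but the specific counting claim does not close. You assert that each parent has ``$\sim K^Q$'' children and that killing one child yields $T_{n+1}\le (K^Q-1)T_n$, giving $\alpha=\log_K(K^Q-1)<Q$. In an abstract $(C_0,Q)$-regular space the maximal child count is only $M\le C'K^Q$ with $C'=C'(C_0,Q)$ possibly strictly larger than $1$; the correct recursion is $T_{n+1}\le (M-1)T_n$, and $\log_K(M-1)$ can exceed $Q$ once $C'>1$. In your final paragraph you correctly identify that a compounding prefactor would destroy the dimension drop, but the cube decomposition does \emph{not} eliminate this prefactor in the child \emph{count}---only in the child \emph{measure}.

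The fix is to iterate measure rather than cardinality. For each level-$k$ cube $Q$ meeting $E$, the void you construct (of radius $\gtrsim K^{-k}/p$ inside $Q$) contains a full child cube $Q'$, and Ahlfors regularity gives $\mu(Q')\ge c\,\mu(Q)$ for a fixed $c=c(C_0,Q,p)\in(0,1)$. Hence $\mu\bigl(\bigcup\{Q'\subset Q: Q'\cap E\ne\emptyset\}\bigr)\le (1-c)\mu(Q)$, which iterates cleanly to $(1-c)^m\mu(Q_0)$ because measure is additive over the children. Converting back to a cube count at the last step gives $T_m\lesssim \bigl((1-c)K^Q\bigr)^m$, and now $\alpha=Q+\log_K(1-c)<Q$ as required. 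This is the argument in \cite{BHR01}.
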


In light of the discussion in \textsection\ref{Sec:Regularity}, the measure $\mathcal{H}^Q$ in Lemma \ref{lem:BHR} can be replaced by any other $Q$-regular measure. 

We say that a metric space $(X,d)$ is \emph{$c$-uniformly disconnected} for some $c \ge 1$ if for all $x \in X$ and $r \in (0,\diam X)$, there exists a set $X_{x,r} \subset X$ such that $x \in X_{x,r}$, $\diam{X_{x,r}} \le r$, and $\dist(X_{x,r},X \setminus X_{x,r}) \ge c^{-1}r$.

We say that a collection $\{x_0,\dots,x_n\}$ of points in a metric space is a \emph{$\delta$-chain} if $d(x_{i - 1},x_i) \le \delta d(x_0,x_n)$ for each $i \in \{1,\dots,n\}$. We say that the chain \emph{joins} $x_0$ and $x_n$. The following lemma relates the definition of uniform disconnectedness to the nonexistence of $\delta$-chains.

\begin{Lem}[{\cite[Exercise 14.26]{H01}}]\label{Lem:EquivalentDefinition}
A metric space $X$ is $c$-uniformly disconnected for some $c \ge 1$ if and only if there exists $\delta \in (0,1)$ such that for any two points $x,y \in X$, there is no $\delta$-chain joining $x$ and $y$. Here, $\delta$ and $c$ depend only on each other.
\end{Lem}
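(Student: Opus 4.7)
The plan is to prove both directions by a direct conversion between the two definitions with explicit constants; the only mild subtlety lies in choosing the chain step size correctly in the backward direction.

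For the forward direction, I would assume $X$ is $c$-uniformly disconnected and fix distinct $x, y \in X$. Since $d(x,y)/2 < \diam X$, I apply the definition with $r = d(x,y)/2$ to obtain a set $X_{x,r}$ containing $x$, of diameter at most $d(x,y)/2$, which therefore does not contain $y$, and satisfying $\dist(X_{x,r}, X \setminus X_{x,r}) \ge d(x,y)/(2c)$. Any chain $x = x_0, x_1, \ldots, x_n = y$ must contain some consecutive pair crossing from $X_{x,r}$ into its complement, forcing $d(x_{i-1}, x_i) \ge d(x,y)/(2c)$ for that pair. Thus no $\delta$-chain joining $x$ to $y$ exists whenever $\delta < 1/(2c)$.

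For the backward direction, I assume the nonexistence of $\delta$-chains, fix $x \in X$ and $r \in (0, \diam X)$, and define $X_{x,r}$ as the set of all $y \in X$ reachable from $x$ by a finite sequence $x = z_0, \ldots, z_n = y$ with each gap $d(z_{j-1}, z_j) \le \delta r$. The separation $\dist(X_{x,r}, X \setminus X_{x,r}) \ge \delta r$ follows immediately from the maximality of $X_{x,r}$, since any point within $\delta r$ of $X_{x,r}$ could be appended to a witnessing chain. For the diameter bound, suppose for contradiction that $p, q \in X_{x,r}$ satisfy $d(p,q) > r$. Concatenating a chain from $p$ to $x$ with one from $x$ to $q$ yields a sequence from $p$ to $q$ whose every gap is at most $\delta r < \delta d(p,q)$, hence a $\delta$-chain joining $p$ and $q$, contradicting the hypothesis. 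Therefore $\diam X_{x,r} \le r$ (and so $X \setminus X_{x,r} \ne \emptyset$), showing that $X$ is $(1/\delta)$-uniformly disconnected.

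The main point to get right is the calibration of the step size $\alpha$ in the chains defining $X_{x,r}$: it must be small enough that the resulting chain automatically satisfies the $\delta$-chain inequality whenever it joins two points farther than $r$ apart, while simultaneously supplying the required $c^{-1}r$ separation. The choice $\alpha = \delta$ does both at once. The resulting dependencies $c \le 1/\delta$ and $\delta \ge 1/(2c)$ establish the quantitative equivalence claimed in the lemma.
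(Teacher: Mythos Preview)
Your proof is correct. Note that the paper does not actually give a proof of this lemma; it simply cites it as \cite[Exercise 14.26]{H01} and moves on. So there is no ``paper's own proof'' to compare against, and your argument stands on its own as a clean solution to that exercise with the explicit constants $\delta = 1/(2c)$ in one direction and $c = 1/\delta$ in the other.
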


\subsection{Poincar\'e inequality and modulus of curve families}\label{Sec:Poincare}\label{Sec:ModCap}

Let $(X,d)$ be a metric space. We say that a function $g:X \to [0,\infty]$ is an \emph{upper gradient} of a locally Lipschitz function $u:X \to \R$ if for all points $x$ and $y$ of $X$ and all rectifiable curves $\gamma \subset X$ joining $x$ with $y$,
\[ |u(x) - u(y)| \le \int_{\gamma} g\,\d s. \]

A metric measure space $(X,d,\mu)$ is said to support a \emph{$p$-Poincar\'e inequality} if there exist constants $C,\lambda>1$ such that for every locally Lipschitz function $u:X \to \R$, every upper gradient $g:X \to [0,\infty]$ of $u$, every point $x\in X$, and every $r>0$,
\[ \fint_{B(x,r)} |u - u_{B(x,r)}|\, \d\mu \le C\diam(B(x,\lambda r))\left(\fint_{B(x,\lambda r)} g^p\, \d\mu\right)^{1/p}, \]
where, for an integrable function $f:X \to \R$,
\[ f_{B(x,r)} = \fint_{B(x,r)} f\, \d\mu = \frac{1}{\mu(B(x,r))}\int_{B(x,r)} f\, \d\mu. \]
It follows from H\"older's inequality that if $p < q$ and $(X,d,\mu)$ satisfies a $p$-Poincar\'e inequality, then it also satisfies a $q$-Poincar\'e inequality. Moreover, if $(X,d,\mu)$ is geodesic, then one can take $\lambda = 1$ \cite[Remark 9.1.19]{HKST15}. If we want to emphasize the constants, we say that $(X,d,\mu)$ satisfies an $(p,C)$-Poincar\'e inequality.


Let $(X,d,\mu)$ be a metric measure space and let $\Gamma$ be a collection of rectifiable curves in $X$.
A function $f:X \to [0,\infty)$ is said to be \emph{admissible} for $\Gamma$ if
\[ \int_{\gamma} f\,\d s \ge 1 \]
for each $\gamma \in \Gamma$. We define the \emph{$p$-modulus} of $\Gamma$ by
\[ \Mod_p(\Gamma) = \inf \left\{ \int_X f^p\,\d\mu : \text{ $f:X \to [0,\infty)$ is admissible for $\Gamma$}\right\}. \]

The following proposition is useful for calculating upper bounds on the modulus of families of curves in upper Ahlfors regular spaces.
\begin{Lem}[{\cite[Lemma 7.18]{H01}}]\label{Lem:AnnulusModulus}
Suppose that $(X,d,\mu)$ is a metric measure space that is upper $(C_0,Q)$-regular. There exists $C_1(C_0,Q)\geq 1$ such that for $x \in X$ and $2r < R$,
\[ \Mod_Q(\Gamma) \le C_1\left(\log\left(\frac{R}{r}\right)\right)^{1 - Q}, \]
where $\Gamma$ is the family of curves joining $\overline B(x,r)$ to $X \setminus B(x,R)$.
\end{Lem}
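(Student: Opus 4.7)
The plan is to produce an explicit admissible function for $\Gamma$ using dyadic annuli centered at $x$ and then bound its $Q$-energy by a direct computation using only the upper regularity hypothesis. This is the classical test-function argument for annulus modulus, and the upper bound is exactly what is available without a lower regularity assumption.

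Let $N$ be the largest positive integer with $2^{N}r\le R$; note $N\ge 1$ since $2r<R$, and $N$ is comparable to $\log(R/r)$. Partition the spherical shell $B(x,R)\setminus\ove{B}(x,r)$ into dyadic annuli
\[ A_k = B(x,2^k r)\setminus B(x,2^{k-1}r), \qquad k=1,\ldots,N, \]
and define
\[ f = \sum_{k=1}^{N} \frac{1}{N\,2^{k-1}r}\,\1_{A_k}. \]
To verify admissibility, fix a rectifiable $\gamma\in\Gamma$ parametrized by arc length. For each $k\in\{1,\ldots,N\}$, since $\gamma$ starts in $\ove{B}(x,r)\subset \ove{B}(x,2^{k-1}r)$ and ends outside $B(x,R)\supset B(x,2^k r)$, a continuity/intermediate-value argument applied to $t\mapsto d(x,\gamma(t))$ yields times $t_0<t_1$ with $d(x,\gamma(t_0))=2^{k-1}r$, $d(x,\gamma(t_1))=2^k r$, and $\gamma([t_0,t_1])\subset \ove{A_k}$. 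The $1$-Lipschitz property of the distance function then gives $\length(\gamma|_{[t_0,t_1]})\ge 2^{k-1}r$, so
\[ \int_{\gamma} f\,\d s \;\ge\; \sum_{k=1}^{N} \frac{1}{N\,2^{k-1}r}\cdot 2^{k-1}r \;=\; 1. \]

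For the energy estimate, upper $(C_0,Q)$-regularity gives $\mu(A_k)\le \mu(B(x,2^k r))\le C_0(2^k r)^Q$, whence
\[ \int_X f^Q\,\d\mu \;=\; \sum_{k=1}^{N} \frac{\mu(A_k)}{(N\,2^{k-1}r)^Q} \;\le\; \sum_{k=1}^{N} \frac{C_0\,2^Q}{N^Q} \;=\; \frac{2^Q C_0}{N^{Q-1}}. \]
Since $N\ge \tfrac{1}{2}\log_2(R/r)\gsim \log(R/r)$, taking infimum over admissible functions yields $\Mod_Q(\Gamma)\le C_1(\log(R/r))^{1-Q}$ with $C_1=C_1(C_0,Q)$.

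There is no serious obstacle: the construction is completely standard and the only non-routine ingredient is the admissibility check, which is forced by the elementary fact that a curve joining two concentric balls must cross each intermediate annulus with length at least its radial width. Crucially, only the upper half of Ahlfors regularity (namely, upper mass bounds on balls) is needed, consistent with the statement of the lemma.
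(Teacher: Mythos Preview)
The paper does not prove this lemma; it is simply quoted from \cite[Lemma~7.18]{H01}. Your argument is correct and is precisely the standard test-function computation that appears in Heinonen's book (there one typically uses the continuous weight $\rho(y)=\big(d(x,y)\log(R/r)\big)^{-1}$ on the shell and then estimates its $L^Q$-norm via the same dyadic decomposition you wrote down, which is equivalent to your step-function version). One cosmetic point: since your $A_k$ are half-open, the subarc you extract lies in $\overline{A_k}$ rather than $A_k$; this is harmless for the line integral (the possible overlap on spheres only helps the lower bound, or alternatively take $A_k$ closed and absorb the factor of $2$ from the overlap into $C_1$), but it is worth saying explicitly.
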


A metric space $(X,d)$ is said to be \emph{$Q$-Loewner} if there exists a decreasing homeomorphism $\Phi:(0,\infty) \to (0,\infty)$ such that $\Mod_Q(E,F) \ge \Phi(\Delta(E,F))$ for each pair of nondegenerate continua $E$ and $F$ in $X$. Ahlfors regularity and a Poincar\'e inequality imply the Loewner property. 

\begin{Lem}[{\cite[Theorem 5.7]{HK98}}]\label{Lem:Loewner}
Let $(X,d,\mu)$ be a complete, $(C_0,Q)$-regular metric measure space supporting a $(Q,C)$-Poincar\'e inequality. Then $X$ is $Q$-Loewner and the Loewner function $\Phi$ depends only on $C_0$, $Q$, and $C$.
\end{Lem}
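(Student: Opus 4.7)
The plan is to follow the strategy of Heinonen and Koskela: given an arbitrary admissible function $f$ for $\Gamma=\Gamma(E,F)$, build a ``potential'' $u$ that separates $E$ from $F$, and then invoke the Poincar\'e inequality to force $\int_X f^Q \, d\mu$ to be bounded below by a quantity depending only on $\Delta(E,F)$. Completeness together with $Q$-regularity yields properness, and a Poincar\'e inequality on a complete doubling space implies quasiconvexity, so $X$ is rectifiably connected and the potential below is defined unambiguously.

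Given $f\geq 0$ Borel with $\int_\sigma f\,ds \geq 1$ for every rectifiable $\sigma$ joining $E$ to $F$, set
\[ u(x) := \min\Bigl\{1,\; \inf_\sigma \int_\sigma f\,ds\Bigr\}, \]
where the infimum runs over rectifiable curves $\sigma$ from $x$ to $E$ (with $u(x)=1$ if none exists). Then $u\equiv 0$ on $E$, $u\equiv 1$ on $F$, and after the standard approximation of replacing $f$ by $f+\varepsilon$ and letting $\varepsilon \to 0^+$, $f$ serves as an upper gradient of $u$. Hence the $Q$-Poincar\'e inequality controls the oscillation of $u$ on balls by $\|f\|_{L^Q}$.

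In the regime where $t:=\Delta(E,F)$ is small (say $t\leq 1$), I would choose a ball $B$ of radius comparable to $\diam(E)+\diam(F)+\dist(E,F)$ that contains a $\mu$-definite portion of both $E$ and $F$; such a ball exists by $Q$-regularity. Since $u\equiv 0$ on $E\cap B$ and $u\equiv 1$ on $F\cap B$, the mean oscillation $\fint_B |u-u_B|\,d\mu$ is bounded below by a definite constant. Applying Poincar\'e on $B$ and then invoking $\mu(\lambda B)\sim \diam(\lambda B)^Q$ from Ahlfors regularity yields
\[ \int_X f^Q\,d\mu \;\geq\; \int_{\lambda B} f^Q\,d\mu \;\gsim_{C_0,Q,C} 1, \]
which gives the Loewner estimate for small $t$ after taking the infimum over admissible $f$.

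For large $t$, a single ball no longer captures both $E$ and $F$, and telescoping is necessary: connect base points of $E$ and $F$ by a quasigeodesic, cover it by a chain of balls $B_1,\ldots,B_N$, apply the Poincar\'e inequality on each $B_i$ to control a piece of the oscillation of $u$, and sum the contributions using H\"older's inequality. The main obstacle is choosing the radii correctly. A chain of equal-radius balls forces $N\sim t$, and H\"older produces only the lossy bound $\Phi(t)\gsim t^{1-Q}$; to recover the sharp Loewner decay $\Phi(t)\sim (\log t)^{1-Q}$ that is consistent with the upper bound in Lemma~\ref{Lem:AnnulusModulus}, one must instead choose balls whose radii form a geometric progression between $\min\{\diam(E),\diam(F)\}$ and $\dist(E,F)$, so that $N\sim \log t$. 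Executing this ``exponential chain'' argument with the correct absorption of $Q$-th powers is the core technical step of the proof.
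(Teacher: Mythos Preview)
The paper does not supply a proof of this lemma; it is quoted directly from \cite[Theorem~5.7]{HK98} and used as a black box. Your outline is the Heinonen--Koskela argument itself, and the strategy is correct: build the potential $u$, use the $f\mapsto f+\varepsilon$ trick to make $f$ a genuine upper gradient of a locally Lipschitz function (this is where quasiconvexity, which you correctly note follows from completeness, doubling, and Poincar\'e, is used), then run a single-ball Poincar\'e estimate for small $t$ and a telescoping chain for large $t$.

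One caution on the last paragraph: the paper's formulation of the Poincar\'e inequality is only for locally Lipschitz $u$, so you do need the $\varepsilon$-perturbation and quasiconvexity to ensure $u$ is locally Lipschitz before invoking it; you mention this but it is the step most likely to be glossed over. Also, the sharp $(\log t)^{1-Q}$ decay you aim for in the large-$t$ regime is not actually needed anywhere in the present paper---only a positive lower bound $\Phi(t)>0$ for each $t$ is used (in Lemma~\ref{Lem:LargeComponent} and Lemma~\ref{Lem:Entry} the Loewner bound is always compared against the \emph{upper} modulus bound from Lemma~\ref{Lem:AnnulusModulus}). So the cruder $t^{1-Q}$ bound from an equal-radius chain would already suffice for the applications here, and you could spare yourself the geometric-radius chain if your goal is only to supply what the paper needs.
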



\subsection{Uniformity} \label{Subsec:Uniformity}

A subset $U$ of a metric space $(X,d)$ is \emph{$c$-uniform} for some $c \ge 1$ if for all $x,y \in \ove{U}$, there exists a curve $\gamma:[0,1] \to \ove{U}$ with $\gamma(0) = x$, $\gamma(1) = y$,
\begin{enumerate}
\item $\length(\gamma) \le cd(x,y)$ and
\item $\dist(\gamma(t),X \setminus U) \ge c^{-1}\dist(\gamma(t),\{x,y\})$ for all $t \in [0,1]$.
\end{enumerate}
If for each pair of points $x,y \in U$, there is a curve $\gamma$ that satisfies condition (1), then we say that $U$ is \emph{$c$-quasiconvex}.

The next two lemmas allow us to assume for the rest of the paper that $X$ is geodesic, since any quasiconvex metric space is bi-Lipschitz equivalent to a geodesic metric space through the identity map.
\begin{Lem}[{\cite[Theorem 17.1]{C99}}] \label{Lem:Quasiconvex}
If $(X,d,\mu)$ is a complete $(C_0,Q)$-regular metric measure space and satisfies a $(p,C)$-Poincar\'e inequality, then there is a constant $c(C_0,Q,p,C) \ge 1$ such that $X$ is $c$-quasiconvex.
\end{Lem}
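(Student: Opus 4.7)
The plan is to reduce quasiconvexity to the comparability $d^*(x,y) \lsim d(x,y)$ for all $x,y \in X$, where
\[
d^*(x,y) := \inf\bigl\{\length(\gamma) : \gamma \text{ a rectifiable curve from } x \text{ to } y\bigr\}
\]
(with $\inf\emptyset = \infty$) is the intrinsic path metric. Once this bound holds, completeness together with the doubling property (from $Q$-regularity) and an Arzel\`a--Ascoli extraction from a length-minimizing sequence of curves produces an actual rectifiable curve from $x$ to $y$ realizing the bound, which is precisely the quasiconvexity conclusion.

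The first step is to establish $d^* < \infty$ everywhere. The Poincar\'e inequality together with $Q$-regularity gives the $Q$-Loewner property (via Lemma~\ref{Lem:Loewner}, after upgrading to a $Q$-Poincar\'e inequality using H\"older if $p \le Q$, or by a routine separate argument otherwise). For any $x,y\in X$ with $r = d(x,y)$, applying the Loewner function to the continua $\overline{B}(x, r/4)$ and $\overline{B}(y, r/4)$ shows that the family of rectifiable curves joining them has positive $Q$-modulus; in particular this family is nonempty, so $d^*(x,y) < \infty$.

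For the quantitative bound, I would fix $x \in X$ and $M>0$ and consider the truncated distance function $u(z) := \min\{d^*(x,z), M\}$. This function is lower semicontinuous on $(X,d)$, hence Borel measurable, and a direct curve-integration check shows that $g := \chi_{\{u<M\}}$ is an upper gradient for $u$. Applying the $(p,C)$-Poincar\'e inequality on a ball $B(x, 2\lambda r)$ containing $y$ gives a mean oscillation bound; a telescoping of averages over dyadic concentric balls around $x$ and around $y$, combined with $Q$-regularity, upgrades this to the pointwise bound $|u(x) - u(y)| \lsim r$. Sending $M \to \infty$ and using $u(x) = 0$ yields $d^*(x,y) \lsim r$, with constant depending only on $C_0, Q, p,$ and $C$.

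The main obstacle I anticipate is the regularity of $u$: the Poincar\'e inequality is typically applied to locally Lipschitz functions, whereas $u$ is a priori only lower semicontinuous on $(X,d)$. The resolution is a bootstrap: use the Loewner property at small scales to first establish local quasiconvexity, which in turn makes $d^*(x,\cdot)$ locally Lipschitz on $(X,d)$, and then use this regularity in the global Poincar\'e inequality argument. With these technicalities dispatched, the argument produces the quasiconvexity constant $c$ quantitatively in terms of $C_0, Q, p$, and the Poincar\'e constant $C$, as required.
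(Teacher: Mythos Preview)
The paper does not prove this lemma; it simply cites \cite[Theorem~17.1]{C99}. Your sketch is the standard Semmes--Cheeger argument, so there is nothing in the paper to compare against.

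Your outline is correct in spirit, but the bootstrap you propose for the regularity of $u$ contains a circularity you have not fully resolved. You invoke Lemma~\ref{Lem:Loewner} on the sets $\overline{B}(x,r/4)$ and $\overline{B}(y,r/4)$, which requires these to be nondegenerate \emph{continua}; compactness follows from completeness and doubling, but connectedness of balls is itself a consequence of (local) quasiconvexity, which is what you are proving. Even granting connectedness, the Loewner bound only yields a rectifiable curve joining the two balls, not one joining $x$ to $y$, so you still need local rectifiable connectivity near $x$ and near $y$ to close the argument---again the very thing at issue. The clean fix, used in Cheeger's proof and in \cite{HKST15}, is to bypass the Loewner step entirely: under completeness and doubling, the Poincar\'e inequality stated for Lipschitz test functions self-improves to one valid for all measurable functions with $p$-integrable upper gradients. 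Since $u=\min\{d^*(x,\cdot),M\}$ is lower semicontinuous with constant upper gradient $1$, your telescoping argument then applies directly and yields both finiteness of $d^*$ and the quantitative bound $d^*(x,y)\lesssim d(x,y)$ in one stroke, with no bootstrap needed.
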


\begin{Lem}[{\cite[Lemma 8.3.18]{HKST15}}] \label{Lem:Preserve}
Let $d_1$ and $d_2$ be two bi-Lipschitz equivalent metrics on $X$.
If $\mu$ is Ahlfors regular with respect to $d_1$, then it is Ahlfors regular with respect to $d_2$, quantitatively.
Furthermore, if $(X,d_1,\mu)$ supports a $p$-Poincar\'e inequality, then $(X,d_2,\mu)$ supports a $p$-Poincar\'e inequality, quantitatively.
\end{Lem}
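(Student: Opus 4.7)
The plan is to exploit the fact that bi-Lipschitz equivalence preserves the metric structure up to a fixed factor $L$, and to transfer each hypothesis (Ahlfors regularity, Poincar\'e inequality) through this comparison. Fix $L \ge 1$ with $L^{-1} d_1 \le d_2 \le L d_1$ and write $B_i(x,r)$ for the open $d_i$-ball of radius $r$ centered at $x$. The central observation is the ball inclusion
\[ B_1(x, r/L) \subseteq B_2(x, r) \subseteq B_1(x, Lr) \]
for every $x \in X$ and $r > 0$. Since the measure $\mu$ is unchanged, Ahlfors regularity with respect to $d_2$ is then immediate: $\mu(B_2(x,r))$ is squeezed between $\mu(B_1(x, r/L)) \asymp (r/L)^Q$ and $\mu(B_1(x, Lr)) \asymp (Lr)^Q$, giving the desired bounds with a new constant depending only on $C_0$, $Q$, and $L$.

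For the Poincar\'e inequality, the key preliminary is that bi-Lipschitz equivalence scales curve lengths by at most $L$: a curve is rectifiable with respect to $d_1$ iff it is rectifiable with respect to $d_2$, and $\length_{d_2}(\gamma) \le L\, \length_{d_1}(\gamma)$ for any rectifiable $\gamma$. It follows that if $g$ is an upper gradient of $u$ with respect to $d_2$, then $Lg$ is an upper gradient of $u$ with respect to $d_1$: parametrizing $\gamma$ by $d_1$-arclength gives $ds_{d_2} \le L\, ds_{d_1}$ as measures on $\gamma$, so $|u(x)-u(y)| \le \int_\gamma g\, ds_{d_2} \le \int_\gamma (Lg)\, ds_{d_1}$. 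Now, given a $d_2$-ball $B = B_2(x,r)$, apply the $(p,C)$-Poincar\'e inequality for $d_1$ to $u$ with upper gradient $Lg$ on the containing $d_1$-ball $B' = B_1(x, Lr)$. The standard identity $\fint_B |u - u_B|\, d\mu \le 2 \fint_B |u - u_{B'}|\, d\mu$, together with the measure comparability $\mu(B') \lsim \mu(B)$ coming from the Ahlfors regularity just established, shows that the $d_2$-oscillation on $B$ is controlled by the $d_1$-oscillation on $B'$. The Poincar\'e inequality in $d_1$ then bounds this by a constant multiple of $L^2 \lambda r \left(\fint_{B_1(x, \lambda L r)} g^p\, d\mu\right)^{1/p}$, and a final application of the ball inclusion $B_1(x, \lambda L r) \subseteq B_2(x, \lambda L^2 r)$ and measure comparability replaces the $d_1$-average by a $d_2$-average, giving the $p$-Poincar\'e inequality for $d_2$ with dilation constant $\lambda L^2$ and a Poincar\'e constant depending only on $C$, $L$, $C_0$, and $Q$.

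The main obstacle, such as it is, amounts to the bookkeeping of ball radii and measure comparisons at each step; no new analysis beyond the standard manipulations of Poincar\'e inequalities is required once the ball inclusion and upper-gradient scaling are in hand.
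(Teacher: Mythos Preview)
Your argument is correct and is the standard one. Note that the paper does not actually prove this lemma; it is quoted verbatim from \cite[Lemma 8.3.18]{HKST15} and no proof is given in the paper itself, so there is no in-paper argument to compare against. Your proof supplies exactly the details one expects: the ball inclusions $B_1(x,r/L)\subset B_2(x,r)\subset B_1(x,Lr)$ immediately give Ahlfors regularity, and the observation that $Lg$ is a $d_1$-upper gradient whenever $g$ is a $d_2$-upper gradient (via $ds_{d_2}\le L\,ds_{d_1}$) lets you pull the Poincar\'e inequality across, with the usual ``change of ball'' inequality $\fint_B|u-u_B|\,d\mu\le 2\fint_B|u-u_{B'}|\,d\mu$ and measure comparability handling the bookkeeping. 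One small point worth making explicit: the paper's form of the Poincar\'e inequality has $\diam(B(x,\lambda r))$ rather than $r$ on the right-hand side, so to close the loop you should note that $\diam_{d_1}(B_1(x,\lambda L r))\le L\,\diam_{d_2}(B_2(x,\lambda L^2 r))$, which follows from the inclusion $B_1(x,\lambda L r)\subset B_2(x,\lambda L^2 r)$ and the metric comparison; this is implicit in your ``$L^2\lambda r$'' estimate but deserves a line if you want the argument to match the paper's stated form of the inequality.
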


\subsection{Bi-Lipschitz approximation of curves and avoiding small sets.}

We end this section with a lemma that allows us---inside a good metric measure space---to approximate any curve by a bi-Lipschitz arc which also avoids a ``small set'' quantitatively.

\begin{Lem}\label{Lem:ApproximatingCurve}
Given $C_0,C,C_1> 1$, $Q>2$, $p\in [1,Q-1)$, $c \in (0,1)$, $\lambda \in (0,c)$, and $\alpha\in (0,Q-p)$, there exist constants $L(C_0,Q,p,C,C_1,\alpha,\lambda)>1$ and $\beta(C_0,Q,p,C,C_1,\alpha,c) \in (0,c)$ with the following property.

Let $(X,d,\mu)$ be a complete, $(C_0,Q)$-regular metric measure space supporting a $(p,C)$-Poincar\'e inequality, let $Y \subset X$ be $(C_1,\alpha)$-HTB, and let $\sigma:[0,1] \to X$ be a curve satisfying $d(\sigma(0),\sigma(1)) \ge c\diam\sigma([0,1])$ and $\dist(\{\sigma(0),\sigma(1)\},Y) \ge c\diam\sigma([0,1])$. There exists an $(L,\diam{\sigma([0,1])})$-quasisimilarity $\gamma:[0,1] \to X$ such that $\gamma(0) = \sigma(0)$, $\gamma(1) = \sigma(1)$, 
\[ \sup_{t\in[0,1]}\dist(\gamma(t),\sigma([0,1])) \le 4\lambda\diam{\sigma([0,1])}, \quad\text{and}\quad \inf_{t\in[0,1]}\dist(\gamma(t),Y) \ge \beta\lambda\diam{\sigma([0,1])}. \]
\end{Lem}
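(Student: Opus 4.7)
I plan to build $\gamma$ as the scaled arclength reparametrization of a concatenation of short, nearly geodesic curves that shadow $\sigma$ at scale $\delta = \lambda R/40$, where $R=\diam\sigma([0,1])$. First, by Lemma \ref{Lem:Quasiconvex} and Lemma \ref{Lem:Preserve} I may assume $X$ is geodesic. Since the $(p,C)$-Poincar\'e inequality implies a $(Q,C_2)$-Poincar\'e inequality, Lemma \ref{Lem:Loewner} supplies a Loewner function $\Phi_0$ for $X$ depending only on $C_0,Q,p,C$, and Lemma \ref{lem:BHR} gives a porosity constant $p_0 = p_0(C_0,Q,C_1,\alpha)$ for $Y$. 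The plan is to discretize $\sigma$ at scale $\delta$ via a carefully chosen chain, then connect consecutive chain points by short curves selected to avoid $Y$ quantitatively using a $p$-modulus comparison.

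For the chain, I extract a maximal $\delta$-separated subset $\mathcal N \subset \sigma([0,1])$ containing $\sigma(0)$ and $\sigma(1)$ of size $|\mathcal N| \leq N_0(C_0,Q,\lambda)$ (by $(C_0,Q)$-HTB), build the graph on $\mathcal N$ with edges of length $\leq 3\delta$ (connected by maximality plus connectedness of $\sigma$), and take a graph-geodesic path $\sigma(0) = w_0, w_1, \ldots, w_K = \sigma(1)$ with $K \leq N_0$, giving the shortcut-free estimate $d(w_i, w_j) \geq 3\delta$ whenever $|i-j| \geq 2$. Using porosity of $Y$, I perturb each interior $w_i$ by at most $\delta/100$ to ensure $\dist(w_i, Y) \geq \delta/(100 p_0)$. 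For each $i$, set $B_i = B(w_{i-1}, 10\delta)$ and let $\Gamma_i$ be the family of curves in $B_i$ joining $B(w_{i-1}, \delta/64)$ to $B(w_i, \delta/64)$, and $\Gamma_i^Y \subset \Gamma_i$ those meeting $N(Y, \beta\delta)$. Rescaling $B_i$ by $1/(10\delta)$ and applying the $p$-version of the Loewner property (which in $(C_0,Q)$-regular, $(p,C)$-PI spaces yields $\Mod_p$-lower bounds scaling as $\delta^{Q-p}$) gives $\Mod_p(\Gamma_i) \geq m_0\,\delta^{Q-p}$ with $m_0 = m_0(C_0,Q,p,C) > 0$. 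Covering $Y \cap B_i$ by $\leq C_1(10/\beta)^{\alpha}$ balls of radius $\beta\delta$ using the HTB property of $Y$, and applying the standard small-ball $p$-capacity estimate $\Mod_p(\text{curves through }B(y,\beta\delta)) \lesssim (\beta\delta)^{Q-p}$ (valid for $p < Q$ in $Q$-regular spaces), subadditivity of $p$-modulus yields
\[ \Mod_p(\Gamma_i^Y) \leq C_3\,\beta^{\,Q-p-\alpha}\,\delta^{Q-p}. \]
Since $\alpha < Q-p$, I fix $\beta = \beta(C_0,Q,p,C,C_1,\alpha,c) \in (0,c)$ small enough that $C_3\beta^{Q-p-\alpha} < m_0/2$; then $\Gamma_i \setminus \Gamma_i^Y \neq \emptyset$ and one obtains a rectifiable $\eta_i$ from $w_{i-1}$ to $w_i$ inside $B_i$ with $\length(\eta_i) \leq C_4\delta$ and $\dist(\eta_i, Y) \geq \beta\delta/2$.

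Concatenating $\eta_1, \ldots, \eta_K$ into $\ove\gamma$, removing self-intersection loops (possible without increasing length in a path-connected metric space), and reparametrizing by scaled arclength gives $\gamma:[0,1] \to X$. The image lies in $\bigcup_i B_i \subset N(\sigma([0,1]), \lambda R/4)$, so $\sup_t \dist(\gamma(t), \sigma([0,1])) \leq 4\lambda R$, and $\inf_t \dist(\gamma(t), Y) \geq \beta\delta = (\beta/40)\lambda R$ (absorbing $1/40$ into the statement's $\beta$). The total length is $\leq C_4 K \delta \leq L R$ with $L = L(C_0,Q,p,C,C_1,\alpha,\lambda)$, yielding the upper Lipschitz bound; the lower bi-Lipschitz bound follows because the shortcut-free estimate $d(w_i, w_j) \geq 3\delta$ for $|i-j| \geq 2$ combined with the localization $\eta_i \subset B_i$ forces $\gamma$ to be chord-arc with constant $L$.

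The hard part will be the $p$-modulus inequality for $\Gamma_i^Y$: the HTB covering of $Y$ has multiplicity $\sim \beta^{-\alpha}$, while each small ball contributes only $(\beta\delta)^{Q-p}$ to the modulus; only the strict inequality $\alpha < Q - p$ produces the decisive gain $\beta^{Q-p-\alpha} \to 0$ needed to beat the Loewner lower bound. The $(p,C)$-Poincar\'e inequality is essential both for the Loewner-type lower bound on $\Mod_p(\Gamma_i)$ and for validating the small-ball $p$-capacity estimate. A secondary subtlety is translating the shortcut-free estimate on the skeleton $(w_i)$ into a uniform chord-arc bound on $\gamma$, which relies on the localization $\eta_i \subset B_i$.
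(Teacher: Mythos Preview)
Your overall strategy closely parallels the paper's: discretize $\sigma$ at scale $\sim\lambda\diam\sigma$, build a connected graph on a net, extract a shortest chain, perturb interior vertices off $Y$ using porosity (Lemma~\ref{lem:BHR}), connect consecutive vertices by short curves avoiding $Y$, and concatenate. Your direct $p$-modulus argument for the avoidance step---a $p$-Loewner lower bound from the $(p,C)$-Poincar\'e inequality versus an HTB covering of $Y$ and the small-ball estimate $\Mod_p(\text{through }B(y,\beta\delta))\lesssim(\beta\delta)^{Q-p}$, with the decisive exponent gap $Q-p-\alpha>0$---is sound, and is essentially what the paper imports wholesale as \cite[Corollary~3.4]{HVZ24}. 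In that sense your treatment is more self-contained.

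The genuine gap is in the final step: the claimed lower bi-Lipschitz bound for $\gamma$ does not follow from the ingredients you list. The shortcut-free estimate $d(w_i,w_j)\ge 3\delta$ for $|i-j|\ge 2$ is far too weak relative to the localization $\eta_i\subset B_i=B(w_{i-1},10\delta)$: since the balls $B_i$ have diameter $20\delta\gg 3\delta$, even non-adjacent $B_i,B_j$ can overlap substantially, so points on different pieces $\eta_i,\eta_j$ may lie at distance $o(\delta)$ while separated by arclength $\gtrsim|i-j|\,\delta$. Worse, the individual curves $\eta_i$ extracted from the modulus argument are only rectifiable of length $\le C_4\delta$; nothing makes them chord-arc, so the lower bound fails already within a single piece. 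Loop removal does not help: an injective rectifiable arc can still have cusps and near-tangencies.

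The paper does not attempt to make the concatenation bi-Lipschitz. It uses the concatenation $S$ only to record a length bound and the quantitative distance to $Y$, and then invokes \cite[Proposition~1.5]{HVZ24} as a black box to replace $S$ by a bi-Lipschitz arc $\gamma$ within Hausdorff distance small enough to preserve the $Y$-avoidance. You need either that black box or a substantially different argument (e.g., producing each $\eta_i$ already bi-Lipschitz and controlling the joins) to close the proof.
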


\begin{proof}
Rescaling the metric, we may assume for simplicity that $\diam{\sigma([0,1])}=1$.

Let $V$ be a maximal $\lambda$-separated subset of $\sigma$ that contains $\sigma(0)$ and $\sigma(1)$. Define a graph $G=(V,E)$ with vertex set $V$ and edges $E = \{\{v,v'\} : v,v' \in V, d(v,v') \le \lambda\}$. We claim that $G$ is connected, i.e., for any $v,v' \in V$, there exist $v_1,\dots,v_n \in V$ such that $v_1=v$, $v_n=v'$, and $\{v_{i-1},v_{i}\} \in E$ for all $i\in\{2,\dots,n\}$. Towards a contradiction, suppose that $G'=(V',E')$ is a nonempty connected component of $G$ with $V'\subsetneq V$. Then, $\dist(V',V\setminus V') > \lambda$. But then $\sigma$ is contained in the union of the disjoint closed sets $G_1=\overline{N}(V',\lambda/2)$ and $G_2 = \overline{N}(V\setminus V',\lambda/2)$, which contradicts the fact that $\sigma$ is connected. 

Since $G$ is connected, there is a shortest path of vertices $\{v_1,\dots,v_n\}$ in $G$ with $v_1 = \sigma(0)$ and $v_n = \sigma(1)$.
Since $B(v_i,2^{-1}\lambda)$ are disjoint and contained in $B(\sigma(0),1)$, by the $(C_0,Q)$-regularity of $X$, 
\[ 2^{-Q}C_0^{-1}\lambda^Qn \le \sum_{i = 1}^n \mu(B(v_i,2^{-1}\lambda)) \le \mu(B(\sigma(0),1)) \le C_0, \]
so $n \le 2^QC_0^2\lambda^{-Q}$. By Lemma \ref{lem:BHR}, $Y$ is $p_0$-porous for some $p_0(C_0,Q,C_1,\alpha) \ge 1$ and for each $i\in\{2,\dots,n - 1\}$, there exists $v_i' \in B(v_i,3^{-1}\lambda)$ such that $B(v_i',(3p_0)^{-1}\lambda) \subset B(v_i,3^{-1}\lambda) \setminus Y$.
Note that since $\dist(\{\sigma(0),\sigma(1)\},Y) \ge c$, it suffices to choose $v_1' = v_1 = \sigma(0)$ and $v_n' = v_n = \sigma(1)$.

Fix $i\in\{1,\dots,n-1\}$. By \cite[{Corollary 3.4}]{HVZ24}, there is a curve $S_i$ lying in $B(v_i',2\lambda)$ joining $v_i'$ and $v_{i + 1}'$ such that $\length(S_i) \le \ell\lambda$ and for each $z \in S_i$, $\dist(z,Y) \ge \beta'\lambda$ for some $\ell(C_0,Q,p,C,c) \ge 1$ and $\beta'(C_0,Q,p,C,C_1,\alpha,c) \in (0,1/2)$. Let $S:[0,1]\to X$ denote the concatenation of curves $S_i$. Then $S$ satisfies $S(0) = \sigma(0)$, $S(1) = \sigma(1)$, 
\[ \length(S) \leq (n - 1)\ell\lambda \le 2^QC_0^2\ell\lambda^{1 - Q}, \]
and $S$ stays at least a distance $\beta'\lambda$ from $Y$. By \cite[Proposition 1.5]{HVZ24} there exists a constant $L(C,C_0,Q,p,C_1,\alpha,\lambda) \ge 1$ and there exists an $L$-bi-Lipschitz curve $\gamma:[0,1] \to X$, joining $S(0)$ to $S(1)$ and satisfying
\begin{align*}
\dist_H(\gamma([0,1]),S([0,1])) < \beta'\lambda^Q(2^{Q + 1}C_0^2\ell)^{-1}\diam{S([0,1])} & \le \beta'\lambda^Q(2^{Q + 1}C_0^2\ell)^{-1}\length(S) \\
& \le 2^{-1}\beta'\lambda.
\end{align*}
The curve $\gamma$ has length at most $L$ and
\[ \dist(\gamma([0,1]),Y) \geq \dist(S([0,1]),Y) - \dist_H(\gamma([0,1]),S([0,1])) \geq 2^{-1}\beta'\lambda. \]
Lastly, for each $t \in [0,1]$,
\[ \dist(\gamma(t),\sigma([0,1])) \le \dist(\gamma(t),S([0,1])) + \dist(S([0,1]),\sigma([0,1])) \le 2^{-1}\beta'\lambda + 3\lambda \le 4\lambda. \qedhere \]
\end{proof}

\section{A decomposition of compact totally disconnected sets} \label{Sec:ADecompositionOfUniformlyDisconnectedSets}

One natural property of compact, totally disconnected sets in Euclidean spaces is that they admit a \emph{defining sequence}, which is, roughly speaking, a way to coherently encapsulate clusters of the set; see for example Armentrout's work in $\R^3$ \cite{A66}. The main result of this section is Proposition \ref{Prop:DefiningSequence}, which shows that the same idea works in metric spaces with nice geometry.

\begin{Prop}\label{Prop:DefiningSequence}
Let $(X,d,\mu)$ be a complete, fully connected, unbounded, $(C_0,Q)$-regular, geodesic metric measure space supporting a $(Q,C)$-Poincar\'e inequality. Let $E$ be a compact, totally disconnected subset of $X$. There exists a dictionary $\W \subset \N^*$, and there exists a collection $\{U_{w} : w \in \W\}$ of bounded full sets such that the following conditions hold.
\begin{enumerate}
\item[(D1)] The empty word $\varepsilon \in \W$. If $w \in \W\setminus\{\varepsilon\}$, then $w^{\uparrow} \in \W$. For each $w \in \W$, there exists $n_w \in \N$ such that $wi \in \W$ if and only if $i \in \{1,\dots,n_w\}$.
\item[(D2)] For each $n \in \N\cup\{0\}$, $E \subset \bigcup_{\substack{w\in \W \\ |w|=n}} U_w$. Conversely, if $w\in \mathcal{W}$, then $U_w \cap E \ne \emptyset$.
\item[(D3)] If $wi,wj \in \W$ with $i\neq j$, then $U_{wi} \cap U_{wj} = \emptyset$.
\item[(D4)] If $w \in \W \setminus\{\varepsilon\}$, then $U_{w} \subset U_{w^{\uparrow}}$, $\diam U_{w} \le \tfrac12\diam U_{w^{\uparrow}}$, and $\dist(U_{w},\dee U_{w^\uparrow}) >0$.
\item[(D5)] If $w \in \W$, then $\dist(\dee U_w,E) > 0$.
\item[(D6)] If $w \in \W$, then the set
\[ \hat U_w := \ove{U_w} \setminus \bigcup_{i = 1}^{n_w} U_{wi} \]
is path connected.
\end{enumerate}
Moreover, if $E$ is $c$-uniformly disconnected for some $c > 1$, then there exists $\xi (C_0,Q,C,c) \ge 1$ such that for all $\delta \in (0,(2\xi)^{-1})$, there exists $N(C_0,\delta,Q) \in\N$ such that $\mathcal{W} \subset \{1,\dots,N\}^*$ and the following quantitative versions of (D4) and (D5) hold.
\begin{enumerate}
\item[(D4')]  If $w \in \W \setminus\{\varepsilon\}$, then $U_{w} \subset U_{w^{\uparrow}}$,
\begin{equation}\label{eq:diamUwi}
\delta\diam U_{w^{\uparrow}} \le \diam U_{w} \le \xi\delta\diam U_{w^{\uparrow}}, 
\end{equation}
and
\begin{equation}\label{eq:distUw-Uwi}
\dist(U_{w},\dee U_{w^\uparrow}) \ge \xi^{-1}\diam U_{w}. 
\end{equation}
\item [(D5')] If $w \in \W$, then $\dist(\dee U_w,E) \ge \xi^{-1}\diam U_w$.
\end{enumerate}
\end{Prop}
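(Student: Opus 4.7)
The plan is to build $\W$ and $\{U_w\}_{w\in\W}$ inductively on word length. At each level I partition $E$ into clopen chunks of small diameter and thicken each chunk into a bounded open full set; the ``fullification'' step uses that the complement of any compact set in $X$ has a unique unbounded component, which is a direct consequence of Lemma~\ref{Lem:LargeComponent} (two unbounded components would give a compact set whose complement has no bounded components yet is disconnected).

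For the base case I take $U_\varepsilon := X\setminus\overline{D}$, where $D$ is the unbounded component of $X\setminus\overline{B(x_0,R)}$ for a ball containing $E$ in its interior; this is bounded, open, full (its complement $\overline{D}$ is connected), contains $E$, and satisfies $\dist(E,\dee U_\varepsilon)>0$. For the inductive step, given $U_w$, I use compactness and total disconnectedness of $E\cap\overline{U_w}$ to partition it into finitely many pairwise disjoint relatively clopen pieces $E_{w,1},\dots,E_{w,n_w}$ of diameter at most $\tfrac{1}{3}\diam U_w$. These pieces lie at positive pairwise distance, so for sufficiently small $\rho>0$ the neighborhoods $V_{w,k}:=N(E_{w,k},\rho)$ have pairwise disjoint closures, all contained in $U_w$, all of diameter at most $\tfrac{1}{2}\diam U_w$. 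I then set $U_{wk}:=X\setminus\overline{D_{w,k}}$ where $D_{w,k}$ is the unique unbounded component of $X\setminus\overline{V_{w,k}}$. By construction $U_{wk}$ is a bounded open full set containing $V_{w,k}$, and choosing $\rho$ small enough (using $\dist(E\cap\overline{U_w},\dee U_w)>0$) forces the bounded components of $X\setminus\overline{V_{w,k}}$ to lie inside $U_w$, giving $U_{wk}\subset U_w$. Conditions (D1)--(D5) follow directly.

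For (D6), I will iteratively apply the local-fullness half of the fully connected hypothesis: remove $\overline{U_{w1}},\dots,\overline{U_{wn_w}}$ from $U_w$ one at a time. At each step the removed set is a closed full set contained in and disjoint from the boundary of the current connected open region, so the remainder remains connected; local path connectedness of $X$ upgrades this to path connectedness, and the boundaries $\dee U_{wk}$ and $\dee U_w$ are recovered in the closure without breaking it. For the quantitative statement under $c$-uniform disconnectedness, I replace the ad hoc clopen partition by the one induced by the equivalence relation of being joinable by a $\delta\diam U_w$-chain (Lemma~\ref{Lem:EquivalentDefinition}): the resulting pieces automatically have diameter at most $\xi\delta\diam U_w$ and pairwise distance at least $\xi^{-1}\delta\diam U_w$ for some $\xi=\xi(c)$. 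A merging step that absorbs any piece of diameter below $\delta\diam U_w$ into a neighbor enforces the lower bound in \eqref{eq:diamUwi}, and a volume-packing argument from Ahlfors $Q$-regularity bounds $n_w$ by $N(C_0,\delta,Q)\lsim \delta^{-Q}$.

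The main obstacle will be verifying (D6): the iterative removal argument must proceed through a nested sequence of connected open sets, which depends essentially on the fully connected hypothesis (and fails in spaces like $(\mathbb{S}^1)^n$). A secondary difficulty is ensuring that the fullified $U_{wk}$ stays inside $U_w$---equivalently, ruling out bounded components of $X\setminus\overline{V_{w,k}}$ escaping through $\dee U_w$---and, in the quantitative part, simultaneously enforcing both the upper and the lower diameter bounds on $U_{wk}$ via the merging step while preserving the separation estimate (D5').
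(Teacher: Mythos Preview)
Your outline is close to the paper's and (D6) is handled the same way, but there is a real gap at (D3)/(D4). After you ``fullify'' each $V_{w,k}$ to $U_{wk}$, the sets $U_{wk}$ need not be pairwise disjoint: nothing prevents some $V_{w,j}$ from sitting inside a bounded component of $X\setminus \overline{V_{w,k}}$, in which case $U_{wj}\subset U_{wk}$. (Think of $E_{w,k}$ a finite $\rho$-net on a sphere and $E_{w,j}$ a point inside; for your choice of $\rho$ the closures $\overline{V_{w,k}},\overline{V_{w,j}}$ are disjoint, yet the fullified $U_{wk}$ swallows $V_{w,j}$.) The paper deals with exactly this: it shows the fullifications are pairwise either nested or disjoint, and then keeps only the maximal ones as the $U_{wi}$. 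Relatedly, your bound $\diam E_{w,k}\le \tfrac13\diam U_w$ is too coarse: fullification can inflate diameter by the factor $K$ of Lemma~\ref{Lem:LargeComponent} (via $\diam U_{wk}\le K\diam\partial U_{wk}\le K\diam V_{w,k}$), so you must start from pieces of diameter at most $\sim(6K)^{-1}\diam U_w$ to land at $\diam U_{wk}\le \tfrac12\diam U_w$. Note also that $U_{wk}\subset U_w$ follows not from smallness of $\rho$ but from fullness of $U_w$: since $X\setminus U_w$ is connected, unbounded, and disjoint from $V_{w,k}$, it lies in the unbounded component of $X\setminus V_{w,k}$, hence in $X\setminus U_{wk}$.

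For the quantitative part your chain-equivalence partition is morally the same as the paper's, but the ``merging step'' for the lower diameter bound is both unnecessary and fragile (iterated merges can destroy the upper bound and the separation). The paper instead takes the \emph{components} $V_{w,i}$ of the single open set $N(E\cap U_w,\delta\diam U_w)$; each component automatically contains a ball of radius $\delta\diam U_w$, giving $\diam V_{w,i}\ge \delta\diam U_w$ for free, while the chain argument (your Lemma~\ref{Lem:EquivalentDefinition}) gives the upper bound $\diam V_{w,i}\lesssim_c \delta\diam U_w$, and then $\xi=12cK$ absorbs both the chain constant and the fullification factor $K$.
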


For the rest of this section, we fix a complete, fully connected, unbounded, $(C_0,Q)$-regular, geodesic metric measure space $(X,d,\mu)$ supporting a $(Q,C)$-Poincar\'e inequality.

\begin{Lem}\label{Lem:UniformNeighborhood}
Given $r>0$ there exists $c_0(C_0,Q,r) \ge 1$ with the following property. If $U$ is a bounded, path connected subset of $X$ and $x,y \in U$, then there is a curve $\gamma$ lying in $N(U,r\diam U)$ joining $x$ with $y$ such that $\length(\gamma) \le c_0d(x,y)$.
\end{Lem}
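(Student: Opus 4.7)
Write $D = \diam U$. The plan is to split into two regimes depending on how $d(x,y)$ compares to $rD$, and handle each with a different tool. In the easy regime $d(x,y) < rD$, I would just take any geodesic $\gamma$ in $X$ from $x$ to $y$ (available since $X$ is geodesic); it has length $d(x,y)$ and every point on it lies within distance $d(x,y) < rD$ of $x \in U$, so $\gamma \subset N(U, rD)$ and the constant $c_0 = 1$ suffices here.

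In the covering regime $d(x,y) \ge rD$ (equivalently $D \le d(x,y)/r$), I would run a discrete covering argument in the spirit of the first half of the proof of Lemma \ref{Lem:ApproximatingCurve}. Set $\rho = rD/4$, pick a maximal $\rho$-separated subset $V \subset U$ containing $x$ and $y$, and form the graph $G$ on $V$ whose edges are pairs at distance $\le 2\rho$. The key step---and the main (though minor) obstacle---is showing that $G$ is connected. If $V_1 \subsetneq V$ were a connected component, then $\dist(V_1, V\setminus V_1) > 2\rho$ would force the closed sets $\overline{N}(V_1, \rho)$ and $\overline{N}(V\setminus V_1, \rho)$ to be disjoint; but by maximality of $V$ their union contains $U$, contradicting connectedness of $U$. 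Then I would pick a shortest $G$-path $x = v_1, \ldots, v_n = y$.

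The quantification is then routine. The balls $B(v_i, \rho/2)$ are pairwise disjoint (since $V$ is $\rho$-separated) and all contained in a ball of radius comparable to $D$ around $x$, so $(C_0, Q)$-regularity bounds $n \le N$ with $N \lesssim C_0^2 r^{-Q}$ depending only on $C_0, Q, r$. Join consecutive $v_i, v_{i+1}$ by geodesic segments $\gamma_i$ of length $\le 2\rho$; since $v_i \in U$, each $\gamma_i \subset \overline{B}(v_i, 2\rho) \subset N(U, rD/2) \subset N(U, rD)$. The concatenation $\gamma$ therefore lies in $N(U,rD)$ and has length at most $(n-1)\cdot 2\rho \le NrD/2 \le (N/2)\,d(x,y)$, using the case hypothesis $rD \le d(x,y)$ in the last step. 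Taking $c_0 = \max\{1, N/2\}$ handles both regimes and depends only on $C_0, Q, r$ as required.
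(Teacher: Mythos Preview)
Your proposal is correct and follows essentially the same approach as the paper's proof: split into a short-distance case handled by a single geodesic, and a long-distance case handled by a maximal separated net in $U$, a graph-connectedness argument, and a cardinality bound from Ahlfors regularity. The only differences are cosmetic choices of the threshold and the separation scale; your connectedness argument is in fact slightly cleaner than the one in the paper.
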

	
\begin{proof}
Rescaling the metric, we may assume that $\diam U=1$. Set $N_r = N(U,r)$.

If $d(x,y) < r/6$, then we can join $x$ and $y$ by a geodesic $\gamma$, and this satisfies, for each $z \in \gamma$,
\[ \dist(z,X \setminus N_r) \ge \dist(\{x,y\},X \setminus N_r) - \tfrac{1}{2}\length(\gamma) \ge \tfrac{11}{12}r \]
and $\dist(z,\{x,y\}) \le \length(\gamma) = d(x,y) < r/6$.

If $d(x,y) \ge r/6$, then let $V$ be a $(r/6)$-net of $U$ containing $x$ and $y$. For each $u,v \in V$, fix a geodesic $\gamma_{u,v} = \gamma_{v,u}$ joining them and define
\[ G = \bigcup_{\substack{u,v \in V \\ d(u,v) \le r}} \gamma_{u,v}. \]

We show that the set $G$ is connected. Towards a contradiction, suppose that $Z$ is a nonempty path connected component of $G$ with $G \setminus Z \ne \emptyset$. Let $W = N(Z,2r)$. Note that $\partial W$ cannot be empty, since otherwise $W$ and $X \setminus W$ separate $X$. Thus, there is a point $\xi \in X$ with $\dist(\xi,Z) = 2r$. As $V$ is an $r$-net of $X$, there is a point $v \in V$ such that $d(\xi,v) \le r$. Necessarily $v \in G \setminus U$, since $\dist(v,Z) \ge \dist(\xi,Z) - d(\xi,v) \ge r$. We claim that there is a point $u \in V \cap Z$ such that $d(u,\xi) \le 5r$. To see this, choose $\zeta \in U$ such that $d(\xi,\zeta) = \dist(\xi,Z) = 2r$. If $\zeta \in V$, then taking $u = \zeta$, we have $d(u,\xi) = 2r$. If $\zeta \notin V$, then $\zeta$ lies on a geodesic joining two points $v_1,v_2 \in V$ for which $d(v_1,v_2) \le r$. Then either $d(\zeta,v_1) \le 3r$ or $d(\zeta,v_2) \le 3r$, say $d(\zeta,v_1) \le 3r$. Taking $u = v_1$, $d(u,\xi) \le d(u,\zeta) + d(\zeta,\xi) \le 5r$. Finally, $d(u,v) \le d(u,\xi) + d(\xi,v) \le 6r$, so in fact $\gamma_{u,v}$ is a path in $G$ joining $Z$ and $G \setminus Z$, a contradiction, which proves that $G$ is connected.

Since $G$ is connected there is a finite sequence $v_0,\dots,v_k$ of points in $V$ with $v_0 = x$, $v_k = y$, $d(v_{i - 1},v_i) \le r$ for each $i \in \{1,\dots,k\}$, and $v_i \ne v_j$ for distinct $i,j \in \{0,\dots,k\}$. Let $\gamma$ be the concatenation of the geodesic edges joining these vertices. Since $V$ is $(r/6)$-separated and contained in $U$, by $Q$-regularity of $X$,
\[ C_0^{-1}(r/12)^Q\card V \le \sum_{v \in V} \mu(B(v,r/12)) \le \mu(B(x,1)) \le C_0, \]
so $\card V \le 12^QC_0^2r^{-Q}$.
Thus
\[ \length(\gamma) \le kr \le r\card V \le 12^QC_0^2r^{1 - Q} \le 12^{Q + 1}C_0^2r^{-Q}d(x,y). \qedhere\]
\end{proof}

\begin{Lem}\label{Lem:LargeComponent}
There exists $K(C_0,Q,C) \ge 1$ such that if $U$ is a bounded and connected subset of $X$, then 
\begin{enumerate}[(a)]
\item $X \setminus U$ has exactly one component $V$ with $\diam V \ge K\diam U$, and
\item $\diam \dee U \le \diam U \le K\diam \dee U$.
\end{enumerate}
\end{Lem}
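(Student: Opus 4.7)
The plan is to prove both parts of Lemma \ref{Lem:LargeComponent} by applying the $Q$-Loewner property (Lemma \ref{Lem:Loewner}) to carefully chosen pairs of continua, and bounding the resulting modulus from above via the annulus modulus estimate (Lemma \ref{Lem:AnnulusModulus}). In both parts, the underlying principle is that a set with small diameter cannot effectively separate large regions of a Loewner space.

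For part (b), the inequality $\diam \partial U \le \diam U$ is immediate from $\partial U \subset \overline{U}$. For the reverse, I argue by contradiction: suppose $D := \diam U$ and $d := \diam \partial U$ satisfy $D \ge Kd$ with $K$ large. Fix $p \in \partial U$, so $\partial U \subset \overline{B}(p,d)$. Using $\diam U = D$, choose $x \in U$ with $d(x,p) \ge D/4$; since $X$ is geodesic, $B(x, d(x, \partial U)) \subset U$, and in particular $B(x, D/8) \subset U$ once $K$ is large enough. By unboundedness of $X$, pick $z$ with $d(z,p) = 10D$, which lies in $X \setminus \overline{U}$. Let $E \subset U$ be a geodesic segment of length $D/16$ emanating from $x$, and $F \subset X \setminus \overline{U}$ a geodesic segment of length $D$ emanating from $z$. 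These continua have $\Delta(E,F)$ bounded by a universal constant, so Lemma \ref{Lem:Loewner} yields $\Mod_Q(\Gamma(E,F)) \ge \Phi_0 > 0$. On the other hand, every curve from $E$ to $F$ must cross $\partial U \subset \overline{B}(p,d)$, and since $F \subset X \setminus B(p, 9D)$, each such curve contains a sub-curve joining $\overline{B}(p,d)$ to $X \setminus B(p, 9D)$. By Lemma \ref{Lem:AnnulusModulus}, the modulus of the latter family is at most $C_1 (\log(9K))^{1-Q}$, strictly less than $\Phi_0$ for $K$ sufficiently large (using $Q > 1$, which is implicit in the setting). This contradiction proves $D \le K\,d$ for some $K = K(C_0, Q, C)$.

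For part (a), existence is straightforward: path-connectedness of $X$ and unboundedness let us pick $z \in X$ with $d(z,U) \ge 2K\diam U$; a path from $z$ to any $u \in U$ has its portion in $X \setminus \overline{U}$ forming a connected subset of $X \setminus U$ of diameter $\ge K \diam U$, contained in a single component. For uniqueness, I first replace $U$ by $\overline{U}$ (which changes nothing quantitatively), so that $X \setminus U$ is open and each of its components $W$ is open and path connected with $\partial W \subset U$ (since distinct components of the open set $X \setminus U$ have disjoint closures). Applying part (b) to each bounded component $W$ yields $\diam W \le K_b \diam \partial W \le K_b \diam U$, so any component of diameter $\ge K \diam U$ with $K > K_b$ must be unbounded.

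The main obstacle is then to rule out two unbounded components. For this I would establish a quantitative linear local connectivity condition: there exists $C_* = C_*(C_0, Q, C)$ such that any two points $y_1, y_2 \in X$ with $d(y_i, p) \ge C_* r$ can be joined by a path in $X \setminus \overline{B}(p, r)$. Applied with $r = \diam U$, $p \in U$, and $y_i \in V_i$ chosen at distance $\ge C_* \diam U$ from $p$ in two distinct unbounded components (possible by unboundedness of each $V_i$), this produces a path in $X \setminus \overline{B}(p, \diam U) \subset X \setminus U$ joining $y_1$ and $y_2$, contradicting $V_1 \ne V_2$. The local connectivity property itself follows by a Loewner-versus-annulus-modulus argument analogous to part (b): for pairs $y_1, y_2$ at mutual distance $\asymp r$, place small continua near them and use $\Mod_Q(\Gamma(E_1, E_2)) \ge \Phi(O(1)) > C_1(\log C_*)^{1-Q}$ to force the existence of a curve in $\Gamma(E_1, E_2)$ avoiding $\overline{B}(p, r)$; arbitrary pairs are then handled by a chaining argument, stringing together short hops at distance $\asymp C_* r$ from $p$ using geodesicity of $X$ and the quantitative connectivity of shells at scale $r$ provided by $Q$-regularity.
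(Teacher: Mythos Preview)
Your proof of part (b) is essentially the same as the paper's: both argue by contradiction, produce continua $E\subset U$ and $F\subset X\setminus \overline U$ with bounded relative distance, and play the Loewner lower bound against the annulus modulus upper bound for the family of curves forced to pass through $\partial U\subset \overline B(p,d)$.

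For part (a), your route diverges from the paper's and is more involved. The paper does not split into bounded versus unbounded components, nor does it establish LLC. Instead it argues directly: assuming two components $V_1,V_2$ of $X\setminus U$ both have diameter $\ge K\diam U$, it produces continua $\gamma_i\subset V_i$ lying in the shell between $\overline B(u,\tfrac12 K\diam U)$ and $\overline B(u,K\diam U)$ for some $u\in U$, so that $\Delta(\gamma_1,\gamma_2)\le 2$ and hence $\Mod_Q(\gamma_1,\gamma_2)\ge\Phi(2)$. Since any curve joining $\gamma_1$ to $\gamma_2$ must pass through $U\subset\overline B(u,\diam U)$ and then out to radius $\tfrac12 K\diam U$, the annulus estimate gives $\Mod_Q(\gamma_1,\gamma_2)\le C_1(\log(K/2))^{1-Q}<\Phi(2)$, a contradiction. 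This is exactly the same mechanism you invoke for (b) and for LLC, just applied once and directly.

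Your approach is salvageable but has rough edges. The parenthetical ``distinct components of the open set $X\setminus \overline U$ have disjoint closures'' is false (think of $\mathbb R\setminus\{0\}$); fortunately you do not need it, since $\partial W\subset\overline U$ follows simply because $W$ is open in $X$ and any boundary point of $W$ cannot lie in the open set $X\setminus\overline U$. More substantively, passing from $U$ to $\overline U$ is not entirely innocent for the statement about components of $X\setminus U$, and your LLC chaining step (connecting arbitrary far-apart points through a shell at scale $r$) is only sketched; making it rigorous essentially reproduces the paper's direct modulus argument anyway. The paper's proof avoids all of this by never needing to know that large components are unbounded.
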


\begin{proof}
Let $C_1(C_0,Q) \ge 1$ be the constant from Lemma \ref{Lem:AnnulusModulus}, and let $\Phi:(0,\infty) \to (0,\infty)$ be the $Q$-Loewner function of $X$ given by Lemma \ref{Lem:Loewner}, which depends only on $C_0$, $Q$, and $C$.
Set
\[ K = \exp\left(\left(C_1^{-1}\Phi(4)\right)^{1/(1 - Q)}\right). \]

For (a), we first claim that there exists at least one component of $X\setminus U$ of diameter at least $K \diam{U}$. Indeed, if all components of $X\setminus U$ had diameter at most $K \diam{U}$, then it can be easily seen that $\diam{X} \leq (2K+1)\diam{U} < \infty$ which is false since $X$ is unbounded.

Now suppose for a contradiction that $V_1$ and $V_2$ are two different components of $X \setminus U$ with $\diam V_i \ge K\diam U$ for $i=1,2$. Fix $u \in U$ and note that $U \subset \overline B(u,\diam U)$. There also exist points $v_i \in V_i$ for $i=1,2$ such that $d(u,v_i) = K\diam U$. Let $\sigma_i$ be a curve in $V_i$ joining $v_i$ to $\dee U$ for $i=1,2$. Let $x_i$ be the first point that $\sigma_i$ intersects $\overline B(u,\frac12K\diam U)$ for $i=1,2$. Denote by $\gamma_i$ a subcurve of $\sigma_i$ joining $v_i$ to $x_i$ for $i=1,2$. Note that $\Delta(\gamma_1,\gamma_2) \le 2$, so $\Mod_Q(\gamma_1,\gamma_2) \ge \Phi(\Delta(\gamma_1,\gamma_2)) \ge \Phi(2)$. On the other hand, since $\gamma_1$ and $\gamma_2$ lie in different components of $X \setminus U$, any curve joining them must pass through $U$, and hence also $\overline B(u,\diam U)$. Thus $\Mod_Q(\gamma_1,\gamma_2) \le \Mod_Q(\ove B(u,\diam U),\gamma_1)$. Let $\Gamma$ be the collection of curves joining $X \setminus B(u,2K\diam U)$ to $\ove B(u,\diam U)$. By Lemma \ref{Lem:AnnulusModulus}, $\Mod_Q(\Gamma) \le C_1(\log(2K))^{1 - Q}$. Clearly $\Mod_Q(\ove B(u,\diam U),\gamma_1) \le \Mod_Q(\Gamma)$. By choice of $K$, 
\[ \Phi(2) \le \Mod_Q(\gamma_1,\gamma_2) \le \Mod_Q(\ove B(u,\diam U),\gamma_1) \le \Mod_Q(\Gamma) \le C_1(\log(2K))^{1 - Q} < \Phi(4), \]
which is a contradiction since $\Phi$ is decreasing.

For (b), since $\dee U \subset \ove U$ and $\diam U = \diam\ove U$, we have $\diam\dee U \le \diam\ove U = \diam U$. Next, we prove $\diam U \le K\diam\dee U$. Suppose for a contradiction that $\diam U > K\diam\dee U$. Choose $x_1,x_2 \in U$ such that $d(x_1,x_2) \ge \frac12\diam U$ and let $\gamma_1$ be a curve in $U$ joining $x_1$ and $x_2$. Note that there exists $y'\in X$ such that $\dist(y',U) \ge 2\diam U$ as otherwise we would have $\diam X < 5\diam U$, which is false. Since $X$ is connected, there is a point $y \in X$ such that $\dist(y,U) = 2\diam U$. Let $\sigma$ be a curve joining $y$ to a point on the boundary of $U$, and let $y_0$ be the first point $\sigma$ intersects $\ove N(U,\diam U)$. Set $\gamma_2$ to be a subcurve of $\sigma$ joining $y$ to $y_0$. Noting $\Delta(\gamma_1,\gamma_2) \le 4$, we have $\Mod_Q(\g_1,\g_2) \ge \Phi(4)$.

Fix now $z \in \dee U$.
Set $B_1 = \ove B(z,\dee U)$ and $B_2 = B(z,\diam U)$.
Note $\g_2 \subset X \setminus B_2$, as $\dist(\g_2,U) \ge \diam U$.
Let $\gamma$ be a curve joining $\gamma_1$ to $\gamma_2$.
Since $\gamma_1 \subset U$ and $\gamma_2 \subset X \setminus U$, $\gamma$ necessarily intersects $\dee U$, and hence $B_1$.
On the other hand, $\gamma$ intersects $\gamma_2$, hence it intersects $X \setminus B_2$.
Thus $\Mod_Q(\gamma_1,\gamma_2) \le \Mod_Q(B_1,X \setminus B_2)$.
By Lemma \ref{Lem:AnnulusModulus},
\[ \Mod_Q(B_1,X \setminus B_2) \le C_1\left(\log\left(\frac{\diam U}{\diam\dee U}\right)\right)^{1 - Q} < C_1(\log(K))^{1 - Q}. \]
Combining the previous inequalities and using the definition of $K$,
\[ \Phi(4) \le \Mod_Q(\gamma_1,\gamma_2) \le \Mod_Q(B_1,X \setminus B_2) < C_1(\log(K))^{1 - Q} = \Phi(4) \]
which gives a contradiction.
\end{proof}

\begin{Lem}\label{lem:Cantordecomp}
Let $E$ be a compact, totally disconnected metric space. Given $\delta \in (0,1)$, there exist mutually disjoint closed sets $E_1,\dots,E_k \subset E$ such that $E = E_1\cup\cdots \cup E_k$ and for all $i=1,\dots,k$, we have $\diam{E_i} \leq \delta \diam{E}$.
\end{Lem}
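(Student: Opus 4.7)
The plan is to exploit the standard fact that a compact, totally disconnected metric space is \emph{zero-dimensional}, meaning it admits a basis of clopen sets. Once this is established, the required decomposition follows by a routine cover-and-disjointify procedure; no metric-measure hypotheses from the body of the paper are used here.

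First, I would fix any $x \in E$ and set $r = \tfrac{1}{2}\delta\diam(E)$. I claim there is a clopen subset $V_x$ of $E$ with $x \in V_x \subset B(x,r) \cap E$, so that in particular $\diam V_x \le \delta \diam E$. This uses the fact that in a compact Hausdorff space the connected component of a point coincides with its quasi-component, i.e., with the intersection of all clopen neighborhoods of the point. Since $E$ is totally disconnected, the component of $x$ is $\{x\}$, so the family of clopen neighborhoods of $x$ shrinks down to $\{x\}$. Combined with the compactness of the closed set $E \setminus B(x,r)$, a standard finite intersection argument then produces a single clopen $V_x$ lying inside $B(x,r) \cap E$.

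Next, by compactness of $E$, the open cover $\{V_x\}_{x \in E}$ admits a finite subcover $V_{x_1},\dots,V_{x_m}$. I would disjointify in the usual way, setting $E_1 = V_{x_1}$ and, for $i \ge 2$,
\[ E_i = V_{x_i} \setminus (V_{x_1} \cup \cdots \cup V_{x_{i-1}}). \]
Each $E_i$ is a difference of clopen subsets of $E$, hence itself clopen in $E$ (so in particular closed in $E$). By construction the $E_i$ are pairwise disjoint and cover $E$, and each satisfies $\diam E_i \le \diam V_{x_i} \le \delta \diam E$. Discarding any empty $E_i$ and relabelling yields the decomposition stated in the lemma.

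The only slightly delicate point is the existence of arbitrarily small clopen neighborhoods in a compact totally disconnected metric space; this is a purely topological fact that one can either cite from any standard reference on general topology or verify directly via the component/quasi-component identification mentioned above. Everything after that is bookkeeping, so I do not expect any real obstacles.
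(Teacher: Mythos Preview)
Your argument is correct. The key topological input---that in a compact Hausdorff space components coincide with quasi-components, so a compact totally disconnected metric space has a basis of clopen sets---is standard, and the cover-then-disjointify step is routine. The diameter bound $\diam V_x \le 2r = \delta\diam E$ follows immediately from $V_x \subset B(x,r)$.

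The paper takes a different route: it invokes Brouwer's theorem to embed $E$ into the standard ternary Cantor set $\mathcal{C}$, then uses the dyadic cylinder structure $\{\mathcal{C}_w : w \in \{1,2\}^*\}$ and uniform continuity of the inverse embedding to find a level $n$ at which every preimage $f^{-1}(\mathcal{C}_w)$ has diameter at most $\delta\diam E$. Your approach is more self-contained, since it avoids citing the embedding theorem and works directly with zero-dimensionality; the paper's approach, by contrast, immediately hands you a tree-indexed refinement (the pieces at level $n$ naturally subdivide into pieces at level $n+1$), which is in the spirit of the defining sequences built later in the section, though the lemma as stated does not require that extra structure.
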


\begin{proof}
By Brouwer's theorem (see \cite[Theorem 7.8]{Kechris}), there exists an embedding $f:E \to \mathcal{C}$ of $E$ into the standard Cantor set $\mathcal{C}$. 

We create a dyadic subdivision $\{\mathcal{C}_{u} : u\in \{1,2\}^*\}$ as follows. Let $\phi_1,\phi_2 : \R \to \R$ be the similarities $\phi_1(x)=\frac13 x$ and $\phi_2(x) = \frac13 x + \frac23$. Define $\mathcal{C}_{\varepsilon} = \mathcal{C}$. Assuming that we have defined $\mathcal{C}_u$ for some $u \in \{1,2\}^*$, define $\mathcal{C}_{iu} = \phi_i(\mathcal{C}_u)$ for $i\in\{1,2\}$. 

By continuity of $f$, there exists $n\in\N$ such that $\sup_{w\in \{1,2\}^n}\diam{f^{-1}(\mathcal{C}_w)} \leq \delta \diam{E}$. To complete the proof, set $\{E_1,\dots,E_k\} = \{f^{-1}(\mathcal{C}_w) : w\in \{1,2\}^n\} \setminus \{\emptyset\}$.
\end{proof}

We are now ready to prove Proposition \ref{Prop:DefiningSequence}.

\begin{proof}[Proof of Proposition \ref{Prop:DefiningSequence}]
Rescaling the metric, we may assume that $\diam E \leq 1/2$. 

Fix a point $o \in E$, define $V_{\eps} = B(o,1)$, and note that $\dist(\partial V_{\varepsilon},E) \geq \frac12$. By Lemma \ref{Lem:LargeComponent}, $X \setminus V_{\varepsilon}$ has a unique unbounded component; denote the complement of this unbounded component by $U_{\varepsilon}$. Clearly $U_{\varepsilon}$ is connected and has connected complement, so it is full. Moreover, $\diam{U_{\varepsilon}} \leq K \diam{V_{\varepsilon}} \leq 2K$ and
\[ \dist(\partial U_{\varepsilon},E) \geq \dist(\partial V_{\varepsilon},E) \geq 1/2 \geq (4K)^{-1}\diam{U_{\varepsilon}}. \]

Assume now that for some $w\in \N^*$ we have defined a full set $U_w$ such that $E\cap U_w \neq \emptyset$ and $\dist(\partial U_w, E)>0$. Note that $E \cap U_w$ is compact and totally disconnected. By Lemma \ref{lem:Cantordecomp} we can partition $E\cap U_w = E_1 \cup\cdots\cup E_k$ such that $\diam{E_i} \leq (6K)^{-1} \diam{U_w}$. Let 
\[ \e = \min\left\{\tfrac12\dist(\partial U_w,E), \tfrac13 \min_{i\neq j}\dist(E_i,E_j),(6K)^{-1}\diam{U_w}\right\}.\]

Let $V_w$ be the $\e$-neighborhood of $E \cap U_w$ and denote by $\{V_{w,i} : i \in \mathcal{J}_w\}$ the components of $V_w$. Since each $V_{w,i}$ contains a ball of radius $\e$, by Ahlfors regularity of $X$, we have that $\mathcal{J}_w$ is a finite set. Moreover, 
\begin{equation}\label{eq:diamVwi2}
\e \leq \diam{V_{w,i}} \leq 2\e + (6K)^{-1}\diam{U_w} \leq (2K)^{-1}\diam{U_w}.
\end{equation}

Given $i \in \mathcal{J}_w$, by Lemma \ref{Lem:LargeComponent}, the set $X \setminus V_{w,i}$ has a unique unbounded component; denote the complement of this unbounded component by $\til{V}_{w,i}$. Clearly $V_{w,i} \subseteq \til{V}_{w,i}$. We claim that for any distinct $i,j \in \mathcal{J}_w$, either $\til{V}_{w,i} \subset \til{V}_{w,j}$, $\til{V}_{w,j} \subset \til{V}_{w,i}$, or $\til{V}_{w,i}\cap \til{V}_{w,j} =\emptyset$. If $V_{w,i} = V_{w,j}$, then $\til{V}_{w,i} = \til{V}_{w,j}$ and there is nothing to show. If $V_{w,i} \ne V_{w,j}$, then it is enough to show that either $(\til{V}_{w,i} \cup \til{V}_{w,j}) \setminus V_{w,i} \ne \emptyset$ or $(\til{V}_{w,i} \cup \til{V}_{w,j}) \setminus V_{w,j} \ne \emptyset$. For example, if ($\til{V}_{w,i} \cup \til{V}_{w,j}) \setminus V_{w,i} \ne \emptyset$, then $\til{V}_{w,i} \cup \til{V}_{w,j}$ is contained in a bounded component of $X \setminus V_{w,i}$, hence $\til{V}_{w,j} \subseteq \til{V}_{w,i}$. Since the union of two connected sets with nonempty intersection is connected, we need just show that, either $(\til{V}_{w,i} \cap \til{V}_{w,j}) \setminus V_{w,i} \ne \emptyset$, or $(\til{V}_{w,i} \cap \til{V}_{w,j}) \setminus V_{w,j} \ne \emptyset$. Suppose for contradiction that $(\til{V}_{w,i} \cap \til{V}_{w,j}) \setminus V_{w,i} = \emptyset$ and $(\til{V}_{w,i} \cap \til{V}_{w,j}) \setminus V_{w,j} = \emptyset$. Then
\[ \emptyset = ((\til{V}_{w,i} \cap \til{V}_{w,j}) \setminus V_{w,i}) \cup ((\til{V}_{w,i} \cap \til{V}_{w,j}) \setminus V_{w,j}) = (\til{V}_{w,i} \cap \til{V}_{w,j}) \setminus (V_{w,i} \cap V_{w,j}) = \til{V}_{w,i} \cap \til{V}_{w,j} \]
since $V_{w,i} \cap V_{w,j} = \emptyset$ as $\{V_{w,i} : i \in \mathcal{J}_w\}$ are disjoint.

By \eqref{eq:diamVwi2} and Lemma \ref{Lem:LargeComponent}, for each $i\in \mathcal{J}_w$
\begin{equation}\label{eq:diamtildeVwi1}
\e \leq \diam{V_{w,i}} \leq \diam{\til{V}_{w,i}} \leq K\diam{\partial \til{V}_{w,i}} \leq K\diam{\partial V_{w,i}} \leq K\diam{V_{w,i}} \leq \tfrac12\diam{U_w}
\end{equation}

Removing some indices from $\mathcal{J}_w$ if necessary, we obtain $\til{\mathcal{J}}_w \subset \mathcal{J}_w$ such that $\bigcup_{i\in \mathcal{J}_w}\til V_{w,i} \subset \bigcup_{i\in \til{\mathcal{J}}_w}\til V_{w,i}$ and distinct $i,j \in \til{\mathcal{J}}_w$ satisfy $\til{V}_{w,i} \ne \til{V}_{w,j}$. Let $\{U_{wi} : i \in \{1,\dots,n_w\}\}$ denote the maximal elements (with respect to inclusion) of $\{\til{V}_{w,i} : i \in \til{\mathcal{J}}_w\}$. For each $i \in \{1,\dots,n_w\}$, there exists $j \in \til{\mathcal{J}}_w$ such that $U_{wi} = \til{V}_{w,j}$. Since the complement of $\til{V}_{w,j}$ is connected, it is clear that $U_{wi}$ is connected and has connected complement, so $U_{wi}$ is full.

Let $\mathcal{W}$ be the set of all finite words $v \in \N^*$ such that $U_v$ has been defined. Property (D1) follows easily by induction on $|w|$. Properties (D2), (D3), and (D5) follow immediately from the construction of the domains $U_w$ and induction on $|w|$. Property (D4) follows from the construction of the domains $U_w$, induction on $|w|$, and \eqref{eq:diamtildeVwi1}.

Finally, for (D6), fix $w \in \W$. Since $X$ is fully connected and $U_w$ is a connected set containing the full set $U_{w1}$, $\ove{U_w} \setminus U_{w1}$ is path connected. Assume for some $k \in \{1,\dots,n_w\}$ that $\ove{U_w} \setminus \bigcup_{i = 1}^{k - 1} U_{wi}$ is path connected. Again since $X$ is fully connected and $\ove{U_w} \setminus \bigcup_{i = 1}^{k - 1} U_{wi}$ is a connected set containing the full set $U_{wk}$, $\ove{U_w} \setminus \bigcup_{i = 1}^k U_{wi}$ is path connected. Proceeding inductively, we obtain (D6).

Assume now that $X$ is $c$-uniformly disconnected. We prove the proposition for $\xi=12cK$ where $K>1$ is the constant from Lemma \ref{Lem:LargeComponent}. Fix $\delta \in (0,(2\xi)^{-1})$ and let $N = \lceil C_0^2\delta^{-Q} \rceil$.

Define $U_{\eps}$ as before. Assume that for some $w\in \{1,\dots,N\}^*$ we have defined a full set $U_w$ such that $\dist(\partial U_w,E) \geq (12c)^{-1} \diam{U_w}$. Let $V_w$ be a $(\delta\diam U_w)$-neighborhood of $E \cap U_w$ and denote by $\{V_{w,i} : i \in \mathcal{J}_w\}$ the components of $V_w$. We claim that
\begin{equation}
\delta\diam U_w \le \diam V_{w,i} \le 12c\delta\diam U_w, \qquad\text{for all $i\in \mathcal{J}_w$}. \label{eq:diamVwi}
\end{equation}
The lower bound of $\diam V_{w,i}$ is clear. Suppose for a contradiction that $\diam V_{w,i} > 12c\delta\diam V_w$ for some $i \in \mathcal{J}_w$. Choose $x,y \in V_{w,i}$ such that $d(x,y) \ge 2^{-1}\diam V_{w,i}$ and choose $x',y' \in V_{w,i} \cap E$ such that $d(x,x') < \delta\diam U_w$ and $d(y,y') < \delta\diam U_w$. Then
\[ d(x',y') \ge 2^{-1}\diam V_{w,i} - 2\delta\diam U_w > 2(3c - 1)\delta\diam U_w. \]
Since $V_{w,i}$ is connected and each point in $V_{w,i}$ is distance at most $\delta\diam U_w$ from a point in $E$, we can find a finite chain of points $\{x_0,\dots,x_k\} \subset E$, where $x_0 = x'$, $x_k = y'$, and $d(x_{j - 1},x_j) \le 2\delta\diam U_w$ for each $j \in \{1,\dots,k\}$. But this contradicts the equivalent definition of uniform disconnectedness given in Lemma \ref{Lem:EquivalentDefinition}. Thus, \eqref{eq:diamVwi} follows. Moreover, for all $i\in \mathcal{J}_w$,
\begin{equation}\label{eq:distVwi}
\dist(\dee V_{w,i},E) \ge \delta\diam U_w \ge (12c)^{-1}\diam V_{w,i}.
\end{equation}

For each $i\in \mathcal{J}_w$, let $B_i \subset V_{w,i}$ be a ball of radius $\delta\diam{U_w}$. The balls $B_i$ are mutually disjoint and contained in $U_w$. By Ahlfors regularity, 
\[ C_0^{-1}\delta^Q(\diam{U_w})^Q\card{\mathcal{J}_w} \leq \sum_{i \in \mathcal{J}_w} \mu(B_i) \leq \mu(U_w) \leq C_0 (\diam{U_w})^Q.\]
Therefore, $\card \mathcal{J}_w \le C_0^2\delta^{-Q} \le N$.

Define $\til{V}_{w,i}$ as before and recall that $V_{w,i} \subseteq \til{V}_{w,i}$ and that for any distinct $i,j \in \mathcal{J}_w$, either $\til{V}_{w,i} \subset \til{V}_{w,j}$, $\til{V}_{w,j} \subset \til{V}_{w,i}$, or $\til{V}_{w,i}\cap \til{V}_{w,j} =\emptyset$. By \eqref{eq:diamVwi} and Lemma \ref{Lem:LargeComponent}, for each $i\in \mathcal{J}_w$
\begin{align}\label{eq:diamtildeVwi2}
\delta\diam U_w \leq \diam{V_{w,i}} \leq \diam{\til{V}_{w,i}} \leq K\diam{\partial \til{V}_{w,i}} \leq K\diam{\partial V_{w,i}} &\leq K\diam{V_{w,i}}\\ 
&\leq 12cK\diam{U_w}.\notag
\end{align}

Define now $\{U_{wi} : i\in \til{\mathcal{J}}_w\}$ as before and recall that each $U_{wi}$ is full. Let $\mathcal{W}$ be the set of all finite words $v \in \N^*$ such that $U_v$ has been defined. The fact that $\mathcal{W} \subset \{1,\dots,N\}^*$ follows by an induction on $|w|$. Inequality \eqref{eq:diamUwi} follows from \eqref{eq:diamtildeVwi2} while (D5') follows from \eqref{eq:distVwi} and the fact that for all $w\in \{1,\dots,n_w\}$ there exists $j\in \mathcal{J}_w$ such that $\partial U_{w,i} \subset \partial V_{w,j}$. 

For \eqref{eq:distUw-Uwi}, recall that $\xi=12cK$ and that $\delta \in (0,(2\xi)^{-1})$. By (D5') and \eqref{eq:diamUwi},
\begin{align*}
\dist(U_{w},\dee U_{w^{\uparrow}}) \ge \dist(\dee U_{w^{\uparrow}},E) - \delta\diam U_{w^{\uparrow}} \ge (\xi^{-1} - \delta)\diam U_{w^{\uparrow}} &\ge (2\xi)^{-1}\diam U_{w^{\uparrow}}\\ 
&\ge \xi^{-1}\diam U_{w}.
\end{align*}
This completes the proof of the proposition.
\end{proof}



%

\section{Co-uniformity of uniformly disconnected sets} \label{Sec:Co-uniformity}

The goal of this section is to prove Theorem \ref{Thm:Uniformity}. For this section, fix a complete, fully connected, unbounded, $(C_0,Q)$-regular metric measure space $X$ supporting a $(Q,C)$-Poincar\'e inequality. By Lemmas \ref{Lem:Quasiconvex} and \ref{Lem:Preserve} we may assume that $(X,d)$ is geodesic. Fix also a $c$-uniformly disconnected set $E\subset X$. Without loss of generality we may assume that $\diam{E}\leq 1/2$. 

Let $\xi \ge 1$ and $N \in \N$ be the constants of Proposition \ref{Prop:DefiningSequence}.
Fix $\delta = (4\xi)^{-1}$ and let $\W \subseteq \{1,\dots,N\}^*$ and $\{U_w : w \in \W\}$ be the dictionary and decomposition of Proposition \ref{Prop:DefiningSequence}.
While the ``top set'' $U_{\eps}$ is only a bounded neighborhood of $E$, it is useful for the proof of Theorem \ref{Thm:Uniformity} to have a decomposition of the entirety of $X$.



\begin{Lem}\label{Lem:Exhaustion}
There exists a collection of full, open sets $U_{\varepsilon}=U_0 \subset U_{-1}\subset U_{-2}\subset \cdots$ such that the following conditions hold.
\begin{enumerate}[(E1)]
\item For each $n\in\N$, $(2\xi)^{-2}\diam U_{-n} \le \diam U_{1-n} \le \frac14\diam U_{-n}$, and
\[ \dist(U_{1-n},\dee U_{-n}) \ge \xi^{-1}\diam U_{1-n}. \]
\item For each $n\in\N$, $\dist(\dee U_{1-n},E) \ge \xi^{-1}\diam U_{1-n}$.
\item For each $n\in\N$, $\hat U_{-n} := \ove U_{-n} \setminus U_{1-n}$ is path connected.
\item We have $X = \bigcup_{n\in\N}U_{-n}$.
\end{enumerate}
\end{Lem}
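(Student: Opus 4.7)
The strategy is to extend the construction of $U_{\eps}$ from Proposition \ref{Prop:DefiningSequence} outward, producing full open sets of geometrically growing size that eventually exhaust $X$. Fix a basepoint $o \in E$, so $E \subset \overline B(o,1/2)$ since $\diam E \le 1/2$, and set radii $R_n = (8K)^n$ for $n \ge 1$. Define $V_{-n} = B(o,R_n)$, and by Lemma \ref{Lem:LargeComponent} let $\Omega_n$ be the unique unbounded component of $X \setminus V_{-n}$; define $U_{-n} = X \setminus \Omega_n$. Since $X$ is locally connected (as part of full connectedness) and $X \setminus V_{-n}$ is closed, $\Omega_n$ is closed, so $U_{-n}$ is open; it is full (its complement $\Omega_n$ is connected) and contains $V_{-n}$. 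Because $\partial U_{-n} \subset \partial V_{-n} \subset \{z : d(z,o) = R_n\}$, Lemma \ref{Lem:LargeComponent}(b) applied to $U_{-n}$ yields
\[ R_n \le \diam U_{-n} \le K\diam \partial U_{-n} \le 2KR_n. \]
The nesting $U_{-(n-1)} \subset U_{-n}$ follows since the unbounded component of $X \setminus V_{-n}$, being connected and unbounded, lies inside that of $X \setminus V_{-(n-1)}$. Furthermore, by Lemma \ref{Lem:LargeComponent}(a), every bounded component of $X \setminus V_{-n}$ has diameter less than $K\diam V_{-n} \le 2KR_n$, so $U_{-n} \subset B(o,(1+2K)R_n)$.

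The verification of (E1), (E2), and (E4) then reduces to direct computation exploiting $\xi = 12cK \ge 12K$. For the ratio in (E1),
\[ 4 = \frac{R_n}{2KR_{n-1}} \le \frac{\diam U_{-n}}{\diam U_{-(n-1)}} \le \frac{2KR_n}{R_{n-1}} = 16K^2 \le (2\xi)^2. \]
For the distance in (E1),
\[ \dist(U_{-(n-1)},\partial U_{-n}) \ge R_n - (1+2K)R_{n-1} = (6K-1)R_{n-1} \ge \xi^{-1}\diam U_{-(n-1)}, \]
since the right-hand side is at most $2KR_{n-1}/\xi = R_{n-1}/(6c)$. For (E2) with $n \ge 2$,
\[ \dist(\partial U_{1-n},E) \ge R_{n-1} - 1/2 \ge \xi^{-1}\diam U_{1-n}, \]
which holds once $R_{n-1}$ is bounded away from zero (guaranteed by $R_1 = 8K \ge 8$); the case $n = 1$ is exactly (D5') of Proposition \ref{Prop:DefiningSequence}. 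Property (E4) is immediate since $R_n \to \infty$ forces $x \in V_{-n} \subset U_{-n}$ for all $n$ large.

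Property (E3) follows exactly as in the proof of (D6) in Proposition \ref{Prop:DefiningSequence}: $X$ is fully connected, $U_{-n}$ is open and path-connected, $U_{-(n-1)}$ is a full subset of $U_{-n}$, and $U_{-(n-1)} \cap \partial U_{-n} = \emptyset$ by the preceding distance estimate, so the definition of full connectedness gives that $\overline{U_{-n}} \setminus U_{-(n-1)}$ is path-connected. The only real bookkeeping subtlety concerns the transition from $U_0 = U_{\eps}$ (built in Proposition \ref{Prop:DefiningSequence} from $B(o,1)$, with $\diam U_{\eps} \in [1,2K]$) to $U_{-1}$, since $U_0$ does not arise from the $R_n$-recursion. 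The growth factor $8K$, combined with $\xi \ge 12K$, leaves enough slack to absorb this mismatch: replacing $R_{n-1}$ with $\diam U_0$ and $R_n$ with $R_1 = 8K$ in the computations above, all five properties verify for the pair $(U_0,U_{-1})$ by the same arithmetic, with constants to spare.
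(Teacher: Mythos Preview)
Your proof is correct and takes a genuinely different route from the paper's. The paper constructs the exhaustion \emph{inductively}: having built $U_{-n}$, it sets $V_{-n-1} = N(U_{-n},\,4\xi\diam U_{-n})$ and then fills in the bounded complementary components to obtain $U_{-n-1}$. You instead fix a basepoint $o\in E$ once and for all and take $V_{-n}=B(o,(8K)^n)$, defining every $U_{-n}$ simultaneously. Both approaches produce full open sets via the ``fill in the bounded holes'' mechanism of Lemma~\ref{Lem:LargeComponent}, and both verify (E3) by the same appeal to full connectedness used in the proof of (D6).

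The trade-off: the paper's inductive definition makes the step from $U_0=U_\varepsilon$ to $U_{-1}$ automatic (it is just the first step of the recursion), whereas your direct construction requires the separate bookkeeping paragraph at the end to absorb the mismatch between $U_\varepsilon$ (built from $B(o,1)$) and your recursion (built from $B(o,8K)$). Conversely, your approach avoids an induction altogether and makes the diameter and distance estimates completely explicit arithmetic in $K$ and $R_n$, which is arguably cleaner. Two small points worth tightening if you revise: the closedness of $\Omega_n$ does not need local connectedness (components of a closed set are always closed), and you silently use that $U_{-n}$ is connected---this follows because $V_{-n}$ is connected and every bounded component $W$ of $X\setminus V_{-n}$ is closed with $\overline{W}\cap\overline{V_{-n}}\ne\emptyset$, but it deserves a sentence.
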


\begin{proof}
Set $U_0=U_{\varepsilon}$. Properties (E1)--(E3) are vacuously true. Assume that for some integer $n\geq 0$, we have defined a full set $U_{-n}$ such that (E1)--(E3) hold for $n$. Let $V_{-n - 1}$ be the $(4\xi\diam U_{-n})$-neighborhood of $U_{-n}$.
By Lemma \ref{Lem:LargeComponent}, there is a unique unbounded component of $X \setminus V_{-n - 1}$; denote the complement of this unbounded component by $U_{-n - 1}$.
Clearly $U_{-n - 1}$ is connected and has connected complement, so it is full.
Moreover, recalling that $\xi = 12cK$ in the proof of Proposition \ref{Prop:DefiningSequence},
\[ (2\xi)^{-2}\diam U_{-n - 1} \le \diam U_{-n} \le 2^{-2}\diam U_{-n - 1} \]
and 
\[ \dist(\dee U_{-n - 1},E) \ge \dist(\dee U_{-n - 1},U_{-n}) \ge 4\xi\diam U_{-n} \ge \xi^{-1}\diam U_{-n - 1}.\]
Lastly, since $X$ is fully connected and $U_{-n - 1}$ is a connected set containing a neighborhood of the full set $U_{-n}$, we have $\hat U_{-n - 1}$ is connected and we obtain (E1)--(E3) for $n+1$.

Proceeding inductively, we define sets $\{U_{-n}\}_{n\in\N}$. Given an integer $n\geq0$, by (E1),
\[ \diam{U_{0}} + \cdots + \diam{U_{-n}} \geq \diam{U_{\varepsilon}} (1+\cdots+4^n) \geq 4^n\diam{U_{\varepsilon}}.\]
Fix $o\in U_{\varepsilon}$. Then for all integers $n\geq 0$, $B(o, 4^n\diam{U_{\varepsilon}}) \subset U_{-n-1}$ and (E4) follows.
\end{proof}

We now proceed to the proof of Theorem \ref{Thm:Uniformity}. To simplify the exposition, when we write \texttt{data}, we mean the constants $C_0$, $Q$, $C$, and $c$.

We also extend the word-parent function by setting $\eps^{\uparrow} = -1$ and by setting $(-n)^{\uparrow} = -n - 1$ for each $n\in\N$. We also extend the word-length function by setting $|-n| = -n$ for all $n\in\N$. Finally, we set $\W' = \{-n : n \in \N\} \cup \W$.

\begin{proof}[{Proof of Theorem \ref{Thm:Uniformity}}]
Fix $x,y \in X \setminus E$. There exists a maximal $u \in \W'$ (with respect to $|\cdot|$) such that $x,y \in U_u$. Further, there exist $v,w \in \W'$ such that $x \in \hat U_v$ and $y \in \hat U_w$. Clearly $|u| \le \min\{|v|,|w|\}$. Without loss of generality, we may assume $|v| \le |w|$. There are a few cases to consider.

\emph{Case 1.} Suppose that $d(x,y) < (2\xi)^{-5}\diam U_u$.
We claim that $|v| \le |u| + 1$ and $|w| \le |u| + 1$.
To see this, suppose for a contradiction that $|w| - |u| > 1$.
Let $u_1,u_2 \in \W'$ such that $u_1^{\uparrow} = u$, $u_2^{\uparrow} = u_1$, and $y \in U_{u_2}$.
By Proposition \ref{Prop:DefiningSequence}(D4') and Lemma \ref{Lem:Exhaustion}(E1),
\[ d(x,y) \ge \dist(\dee U_{u_1},U_{u_2}) \ge \xi^{-1}\diam U_{u_2} \ge 2^{-2}\xi^{-3}\diam U_{u_1} \ge 2^{-4}\xi^{-5}\diam U_u, \]
which contradicts the assumption $d(x,y) < (2\xi)^{-5}\diam U_u$.

Let $\Gamma$ be a geodesic joining $x$ to $y$.
Clearly $\length(\Gamma) = d(x,y)$.
By Proposition \ref{Prop:DefiningSequence}(D5'), Lemma \ref{Lem:Exhaustion}(E2), and since $|w| \le |u| + 1$, for any $z \in \Gamma$ and $w_0 \in \W'$ with $w_0^{\uparrow} = w$,
\begin{align*}
\dist(z,E) & \ge \dist(\dee U_{w_0},E) - \length(\Gamma) \\
& \ge \xi^{-1}\diam U_{w_0} - d(x,y) \\
& > 2^{-2}\xi^{-3}\diam U_w - (2\xi)^{-5}\diam U_u \\
& \ge (2\xi)^{-5}\diam U_u \\
& > d(x,y) \\
& = \length(\Gamma) \\
& \ge \dist(z,\{x,y\}).
\end{align*}

\emph{Case 2.} Suppose that $d(x,y) \ge (2\xi)^{-5}\diam U_u$ and $u = v = w$.
By Lemma \ref{Lem:UniformNeighborhood}, there is a curve $\Gamma$ joining $x$ and $y$ lying in the neighborhood $N(\hat U_u,(2\xi)^{-3}\diam U_u)$ with length at most $c_0d(x,y)$, where $c_0(\texttt{data}) \ge 1$. Then 
\[ d(x,y) \le \length(\Gamma) \le c_0d(x,y) \le c_0\diam U_u\] 
and for any $z \in \Gamma$ and $u_0 \in \W'$ such that $u_0^{\uparrow} = u$, by Proposition \ref{Prop:DefiningSequence}(D4',D5') and Lemma \ref{Lem:Exhaustion}(E1,E2),
\[ \dist(z,E) \ge \dist(\dee U_{u_0},E) - (2\xi)^{-3}\diam U_u \ge \xi^{-1}\diam U_{u_0} - (2\xi)^{-3}\diam U_u \ge (2\xi)^{-3}\diam U_u. \]

\emph{Case 3.} Suppose that $d(x,y) \ge (2\xi)^{-5}\diam U_u$ and $u = v \ne w$.
For each $i \in \{1,\ldots,|w| - |u|\}$, let $w_i \in \W'$ such that $w_1 = w$, $w_i^{\uparrow} = w_{i + 1}$, and $w_{|w| - |u|} = u$.
Let $z_i \in \dee U_{w_i}$ for each $i \in \{1,\ldots,|w| - |u| - 1\}$.
By Case 2, there exists a curve $\gamma_i$ in $N(\hat U_{w_i},(2\xi)^{-3}\diam U_{w_i})$ joining $z_{i - 1}$ and $z_i$ with length at most $c_0d(z_{i - 1},z_i)$ for each $i \in \{2,\ldots,|w| - |u| - 2\}$, there exists a curve $\gamma_1$ in $N(\hat U_w,(2\xi)^{-3}\diam U_w)$ joining $x$ and $z_1$ with length at most $c_0d(x,z_1)$, and there exists a curve $\gamma_{|w| - |u|}$ in $N(\hat U_u,(2\xi)^{-3}\diam U_u)$ joining $z_{|w| - |u| - 1}$ to $y$ with length at most $c_0d(z_{|w| - |u| - 1},y)$.
Define $\Gamma$ to be the concatenation of these curves, which joins $x$ to $y$.
By Proposition \ref{Prop:DefiningSequence}(D4') and Lemma \ref{Lem:Exhaustion}(E1),
\begin{align*}
\length(\Gamma) & = \sum_{i = 1}^{|w| - |u|} \length(\gamma_i) \\
& \lsim_{c_0} d(x,z_1) + d(z_{|w| - |u| - 1},y) + \sum_{i = 2}^{|w| - |u| - 1} d(z_{i - 1},z_i) \\
& \le \diam U_w + \diam U_u + \sum_{i = 2}^{|w| - |u| - 1} \diam U_{w_i} \\
& < 2\diam U_u \\
& \le 2^6\xi^5d(x,y).
\end{align*}
Let now $z \in \Gamma$ and suppose $z \in \gamma_i$ for some $i \in \{2,\ldots,|w| - |u| - 1\}$ (the cases where $z \in \gamma_1$ or $z \in \gamma_{|w| - |u|}$ are similar).
Since $\gamma_i$ lies in $N(\hat U_{w_i},(2\xi)^{-3}\diam U_{w_i})$, by Proposition \ref{Prop:DefiningSequence}(D4',D5') and Lemma \ref{Lem:Exhaustion}(E1,E2), for any $v_0 \in \W'$ such that $v_0^{\uparrow} = w_i$,
\begin{align*}
\dist(z,E) & \ge \dist(\dee U_{v_0},E) - (2\xi)^{-3}\diam U_{w_i} \\
& \ge \xi^{-1}\diam U_{v_0} - (2\xi)^{-3}\diam U_{w_i} \\
& \ge (2\xi)^{-3}\diam U_{w_i},
\end{align*}
while
\[ \dist(z,\{x,y\}) \le \dist(\gamma_i,\{x,y\}) + \length(\gamma_i) \le \diam U_{w_i} + c_0d(z_{i - 1},z_i) \le (1 + c_0)\diam U_{w_i}. \]
\emph{Case 4.} Suppose that $d(x,y) \ge (2\xi)^{-5}\diam U_u$ and $u \ne v$.
The proof is a two-sided argument similar to that in Case 3.
Let $u_1,u_2 \in \W'$ such that $u_1^{\uparrow} = u_2^{\uparrow} = u$ and $x \in U_{u_1}$ and $y \in U_{u_2}$.
Fix $x' \in \dee U_{u_1}$ and $y' \in \dee U_{u_2}$.
Let $\gamma_1$ be the curve from Case 3 joining $x$ to $x'$, let $\gamma_2$ be the curve from Case 3 joining $y'$ to $y$, and let $\gamma_0$ be the curve in $N(\hat U_u,(2\xi)^{-3}\diam U_u)$ from Case 2 joining $x'$ to $y'$.
Let $\Gamma$ be the concatenation of the curves $\gamma_0$, $\gamma_1$, and $\gamma_2$.
We have
\[ \length(\Gamma) = \length(\gamma_0) + \length(\gamma_1) + \length(\gamma_2) \lsim_{c_0,\xi} \diam U_u \lsim_{\xi} d(x,y). \]
For a point $z \in \Gamma$, we have $\dist(z,\{x,y\}) \lsim_{c_0,\xi} \dist(z,E)$ from Cases 2 and 3.
\end{proof}

\section{Preparation for the proofs of Theorem \ref{thm:main1} and Theorem \ref{Thm:main2}} \label{Sec:Quasisymmetric1}

Fix, for the rest of the paper, a metric measure space $(X,d,\mu)$ which is complete, fully connected, $(C_0,Q)$-regular, and supports a $(Q-1,C)$-Poincar\'e inequality for some $C_0,C>1$, and $Q>2$. By Lemmas \ref{Lem:Quasiconvex} and \ref{Lem:Preserve} we may assume that $(X,d,\mu)$ is geodesic. Fix also a totally disconnected set $E\subset X$ and a number $c\geq 1$. Let $\xi \ge 1$ be the constant from Proposition \ref{Prop:DefiningSequence} associated to the number $c$. Set $\delta = (4\xi)^{-1}$, and let $N\in\N$, $\W \subset \N^*$, and $\{U_w : w \in \W\}$ be as in Proposition \ref{Prop:DefiningSequence} associated to the set $E$ and the number $\delta$. 

To simplify the exposition, we make the assumption that the only word in $\W$ with length one is $1$ (so $E$ is completely contained in $U_1$). This can easily be arranged by rescaling the metric. 
The use of this assumption is that given a word $w \in \W$, we construct curves that lie in a neighborhood of the children of $U_w$, and join the boundaries of the grandchildren of $U_w$. Since we look two generations ahead, it is useful to have $U_{\eps}$ and $U_1$ be ``extraneous'' so that we do not have to consider these cases separately.

\subsection{Basic notation}
We establish some notation which we use throughout the rest of the paper. 
\begin{itemize}
\item For each $w\in\W$, by Proposition \ref{Prop:DefiningSequence}, there exists $n_w \in \N$ such that $wi \in \mathcal{W}$ if and only if $i \in \{1,\dots,n_w\}$.
\item For each $w \in \W$, define $\displaystyle \hat U_w = U_w \setminus \bigcup_{i = 1}^{n_w} U_{wi}$.
\item For each $w \in \W$, define 
\begin{align*}
\e_w = (4\diam{U_w})^{-1}\min \Bigg\{\dist(U_{w}, \partial U_{w^{\uparrow}}), & \min_{i=1,\dots,n_w}\dist(\partial U_w, U_{wi}), \\
& \min_{\substack{i=1,\dots,n_w\\ j=1,\dots,n_{wi}}} \dist(\partial U_{wi},U_{wij}),\min_{i = 1,\ldots,n_w} \diam U_{wi} \Bigg\}
\end{align*}
with the understanding that $\dist(\partial U_{w}, \partial U_{w^{\uparrow}})$ is dropped if $w =\varepsilon$. If $E$ is $c$-uniformly disconnected, then we set $\e_w = (4\xi)^{-3}$.
\item For each $w \in \W$, define $\mathcal{N}_w = N(\hat U_w,\e_w\diam{U_w})$.
\item For each $w\in \W\setminus \{\varepsilon\}$, define
\[ \lambda_w = \min\left\{ \max_{i,j = 1,\dots,n_w}\frac{\diam{U_{wi}}}{\diam{U_{wj}}}, \max_{i = 1,\dots,n_w} \frac{\diam{U_{wi}}}{\dist(\partial U_{wi},E)} \right\}.\]
If $E$ is $c$-uniformly disconnected, then set $\lambda_w = \xi$.
\end{itemize}

Note that if $E$ is $c$-uniformly disconnected, and if $w\neq \varepsilon$, then a simple computation involving \eqref{eq:diamUwi} and \eqref{eq:distUw-Uwi} shows that $\e_w \geq (4\xi)^{-3}$ and $\lambda_w \geq \xi$, which justifies the choices $\e_w = (4\xi)^{-3}$ and $\lambda_w=\xi$.

\begin{Lem}\label{lem:distofhats}
Let $w,u \in \W$ be two distinct words that do not have a common parent and one is not the parent of the other. Then
\[ \dist(\mathcal{N}_{w}, \mathcal{N}_{u}) \geq \e_{w}\diam{U_{w}}+\e_{u}\diam{U_{u}}.\]
\end{Lem}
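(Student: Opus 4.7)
The plan is to fix arbitrary $x \in \mathcal{N}_w$ and $y \in \mathcal{N}_u$, choose $x_0 \in \hat{U}_w$ and $y_0 \in \hat{U}_u$ with $d(x, x_0) < \e_w \diam U_w$ and $d(y, y_0) < \e_u \diam U_u$, and reduce the lemma to proving
\[ d(x_0, y_0) \geq 2 \bigl(\e_w \diam U_w + \e_u \diam U_u\bigr), \]
since the triangle inequality would then give $d(x, y) > \e_w \diam U_w + \e_u \diam U_u$, and infimizing over $x, y$ yields the conclusion. The key structural inputs are that $\hat{U}_u$ is disjoint from every child $U_{ui}$ of $U_u$ and the four separation estimates encoded in $\e_w$'s definition — bounding, respectively, the distance from $U_w$ to $\partial U_{w^{\uparrow}}$, from $\partial U_w$ to children $U_{wi}$, from child boundaries $\partial U_{wi}$ to grandchildren $U_{wij}$, and the smallest child diameter. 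I then distinguish cases based on the combinatorial relation between $w$ and $u$ in $\mathcal{W}$.

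In Case 1 (one of $w, u$ is a proper descendant of the other with depth gap at least two), say $u$ is an ancestor of $w$ with $w = u k_1 k_2 \cdots k_m$ and $m \geq 2$. Since $\hat{U}_u \cap U_{u k_1} = \emptyset$, $y_0$ lies outside the open set $U_{u k_1}$, while $x_0 \in U_w \subseteq U_{u k_1 k_2} \subseteq U_{u k_1}$, giving $d(x_0, y_0) \geq \dist(U_w, \partial U_{u k_1})$. I bound this below two independent ways: the first term of the $\e_w$-definition yields $\dist(U_w, \partial U_{u k_1}) \geq \dist(U_w, \partial U_{w^{\uparrow}}) \geq 4 \e_w \diam U_w$, while the third term of the $\e_u$-definition (applied to the child $u k_1$ and grandchild $u k_1 k_2$) yields $\dist(U_w, \partial U_{u k_1}) \geq \dist(U_{u k_1 k_2}, \partial U_{u k_1}) \geq 4 \e_u \diam U_u$. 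Since $4 \max(a, b) \geq 2(a+b)$, the target inequality follows. Case 2a — in which $w$ and $u$ share a longest common ancestor $v$, write $w = v i w'$ and $u = v j u'$ with $i \neq j$, and both $|w'|, |u'| \geq 1$ — is handled by a symmetric variant of the same mechanism: since $U_{w^{\uparrow}} \subseteq U_{vi}$, $U_{u^{\uparrow}} \subseteq U_{vj}$, and $U_{vi} \cap U_{vj} = \emptyset$ by (D3), applying the first term of $\e_w$ and of $\e_u$ produces $d(x_0, y_0) \geq 4\e_w\diam U_w$ and $d(x_0, y_0) \geq 4\e_u \diam U_u$, and the same combining step concludes.

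The remaining and most delicate situation, which I expect to be the main obstacle, is Case 2b: WLOG $w = vi$ is a direct child of $v$ and $u$ is a strict descendant of $vj$, so $y_0 \in U_u \subseteq U_{v j k}$ for some $k$. Here $d(x_0, y_0) \geq \dist(y_0, \partial U_{vj}) \geq \dist(U_{vjk}, \partial U_{vj}) \geq 4 \e_v \diam U_v$ by the third term of $\e_v$, and $d(x_0, y_0) \geq \dist(U_u, \partial U_{u^{\uparrow}}) \geq 4 \e_u \diam U_u$ by the first term of $\e_u$ (using $x_0 \notin U_{u^{\uparrow}} \subseteq U_{vj}$), which together give $d(x_0, y_0) \geq 2(\e_v \diam U_v + \e_u \diam U_u)$. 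The remaining task — and the technical crux — is then to upgrade this to the desired bound, i.e.\ to compare $\e_v \diam U_v$ with $\e_w \diam U_w = \e_{vi} \diam U_{vi}$. In the uniformly disconnected setting this is immediate since $\e_v = \e_w = (4 \xi)^{-3}$ and $\diam U_v \geq 4 \diam U_{vi}$ by (D4'); in the general (merely totally disconnected) setting the analogous comparison must be extracted by unwinding the four-term minima defining $\e_v$ and $\e_{vi}$ and invoking the scale relations inherent to the inductive construction of Proposition \ref{Prop:DefiningSequence}.
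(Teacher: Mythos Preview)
Your Cases 1 and 2a are correct, and the reduction to bounding $d(x_0,y_0)$ from below by $2(\e_w\diam U_w+\e_u\diam U_u)$ is the right setup. The genuine gap is Case~2b, and your proposed fix there does not work: there is no inequality $\e_v\diam U_v\ge \e_w\diam U_w$ in general, because the four terms defining $\e_v$ involve $U_v$, its children, and its grandchildren, while those defining $\e_{vi}$ involve $U_{vi}$, \emph{its} children, and \emph{its} grandchildren --- the two minima are taken over disjoint families of distances and are not comparable. More seriously, the lemma itself appears to fail in this case. Take $v$ with two children $U_{vi},U_{vj}$ that are unit balls in $\R^3$ at distance $0.01$ from each other; give $U_{vi}$ a single well-centered child and grandchild so that $4\e_{vi}\diam U_{vi}=0.3$, hence $\e_w\diam U_w=0.075$; and take $u=vj1$ to be a small ball with $\dist(\partial U_{vj},U_{vj1})=0.05$ sitting on the side of $U_{vj}$ facing $U_{vi}$. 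Then $\hat U_w$ contains points on $\partial U_{vi}$ at distance $\approx 0.01$ from $U_{vj}$, and $\hat U_u$ contains points at distance $\approx 0.05$ inside $\partial U_{vj}$, so $\dist(\hat U_w,\hat U_u)\approx 0.06$. Since $\e_w\diam U_w=0.075>0.06$, the neighborhoods $\mathcal N_w$ and $\mathcal N_u$ actually overlap, giving $\dist(\mathcal N_w,\mathcal N_u)=0$.

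The paper's own proof glosses over exactly this case: the displayed inequality $\dist(\hat U_w,\hat U_u)\ge\dist\bigl(\hat U_w,\,\partial U_{w^\uparrow}\cup\bigcup_{i,j}U_{wij}\bigr)$ requires that $\hat U_u$ lie either outside $U_{w^\uparrow}$ or inside some grandchild $U_{wij}$ of $w$, but when $w=vi$ and $u$ descends from the sibling $vj$, the set $\hat U_u\subset U_{vj}\subset U_{w^\uparrow}$ is inside $U_{w^\uparrow}$ yet meets no $U_{wij}=U_{vii'j'}$. Your instinct that the uniformly disconnected case is fine is correct: there $\e_w=(4\xi)^{-3}$ for all $w$ and the scale relations of (D4$'$) force $\dist(\partial U_{vj},U_{vjk})\gtrsim\diam U_{vi}$, so both your argument and the paper's go through. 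The general (merely totally disconnected) statement, however, needs either an extra hypothesis excluding the ``uncle--nephew'' configuration $w=vi$, $u=vjk\cdots$, or a genuinely different argument.
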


\begin{proof}
The assumptions on $w,u$ imply that
\[ \dist(\hat U_{w},\hat U_{u}) \geq \dist\left (\hat U_{w}, \partial U_{w^{\uparrow}}\cup \bigcup_{i=1}^{n_w}\bigcup_{j=1}^{n_{wi}}U_{wij}\right) \geq 4\e_w\diam{U_w}. \]
Therefore,
\begin{align*} 
\dist(\mathcal{N}_{w}, \mathcal{N}_{u}) &\geq \dist(\hat U_{w},\hat U_{u}) - \e_{w}\diam{U_{w}} - \e_{u}\diam{U_{u}} \geq \e_{w}\diam{U_{w}} + \e_{u}\diam{U_{u}}. \qedhere
\end{align*}
\end{proof}

\subsection{Entries and exits}
We define here for each $w\in\W$ an ``entry'' to $U_w$ and an ``exit''. The following lemma allows us to do so.

\begin{Lem}\label{Lem:Entry}
Given $n\in\N$ and $\lambda > 1$, there is a constant $\rho(n,C_0,Q,C,\lambda) \in (0,1)$ satisfying the following property. Let $W_1,\dots,W_n \subset X$ be mutually disjoint domains such that for each distinct $i,j \in \{1,\dots,n\}$, $\diam{W_i} \leq \lambda \diam{W_j}$ and for each $i \in \{1,\dots,n\}$ there exists a ball $B_i \subset W_i$ of radius $\lambda^{-1}\diam{W_i}$. Then for each $i \in \{1,\dots,n\}$ there exists $x_i \in \dee W_i$ such that if $i,j \in \{1,\dots,n\}$ are distinct, then $d(x_{i},x_{j}) \ge \rho\diam{W_{i}}$.
\end{Lem}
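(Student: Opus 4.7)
The plan is to select the points $x_1, \dots, x_n$ greedily, where at each step the $Q$-Loewner property (Lemma~\ref{Lem:Loewner}) and the annulus modulus estimate (Lemma~\ref{Lem:AnnulusModulus}) together guarantee that a new boundary point can be chosen far from those already selected. Write $D_i = \diam W_i$ and let $c_i$ denote the center of the ball $B_i \subset W_i$. Since the hypothesis is symmetric in $i,j$, we have $D_j \le \lambda D_i$ for all distinct $i,j$, so it suffices to find $x_i \in \partial W_i$ (for $i = 1,\dots,n$ in turn) satisfying $d(x_i, x_j) \ge \rho' D_i$ for all $j < i$, where $\rho' := \rho\lambda$; the stated conclusion then follows by symmetry. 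The base case $i = 1$ is trivial, so suppose inductively that $x_1, \dots, x_{i-1}$ are chosen and, toward contradiction, that $\partial W_i \subset U := \bigcup_{j < i} B(x_j, \rho' D_i)$.

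Consider the family $\Gamma$ of rectifiable curves joining $\overline{B_i}$ to $X \setminus B(c_i, 10 D_i)$. Since $c_i \in W_i$ and $\diam W_i = D_i$, we have $W_i \subset \overline{B(c_i, D_i)}$, so every $\gamma \in \Gamma$ must exit $W_i$ and hence cross $\partial W_i$. The relative distance $\Delta(\overline{B_i}, X \setminus B(c_i, 10 D_i))$ is bounded by a function of $\lambda$ (and the target set is nondegenerate because $X$ is unbounded), so Lemma~\ref{Lem:Loewner} yields
\[
  \Mod_Q(\Gamma) \ge \Phi_0
\]
for some constant $\Phi_0 = \Phi_0(C_0, Q, C, \lambda) > 0$.

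For the matching upper bound, let $S = \{j < i : B(x_j, \rho' D_i) \cap \partial W_i \ne \emptyset\}$. Because $\partial W_i \subset \overline{B(c_i, D_i)}$, each $j \in S$ satisfies $d(x_j, c_i) \le (1 + \rho') D_i \le 2 D_i$, provided $\rho' \le 1$. Put $\Gamma_j = \{\gamma \in \Gamma : \gamma \cap B(x_j, \rho' D_i) \ne \emptyset\}$. The assumed covering together with the fact that every $\gamma \in \Gamma$ crosses $\partial W_i$ gives the decomposition $\Gamma = \bigcup_{j \in S} \Gamma_j$. Any curve in $\Gamma_j$ contains a subcurve from $\overline{B(x_j, \rho' D_i)}$ to $X \setminus B(x_j, 8 D_i)$, so Lemma~\ref{Lem:AnnulusModulus} gives $\Mod_Q(\Gamma_j) \le C_1 \bigl(\log(8/\rho')\bigr)^{1-Q}$. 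Standard subadditivity of $Q$-modulus then yields
\[
  \Phi_0 \le \Mod_Q(\Gamma) \le (n-1)\,C_1 \bigl(\log(8/\rho')\bigr)^{1-Q}.
\]

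Since $Q > 1$, the right-hand side tends to $0$ as $\rho' \to 0^+$, so the inequality forces $\rho'$ to exceed an explicit positive constant depending only on $n, C_0, Q, C, \lambda$. Choosing $\rho$ smaller than $1/\lambda$ times this threshold at the outset yields the contradiction and completes the inductive step, giving the desired $\rho(n, C_0, Q, C, \lambda)$. I expect the main technical obstacle to be the scale-matching in the upper bound: one must confirm that each relevant $x_j$ lies within $2 D_i$ of $c_i$ so that the annulus bound genuinely applies between scales $\rho' D_i$ and $8 D_i$, and one must verify that every curve in $\Gamma$ truly crosses $\partial W_i$ inside $U$ so that the decomposition $\Gamma = \bigcup_{j \in S} \Gamma_j$ is valid.
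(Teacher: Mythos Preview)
Your argument is correct and follows essentially the same route as the paper: an induction on $i$, a Loewner lower bound for the family of curves crossing $\partial W_i$, and a subadditive annulus upper bound over the small balls $B(x_j,\rho' D_i)$; the paper phrases the inductive step constructively (finding a curve in $\Gamma\setminus\bigcup_j\Gamma_j$), while you phrase it by contradiction, but the content is identical. One small technical point: the Loewner estimate (Lemma~\ref{Lem:Loewner}) is stated for pairs of nondegenerate \emph{continua}, and $X\setminus B(c_i,10D_i)$ need not be connected; fix this by replacing it with, say, a geodesic segment of length $D_i$ lying outside $B(c_i,10D_i)$ (available since $X$ is geodesic and unbounded), which only strengthens the lower bound on $\Mod_Q(\Gamma)$.
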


\begin{proof}
The construction of constant $\rho$ and points $x_1,\dots,x_n$ is done in an inductive fashion. 

Fix $x_1 \in \dee W_1$ and let $\rho_1=1/2$. Suppose that for some $k \in \{1,\dots,n-1\}$, we have defined $ \rho_k \in (0,1)$ and points $x_i \in \dee W_i$ for $i \in \{1,\dots,k\}$ such that $d(x_i,x_j) \ge \rho_k\diam{W_i}$ for distinct $i,j \in \{1,\dots,k\}$. By the assumptions of the lemma, there exists $u \in W_{k+1}$ such that $B(u,\lambda^{-1}\diam{W}_{k+1}) \subset W_{k+1}$. Fix $v \in X$ such that $\dist(v,W_{k+1}) = \diam{W}_{k+1}$ and set $B = \ove B(u,(3\lambda)^{-1}\diam{W_{k+1}})$ and $B' = \ove B(v,(3\lambda)^{-1}\diam{W_{k+1}})$. Note that
\[ \Delta(B,B') \le \frac{d(u,v)}{\diam{B}} \le \frac{\diam{W_{k+1}} + \dist(v,W_{k+1})}{\diam{B}} \le 6\lambda, \]
so by Lemma \ref{Lem:Loewner} we have $\Mod_Q(B,B') \ge \Phi(6\lambda)$ where $\Phi$ is a decreasing homeomorphism of $(0,\infty)$ depending only on $C_0,Q$, and $C$.

Let $C_1(C_0,Q) \ge 1$ be the constant from Lemma \ref{Lem:AnnulusModulus}. Let $a_{k+1} \in (0,(3\lambda)^{-1})$ so that
\[ C_1\left(\log\left(\frac{\lambda^{-1}}{3a_{k+1}}\right)\right)^{1 - Q} \le \frac{\Phi(6\lambda)}{2k}. \]
For $i \in \{1,\dots,k\}$ define $B_i = \ove B(x_i,a_{k+1}\diam{W_{k+1}})$ and $B_i' = B(x_i,(3\lambda)^{-1}\diam{W_{k+1}})$. Let $\Gamma_0$ denote the family of curves joining $B$ and $B'$, and for each $i \in \{1,\dots,k\}$, let $\Gamma_i$ denote the family of curves in $\Gamma_0$ that intersect $B_i$. Note that for each $i\in \{1,\dots,k\}$,
$d(x_i,u) \geq \lambda^{-1}\diam{W_{k+1}}$.
Hence each curve $\gamma \in \Gamma_i$ has a subcurve which joins $B_i$ and $X \setminus B_i'$, so by Lemma \ref{Lem:AnnulusModulus},
\[ \Mod_Q(\Gamma_i) \le \Mod_Q(B_i,X \setminus B_i') \le C_1\left(\log\left(\frac{\lambda^{-1}}{3a_{k+1}}\right)\right)^{1 - Q} \le \frac{\Phi(6\lambda)}{2k}. \]
Therefore,
\[ \Mod_Q(\Gamma_0 \setminus (\Gamma_1 \cup \cdots \cup \Gamma_{k})) \ge \Mod_Q(\Gamma_0) - \sum_{i = 1}^{k} \Mod_Q(\Gamma_i) \ge \tfrac12\Phi(6\lambda). \]
Namely, $\Mod_Q(\Gamma_0 \setminus (\Gamma_1 \cup \cdots \cup \Gamma_{k})) > 0$, so there is a curve $\gamma$ which joins $B$ and $B'$ and does not intersect $B_i$ for any $i \in \{1,\dots,k\}$. Since $B \subset W_{k + 1}$ and $B' \cap W_{k + 1} = \emptyset$, there exists a point $x_{k + 1}$ where the curve $\gamma$ intersects $\dee W_{k + 1}$. Then $d(x_{k + 1},x_i) \ge a_{k + 1}\diam{W_{k+1}}$ for each $i \in \{1,\dots,k\}$. To complete the inductive step, set $\rho_{k+1} = \min \{\rho_k,\lambda^{-1}a_{k+1}\}$ and note that $d(x_{i},x_j) \ge \rho_{k+1}\diam{W_{i}}$ for all distinct $i,j \in \{1,\dots,k+1\}$.

Proceeding inductively, we find $\rho_n(C_0,Q,C,\lambda,n) \in (0,1)$ and points $x_i \in \dee W_i$ for each $i \in \{1,\dots,n\}$ such that $d(x_i,x_j) \ge \rho_n\diam{W_i}$ when $i \ne j$.
\end{proof}


If $x\in U_{wij}$ for some $i\in\{1,\dots,n_w\}$ and $j\in\{1,\dots,n_{wi}\}$, then $B(x,\lambda_{w}\diam{U_{wi}}) \subset U_{wi}$. Therefore, for each $w\in \W$, we can apply Lemma \ref{Lem:Entry} with $\lambda = \lambda_w$ and $W_i = U_{wi}$, $i=1,\dots,n_w$, and we obtain a constant $\rho_w \in (0,1)$ 
and points $x_{wi} \in \dee U_{wi}$, $i \in \{1,\dots,n_w\}$, such that 
\[ d(x_{wi},x_{wj}) \ge \rho_w\max\{\diam U_{wi},\diam{U_{wj}}\}, \qquad\text{for distinct $i,j \in \{1,\dots,n_w\}$.}\] 
For each $i \in \{1,\dots,n_w\}$, fix a point $y_{wi} \in \dee U_{wi}$ such that $d(x_{wi},y_{wi}) = \frac13\rho_w\diam U_{wi}$ and note that this implies $d(x_{wi},y_{wj}) \ge \frac13\rho_w\diam U_{wi}$ and $d(y_{wi},y_{wj}) \ge \frac13\rho_w\diam U_{wi}$ for distinct $i,j \in \{1,\dots,n_w\}$. 

\begin{Rem}\label{rem:rho}
If $E$ is $c$-uniformly disconnected, then $\rho_w$ is independent of $w$ and only depends on $C_0$, $Q$, $C$, and $c$.
\end{Rem}

Lastly, for each $w\in\W$ and $i\in\{1,\dots,n_w\}$, define 
\[ X_{wi} = B(x_{wi},\tfrac1{12}\e_{wi}\rho_{w}\diam U_{wi})\qquad\text{and}\qquad Y_{wi} = B(y_{wi},\tfrac1{12}\e_{wi}\rho_{w}\diam U_{wi})\] 
We view $X_{wi}$ as the ``entry'' of the set $U_{wi}$ and we view $Y_{wi}$ as the ``exit'' of the set $U_{wi}$. 

\begin{Lem}
Let $w,u \in \W$, $i\in\{1,\dots,n_w\}$, and $j\in\{1,\dots,n_u\}$. Then
\begin{equation}\label{eq:distXwiYwj} 
\dist(X_{wi},Y_{uj})\geq \tfrac1{12}(\rho_w\e_{wi}\diam{U_{wi}}+\rho_u\e_{uj}\diam{U_{uj}}).
\end{equation}
Moreover, if $wi \neq uj$, then
\begin{equation}\label{eq:distXwiXwj} 
\min\{\dist(X_{wi},X_{uj}),\dist(Y_{wi},Y_{uj})\}  \geq  \tfrac1{12}(\rho_w\e_{wi}\diam{U_{wi}}+\rho_u\e_{uj}\diam{U_{uj}}).
\end{equation}
\end{Lem}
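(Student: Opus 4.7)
The plan is to perform a case analysis on the tree relationship between the two words $wi$ and $uj$ in $\W$. In each case, we first establish a lower bound on $d(x_{wi},y_{uj})$ (and analogously on $d(x_{wi},x_{uj})$ and $d(y_{wi},y_{uj})$ for the ``moreover'' part) and then subtract the ball radii $r_X=\tfrac{1}{12}\e_{wi}\rho_w\diam U_{wi}$ and $r_Y=\tfrac{1}{12}\e_{uj}\rho_u\diam U_{uj}$. Since $\e_{wi},\e_{uj}\le 1/4$ by their definitions, each radius is at most $\tfrac{1}{48}\diam U_{wi}$ or $\tfrac{1}{48}\diam U_{uj}$, so the subtraction absorbs only a small fraction of each lower bound obtained below.

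If $wi=uj$, the construction gives $d(x_{wi},y_{wi})=\tfrac{1}{3}\rho_w\diam U_{wi}$, which comfortably dominates $2r_X$. If $w=u$ and $i\neq j$, combining the Entry lemma bound $d(x_{wi},x_{wj})\geq \rho_w\max\{\diam U_{wi},\diam U_{wj}\}$ with the identities $d(x_{wi},y_{wi})=\tfrac{1}{3}\rho_w\diam U_{wi}$ and $d(x_{wj},y_{wj})=\tfrac{1}{3}\rho_w\diam U_{wj}$ via the triangle inequality yields that $d(x_{wi},y_{wj})$, $d(x_{wi},x_{wj})$, and $d(y_{wi},y_{wj})$ are each at least $\tfrac{2}{3}\rho_w\max\{\diam U_{wi},\diam U_{wj}\}\geq \tfrac{1}{3}\rho_w(\diam U_{wi}+\diam U_{wj})$, which again dominates $r_X+r_Y$.

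If one of $wi,uj$ is the parent of the other---say $u=wi$, so $uj$ is a child of $wi$---then by the definition of $\e_{wi}$ we have $\dist(\dee U_{wi},U_{uj})\geq 4\e_{wi}\diam U_{wi}$, and since $(uj)^{\uparrow}=wi$, the definition of $\e_{uj}$ gives $\dist(U_{uj},\dee U_{wi})\geq 4\e_{uj}\diam U_{uj}$. Since $x_{wi},y_{wi}\in\dee U_{wi}$ and $x_{uj},y_{uj}\in\ove{U_{uj}}$, each of the four relevant distances is bounded below by $\dist(\dee U_{wi},\ove{U_{uj}})=\dist(\dee U_{wi},U_{uj})\geq \max\{4\e_{wi}\diam U_{wi},4\e_{uj}\diam U_{uj}\}\geq 2(\e_{wi}\diam U_{wi}+\e_{uj}\diam U_{uj})$, which swallows $r_X+r_Y$. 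The case $w=uj$ is symmetric.

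In any remaining case, $wi$ and $uj$ are distinct, $w\neq u$ (so they do not share a parent in $\W$), and neither is the parent of the other; these are exactly the hypotheses of Lemma \ref{lem:distofhats}, now applied to the pair $(wi,uj)$ rather than to $(w,u)$. It gives $\dist(\mathcal{N}_{wi},\mathcal{N}_{uj})\geq \e_{wi}\diam U_{wi}+\e_{uj}\diam U_{uj}$. Since $x_{wi},y_{wi}\in\dee U_{wi}\subseteq\hat U_{wi}$ and $r_X\leq \e_{wi}\diam U_{wi}$, we have $X_{wi},Y_{wi}\subseteq\mathcal{N}_{wi}$, and analogous inclusions give $X_{uj},Y_{uj}\subseteq\mathcal{N}_{uj}$, from which the desired lower bound is immediate. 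The conceptual crux---and the step that motivates the exact form of the bound---is the realization that Lemma \ref{lem:distofhats} can be applied directly to the children $wi$ and $uj$; this reduces the proof to the three ``close-in-the-tree'' configurations above, which admit elementary estimates.
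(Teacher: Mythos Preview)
Your proof is correct and follows essentially the same approach as the paper: a case analysis based on the relative position of $wi$ and $uj$ in the tree $\W$, invoking the Entry-lemma bounds when $w=u$ and Lemma~\ref{lem:distofhats} otherwise. Your case analysis is in fact more careful than the paper's: the paper lumps all situations with $w\neq u$ into a single case and cites Lemma~\ref{lem:distofhats}, whereas that lemma does not literally apply when one of $wi,uj$ is the parent of the other (e.g.\ $u=wi$); you correctly separate out this parent case and handle it directly via the definition of $\e_{wi}$ and $\e_{uj}$.

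One minor arithmetic slip: in the case $w=u$, $i\neq j$, your claimed bound $d(y_{wi},y_{wj})\ge \tfrac{2}{3}\rho_w\max\{\diam U_{wi},\diam U_{wj}\}$ is too strong. The triangle inequality gives only
\[
d(y_{wi},y_{wj}) \ge d(x_{wi},x_{wj}) - d(x_{wi},y_{wi}) - d(x_{wj},y_{wj}) \ge \rho_w M - \tfrac{1}{3}\rho_w\diam U_{wi} - \tfrac{1}{3}\rho_w\diam U_{wj} \ge \tfrac{1}{3}\rho_w M,
\]
where $M=\max\{\diam U_{wi},\diam U_{wj}\}$. (The paper records exactly this $\tfrac{1}{3}\rho_w$ bound right after defining the $y_{wi}$.) This weaker bound still suffices: $\tfrac{1}{3}\rho_w M \ge \tfrac{1}{6}\rho_w(\diam U_{wi}+\diam U_{wj})$, and subtracting the two radii (each at most $\tfrac{1}{12}\rho_w\diam U_{wi}$, $\tfrac{1}{12}\rho_w\diam U_{wj}$ since $\e\le 1$) still leaves the required $\tfrac{1}{12}$ bound.
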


\begin{proof}
We distinguish two cases. If $w\neq u$, then by Lemma \ref{lem:distofhats} and the fact that $\rho_{w},\rho_{u} \in (0,1)$,
\begin{align*}
\dist(X_{wi},X_{uj}) &\geq d(x_{wi},x_{uj}) - \tfrac1{12}\left( \e_{wi}\rho_{w}\diam{U_{wi}} + \e_{uj}\rho_{u}\diam{U_{uj}}\right)\\
&\geq \dist(\dee U_{wi},\dee U_{uj}) - \tfrac1{12}\left( \e_{wi}\rho_{w}\diam{U_{wi}} + \e_{uj}\rho_{u}\diam{U_{uj}}\right)\\
&\geq \tfrac12 (\e_{wi}\diam{U_{wi}} + \e_{uj}\diam{U_{uj}}).
\end{align*}
The same argument works for $\dist(Y_{wi},Y_{uj})$ and for $\dist(X_{wi},Y_{uj})$.

If $w=u$, then recalling that $\e_{wi},\e_{wj} \in (0,1)$,
\begin{align*} 
\dist(X_{wi},X_{wj}) &\geq d(x_{wi},x_{wj}) - \tfrac1{12}\left( \e_{wi}\rho_{w}\diam{U_{wi}} + \e_{wj}\rho_{w}\diam{U_{wj}}\right)\\
&\geq \tfrac1{2}\left( \rho_{w}\diam{U_{wi}} + \rho_{w}\diam{U_{wj}}\right) - \tfrac1{12}\left( \e_{wi}\rho_{w}\diam{U_{wi}} + \e_{wj}\rho_{w}\diam{U_{wj}}\right)\\
&\geq \tfrac1{4}\left( \rho_{w}\diam{U_{wi}} + \rho_{w}\diam{U_{w}}\right).
\end{align*}
A similar argument works for $\dist(Y_{wi},Y_{uj})$ and for $\dist(X_{wi},Y_{uj})$.
\end{proof}

\subsection{Decomposition of the unit interval}

In the next lemma we define a collection of closed intervals $\{J_w : w\in\W\setminus\{\varepsilon\}\}$ in $[0,1]$ which form a defining sequence for some compact totally disconnected subset of $[0,1]$.
Recall our assumption that the only word in $\W$ of length one is 1.

\begin{Lem}\label{Lem:Decomposition}
There exists a family of closed intervals $\{J_w : w \in \W\setminus \{\varepsilon\}\}$ which satisfy the following properties.
\begin{enumerate}[(a)]
\item We have $J_1=[0,1]$.
\item For each $w \in \W\setminus \{\varepsilon\}$ and $i \in \{1,\dots,n_w\}$, $J_{wi} \subset J_w$ and $|J_{wi}| = (2n_w + 1)^{-1}|J_w|$.
\item For each $w \in \W\setminus \{\varepsilon\}$ and $i \in \{1,\dots,n_w-1\}$, $\dist(J_{wi},J_{w(i + 1)}) = (2n_w + 1)^{-1}|J_w|$ with the right endpoint of $J_{wi}$ lying to the left of the left endpoint of $J_{w(i+1)}$.
\item For each $w \in \W\setminus \{\varepsilon\}$ and $i \in \{1,n_w\}$, $\dist(J_{wi},\R \setminus J_w) = (2n_w + 1)^{-1}|J_w|$.
\end{enumerate}
\end{Lem}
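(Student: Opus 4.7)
The plan is to construct the intervals $J_w$ by induction on $|w|$, recursively subdividing each parent interval into $2n_w + 1$ equal pieces and taking the odd-numbered pieces (counting from the left) as gaps and the even-numbered pieces as the child intervals.

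First I would set $J_1 = [0,1]$, which is forced by property (a), and observe that since the only word of length $1$ in $\W$ is $1$, this serves as the base case of the induction. For the inductive step, suppose $J_w = [\ell_w, \ell_w + |J_w|]$ has been defined for some $w \in \W \setminus \{\varepsilon\}$ with $n_w \geq 1$. Set $h_w = |J_w|/(2n_w + 1)$ and define, for each $i \in \{1,\dots,n_w\}$,
\[ J_{wi} = [\ell_w + (2i - 1)h_w,\, \ell_w + 2i\, h_w]. \]
This produces $n_w$ disjoint closed intervals sitting inside $J_w$, separated from each other and from the endpoints of $J_w$ by gaps of length exactly $h_w$.

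Verifying the four properties is then a direct computation from this formula: $|J_{wi}| = h_w = (2n_w + 1)^{-1}|J_w|$ and $J_{wi} \subset J_w$ since $(2i-1)h_w \geq h_w > 0$ and $2i\, h_w \leq 2n_w\, h_w < |J_w|$, giving (b); the right endpoint of $J_{wi}$ is $\ell_w + 2i\, h_w$ and the left endpoint of $J_{w(i+1)}$ is $\ell_w + (2i+1)h_w$, so their distance is $h_w = (2n_w + 1)^{-1}|J_w|$, giving (c); and $\dist(J_{w1}, \R \setminus J_w) = h_w$ since $J_{w1}$ starts $h_w$ units to the right of $\ell_w$, while $\dist(J_{w n_w}, \R \setminus J_w) = |J_w| - 2n_w h_w = h_w$, giving (d). Iterating this construction over all $w \in \W$ produces the desired family.

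There is no real obstacle here: the entire lemma is a self-contained combinatorial construction of a nested family of intervals in $[0,1]$, completely independent of the metric measure space $X$ and of any of the geometric objects $U_w$, $\hat U_w$, $X_{wi}$, $Y_{wi}$ defined earlier. The only point worth emphasizing is that the formula automatically makes the children $J_{w1}, \dots, J_{wn_w}$ appear in left-to-right order within $J_w$, which is what property (c) requires. The intervals $J_w$ will later serve as the parameter subintervals on which the bi-Lipschitz curves constructed in Section \ref{Sec:Quasisymmetric2} are to be reparametrized, and the uniform proportionality $|J_{wi}| = (2n_w + 1)^{-1}|J_w|$ will allow us, under uniform disconnectedness, to match the metric decay $\diam U_{wi} \asymp \delta \diam U_w$ given by \eqref{eq:diamUwi} (using that $n_w \leq N(C_0, \delta, Q)$ is bounded), which in turn is what enables the concatenation to be quasisymmetric.
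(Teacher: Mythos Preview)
Your proposal is correct and follows exactly the same approach as the paper: define $J_1=[0,1]$, then recursively set $J_{wi}=[\ell_w+(2i-1)h_w,\ell_w+2i\,h_w]$ with $h_w=|J_w|/(2n_w+1)$, and verify (a)--(d) by direct computation. The paper's proof is in fact terser than yours, simply writing down this formula and declaring that ``a simple induction argument'' yields the properties.
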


\begin{proof}
Set $J_{1} = [0,1]$. Assume now that for some $w\in\W\setminus \{\varepsilon\}$ we have defined a nondegenerate closed interval $J_w = [a,b]$. For each $i\in \{1,\dots,n_w\}$ define 
\[ J_{wi} = \left[a + (2i-1)(2n_w+1)^{-1}|J_w|, a + 2i(2n_w+1)^{-1}|J_w|\right].\] 
A simple induction argument shows now that the intervals $\{J_w : w \in \W\}$ satisfy (a)--(d).
\end{proof}

For a given $w \in \W\setminus \{\varepsilon\}$, the set $J_w \setminus \bigcup_{i = 1}^{n_w} J_{wi}$ is a union of $n_w + 1$ open intervals. Denote the closures of these intervals by 
\[ \{I_{w,i} : i \in \{0,\dots,n_w\}\},\] 
ordered so that the right endpoint of $I_{w,i}$ lies to the left of the left endpoint of $I_{w,i+1}$ for each $i \in \{1,\dots,n_w - 1\}$.

\begin{Lem}\label{rem:I}
The collection $\{I_{w,i} : w \in \W\setminus \{\varepsilon\}, i \in \{0,\dots,n_w\}\}$ satisfies the following properties.
\begin{enumerate}[(a)]
\item For each $w \in \W\setminus \{\varepsilon\}$ and $i \in \{0,\dots,n_w\}$, $I_{w,i} \subset J_w$ and $|I_{w,i}| = (2n_w + 1)^{-1}|J_w|$.
\item For each $w \in \W\setminus \{\varepsilon\}$ and $i \in \{0,\dots,n_w-1\}$, $\dist(I_{w,i},I_{w,i+1}) = (2n_w + 1)^{-1}|J_w|$ with the right endpoint of $I_{w,i}$ lying to the left of the left endpoint of $I_{w,i+1}$.
\item If $I_{w,i}$ is adjacent to $I_{u,j}$, then either $w=u^{\uparrow}$ or $u=w^{\uparrow}$. In either case,
\[ |I_{w,i}| \geq (2n_w+1)^{-1}|I_{u,j}|.\]
\end{enumerate}
\end{Lem}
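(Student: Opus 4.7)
The plan is to work directly from the explicit formulas in the proof of Lemma \ref{Lem:Decomposition}. If $J_w = [a,b]$, then the construction partitions $J_w$ into $2n_w+1$ equal subintervals of length $(2n_w+1)^{-1}|J_w|$, with the $J_{wi}$'s occupying the even-indexed slots (starting from slot $1$) and the gaps $I_{w,i}$ occupying the odd-indexed ones. More precisely, for $i\in\{0,\dots,n_w\}$,
\[ I_{w,i} = \bigl[a + 2i(2n_w+1)^{-1}|J_w|,\; a + (2i+1)(2n_w+1)^{-1}|J_w|\bigr]. \]
From this formula, properties (a) and (b) are immediate: each $I_{w,i}$ has length $(2n_w+1)^{-1}|J_w|$, is contained in $J_w$, and consecutive gaps $I_{w,i}$ and $I_{w,i+1}$ are separated by exactly the interval $J_{w(i+1)}$, which has length $(2n_w+1)^{-1}|J_w|$.

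For property (c), I first note that adjacency within a fixed word cannot occur, since the gap between $I_{w,i}$ and $I_{w,j}$ for $i\neq j$ always contains at least one full subinterval $J_{wk}$ of positive length. So suppose $I_{w,i}$ and $I_{u,j}$ are adjacent with $w\neq u$, sharing some point $p$. The endpoints of $I_{w,i}$ are either endpoints of $J_w$ (only when $i\in\{0,n_w\}$) or endpoints of some $J_{wk}$. In the first case, $p$ is an endpoint of $J_w = J_{w^{\uparrow}\cdot m}$, which coincides with an endpoint of the neighboring gap $I_{w^{\uparrow},m-1}$ or $I_{w^{\uparrow},m}$; by the explicit formula above applied to $J_{w^{\uparrow}}$, no $J$-subinterval of $J_{w^{\uparrow}}$ reaches its own endpoints, so no other $I$-interval at any deeper or shallower level can touch $p$. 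In the second case, $p$ is an endpoint of some child $J_{wk}$, which is simultaneously an endpoint of the extremal gap $I_{wk,0}$ or $I_{wk,n_{wk}}$ inside $J_{wk}$; again, the formula shows no $J$-subinterval of $J_{wk}$ touches that endpoint of $J_{wk}$, so no deeper $I$-interval can meet $p$. In all cases the only partner for $I_{w,i}$ at $p$ lies one level above or one level below, giving $w = u^{\uparrow}$ or $u = w^{\uparrow}$.

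Finally, the length comparison in (c) follows from $|I_{w,i}| = (2n_w+1)^{-1}|J_w|$ together with Lemma \ref{Lem:Decomposition}(b). If $u = w^{\uparrow}$ (so $w = uk$ for some $k$), then $|J_w| = (2n_u+1)^{-1}|J_u|$, whence
\[ |I_{w,i}| = (2n_w+1)^{-1}(2n_u+1)^{-1}|J_u| = (2n_w+1)^{-1}|I_{u,j}|. \]
If $w = u^{\uparrow}$, then $|J_u| = (2n_w+1)^{-1}|J_w|$ and hence $|I_{w,i}| = (2n_u+1)|I_{u,j}|\geq |I_{u,j}|\geq (2n_w+1)^{-1}|I_{u,j}|$. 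So in either case the bound in (c) holds. The main conceptual step is the case analysis for adjacency, which reduces to the observation that the sub-partition leaves a ``buffer'' gap at both ends of each $J_w$ and at both ends of each $J_{wk}$, preventing adjacency between $I$-intervals whose levels differ by more than one; the rest is bookkeeping with the explicit formula.
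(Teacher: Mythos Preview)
Your proof is correct and follows essentially the same approach as the paper: parts (a) and (b) are read off directly from the explicit partition of $J_w$ into $2n_w+1$ equal pieces, and part (c) is deduced from the nested structure of the intervals together with the length formula. The paper's argument for (c) is more compressed---it simply observes that if $|w|\le|u|$ then $I_{w,i}$ must be adjacent to $J_u$, forcing $w=u^{\uparrow}$, and then reads off $|I_{w,i}|=|J_u|\ge|I_{u,j}|$ (and symmetrically in the other case)---whereas you track explicitly which endpoint is shared and rule out non-consecutive levels via the buffer gaps; the content is the same. One small remark: in your first case the phrase ``no $J$-subinterval of $J_{w^{\uparrow}}$ reaches its own endpoints'' is a bit ambiguous (the relevant fact is that deeper $J$-intervals stay away from $p$, not from $\partial J_{w^{\uparrow}}$), but the intended reasoning is clear and correct.
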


\begin{proof}
The first two claims are immediate. For the third, assume first that $|w|\leq |u|$. By design, $w\neq u$. Therefore, $I_{w,i}$ is adjacent to $J_u$. But then, $w=u^{\uparrow}$. In this case,
\[ |I_{w,i}| = |J_u| \geq |I_{u,j}|. \]
Suppose now that $|w|\geq |u|$. Working as in the previous case, $u=w^{\uparrow}$ and
\[ |I_{w,i}| = (2n_w+1)^{-1}|J_w| = |I_{u,j}|. \qedhere\]
\end{proof}

For each $w \in \W\setminus \{\varepsilon\}$ and $i \in \{0,\dots,n_w\}$, denote the middle third of $I_{w,i}$ by $\hat I_{w,i}$ and set
\begin{equation}\label{eq:Lambda}
\Lambda_w = \frac{\diam{U_w}}{|I_{w,1}|} = (2n_w + 1)\frac{\diam{U_w}}{|J_w|}.
\end{equation}
Note that for all $i \in \{0,\dots,n_w\}$ we have $|I_{w,i}| = \Lambda_w^{-1}\diam U_w$ and $|\hat I_{w,i}| = (3\Lambda_w)^{-1}\diam U_w$.

\begin{Rem}
The limit set 
\begin{equation}\label{eq:J}
J_{\infty} = \bigcap_{n=1}^{\infty}\bigcup_{\substack{w\in\W \\ |w|=n}}J_w \subset J_1
\end{equation}
is compact and totally disconnected. Moreover, if $E$ is $c$-uniformly disconnected, then $J_{\infty}$ is $c'$-uniformly disconnected for some $c'(c)>1$ and the collection of intervals 
\[ \{I_{w,i} : w\in \W\setminus \{\varepsilon\}, \, i=0,\dots,n_w\}\] 
is a \emph{Whitney decomposition} of $J_1\setminus J_{\infty}$; see \cite[Theorem VI.1.1,Proposition VI.1.1]{Stein:1970}.
\end{Rem}

\section{Bi-Lipschitz arcs in \texorpdfstring{$X$}{X}}\label{Sec:Quasisymmetric2}

The crux of the proofs of Theorem \ref{thm:main1} and Theorem \ref{Thm:main2} is the the following proposition which is the main result of this section. 

\begin{Prop}\label{Prop:AllGen}
For each $w\in \W$ there exists a bi-Lipschitz embedding
\[ h_w : \bigcup_{i=1}^{n_w} \bigcup_{j=0}^{n_{wi}} \hat I_{wi,j} \to \bigcup_{i=1}^{n_w}\mathcal{N}_{wi} \]
with the following properties.
\begin{enumerate}
\item For each $w\in \W$ and each $i \in \{1,\dots,n_w\}$, 
\begin{enumerate}
\item the arc $h_w|\hat I_{wi,0}$ joins $X_{wi}$ to $X_{wi1}$ in $\mathcal{N}_{wi}$,
\item the arc $h_w|\hat I_{wi,j}$ joins $Y_{wij}$ to $X_{wi(j + 1)}$ in $\mathcal{N}_{wi}$ for $j \in \{1,\dots,n_{wi} - 1\}$,
\item the arc $h_w|\hat I_{wi,n_{wi}}$ joins $Y_{win_{wi}}$ to $Y_{wi}$ in $\mathcal{N}_{wi}$.
\end{enumerate}
\item For each $w\in\W$, $\dist(\Im(h_w),\bigcup_{u\in\W\setminus \{w\}}\Im(h_u)) > 0$.
\end{enumerate}

Moreover, if $E$ is $c$-uniformly disconnected, then, in addition to (1), there exists a constant $L(C_0,Q,C,c) \ge 1$ such that for each $w \in \W$, 
\begin{enumerate}
\item[(3)] $h_w$ is a $(L,\Lambda_w)$-quasisimilarity, and
\item[(4)] $\dist(\Im(h_w),\bigcup_{u\in\W\setminus \{w\}}\Im(h_u)) \geq L^{-1}\diam{U_w}$.
\end{enumerate} 
\end{Prop}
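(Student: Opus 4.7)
For each fixed $w \in \W$ and $i \in \{1,\dots,n_w\}$, I will build the $n_{wi}+1$ arcs $h_w|\hat I_{wi,j}$ inside $\mathcal{N}_{wi}$ in a two-step procedure: first produce a rough curve, then upgrade to a bi-Lipschitz arc via Lemma \ref{Lem:ApproximatingCurve}. For the rough step, $\hat U_{wi}$ is path connected by (D6), so applying Lemma \ref{Lem:UniformNeighborhood} with a sufficiently small $r$ yields a curve $\sigma_{wi,j} \subset \mathcal{N}_{wi}$ joining the appropriate docking centers---$x_{wi}$ to $x_{wi1}$ when $j = 0$, $y_{wij}$ to $x_{wi(j+1)}$ when $1 \le j \le n_{wi} - 1$, and $y_{win_{wi}}$ to $y_{wi}$ when $j = n_{wi}$---of length comparable to the distance between its endpoints. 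By \eqref{eq:distXwiYwj}--\eqref{eq:distXwiXwj} this distance is comparable to $\diam U_{wi}$, with uniform ratio in $w$ when $E$ is $c$-uniformly disconnected.

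I then process the triples $(w,i,j)$ in lexicographic order of increasing $|w|$. For each $\sigma_{wi,j}$, I apply Lemma \ref{Lem:ApproximatingCurve}, taking $p$ slightly below $Q-1$ (available through the Keith--Zhong self-improvement of the $(Q-1)$-Poincar\'e inequality), to obtain a bi-Lipschitz $(L,\diam\sigma_{wi,j})$-quasisimilarity $\gamma_{wi,j}$ that shares the endpoints of $\sigma_{wi,j}$, lies in a small neighborhood of $\sigma_{wi,j}$, and stays at positive distance from an avoidance set $Y_{wi,j}$ consisting of every previously constructed arc $\gamma_{w'i',j'}$ together with every closed docking ball $\ove{X_{\cdot}},\ove{Y_{\cdot}}$ except the two that already contain the endpoints of $\sigma_{wi,j}$. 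Each previous $\gamma_{w'i',j'}$ is a bi-Lipschitz image of an interval and hence $1$-HTB; the docking balls contribute finitely many sets per word; and by Lemma \ref{lem:distofhats} only the arcs indexed by the parent $w^\uparrow$ and by the children $wik$ can come within $\e_{wi}\diam U_{wi}$ of $\mathcal{N}_{wi}$, a bounded number when $E$ is uniformly disconnected. Thus $Y_{wi,j}$ is $(C_1,\alpha)$-HTB with $\alpha$ just above $1$ and, in the uniformly disconnected case, with $C_1$ uniform in $w$, so Lemma \ref{Lem:ApproximatingCurve} applies with constants independent of $w$.

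Define $h_w|\hat I_{wi,j}$ to be an affine reparameterization of $\gamma_{wi,j}$. Property (1) is immediate. For (2), Lemma \ref{lem:distofhats} handles the case where $w,u$ do not share a parent and are not in parent-child relation; when $u = w^\uparrow$ or $u = wi$, the strict separation is enforced by the avoidance set above and by ensuring that parent- and child-arcs do not share any docking-ball endpoint (for instance, terminate the $h_{w^\uparrow}$-arc at $x_w$ and start the $h_w$-arc at a distinct, slightly offset point of $X_w$). For the uniformly disconnected case, properties (3) and (4) follow from the uniformity of $\e_w,\rho_w,\lambda_w,n_w$, the uniform output of Lemma \ref{Lem:ApproximatingCurve}, and the relation $|\hat I_{wi,j}| = (3\Lambda_{wi})^{-1}\diam U_{wi} \asymp \Lambda_w^{-1}\diam U_w$ that comes from (D4') and $n_{wi} \le N$, which converts the per-piece bi-Lipschitz control into the claimed $(L,\Lambda_w)$-quasisimilarity and the quantitative gap $L^{-1}\diam U_w$.

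\textbf{Main obstacle.} The crux is keeping the HTB constants of $Y_{wi,j}$ uniform in $w$ so that Lemma \ref{Lem:ApproximatingCurve} returns bi-Lipschitz and separation constants that do not depend on the depth $|w|$. This uniformity rests on the bounded branching $n_w \le N$ (uniformly disconnected case) together with Lemma \ref{lem:distofhats}, which confines the previously built arcs that can approach $\mathcal{N}_{wi}$ to those of the direct parent $w^\uparrow$ and the immediate children $wik$. Without this double localization the HTB dimension of $Y_{wi,j}$ would grow with $|w|$, and the quasisymmetric control needed for Theorem \ref{thm:main1} would be lost.
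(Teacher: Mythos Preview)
Your plan handles properties (1) and (2) correctly, but there is a genuine gap in obtaining the uniform constants for (3) and (4) in the uniformly disconnected case.

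The problem is the recursion of bi-Lipschitz constants through the generations. In your increasing-$|w|$ ordering, the arcs of $h_w$ (living in $\mathcal{N}_{wi}$) must avoid the already-built arcs of $h_{w^{\uparrow}}$ (living in $\mathcal{N}_w$, which can meet $\mathcal{N}_{wi}$). You correctly note that Lemma~\ref{lem:distofhats} bounds the \emph{number} of previously built arcs that can approach $\mathcal{N}_{wi}$. But the HTB constant $C_1$ you feed into Lemma~\ref{Lem:ApproximatingCurve} also depends on the \emph{bi-Lipschitz constants} of those arcs, and for the parent's arcs this is the quantity $L_{|w|-1}$ produced at the previous depth. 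Since the output $L$ of Lemma~\ref{Lem:ApproximatingCurve} depends on $C_1$, you obtain a recursion $L_n = F(L_{n-1})$ of \emph{unbounded} depth $n = |w|$; nothing in your argument prevents this from diverging. Your ``Main obstacle'' paragraph names the right concern, but the proposed fix (bounded branching plus Lemma~\ref{lem:distofhats}) controls only the cardinality of the avoidance set, not this recursion in its quality. (A minor related point: in increasing-$|w|$ order the children $h_{wik}$ do not yet exist, so they cannot appear in $Y_{wi,j}$; your description of the avoidance set is slightly inconsistent here.)

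The paper avoids this accumulation by an even/odd parity trick. It first constructs $h_w$ for all $w$ of \emph{even} length; at this stage the avoidance set consists only of the previously built pieces \emph{within the same} $h_w$, a recursion of depth at most $N(N+1)$ and hence with constants depending only on \texttt{data}. Distinct even-length words are automatically separated: if $|w|$ and $|w'|$ are both even and $w\ne w'$, then for any $i,i'$ the words $wi$ and $w'i'$ differ, do not share a parent, and neither is the parent of the other (since their lengths differ by $0$ or at least $2$), so Lemma~\ref{lem:distofhats} applies. Only afterwards are the odd-length $h_w$ built, now also avoiding the adjacent even-generation images $h_{w^{\uparrow}}, h_{w1},\dots,h_{wn_w}$, whose HTB constants are already uniform. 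This two-layer scheme is what yields the depth-independent $L$ in (3) and the quantitative gap in (4); a single pass in increasing $|w|$ does not.
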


The construction of embeddings $\{h_w : w\in \W\}$ is done in two steps. In Section \ref{sec:even} we construct these embeddings for those words $w\in\W$ that have even number of letters, and in Section \ref{sec:odd} we construct them for those words $w\in\W$ that have odd number of letters; the extra complication is that curves defined in the second subsection must avoid those that have been defined in the first.
Recall that when we refer to $\texttt{data}$, we mean the constants $C_0$, $Q$, $C$, and $c$.

\subsection{Even generations}\label{sec:even}

Fix for the remainder of this subsection a word $w\in\W$ that has an even number of letters. The construction of the embedding $h_w$ is a double induction on the indices $i$ and $j$. For each $i \in \{1,\dots,n_w\}$, define
\[ \mathcal{I}_{wi} = \bigcup_{k=1}^i\bigcup_{j=0}^{n_{wk}}\hat I_{wk,j}.\]

\subsubsection{Base case} Here we construct a bi-Lipschitz embedding $h_{w,1}:\mathcal{I}_{w1} \to \mathcal{N}_{w1}$ in an inductive manner. For purposes of induction, set $Y_{w10} = X_{w1}$ and $X_{w1(n_{w1}+1)} = Y_{w1}$.

By Proposition \ref{Prop:DefiningSequence}(D6), there is a curve $\sigma_{w1,0}$ in $\hat U_{w1}$ joining $Y_{w10}$ (i.e., $X_{w1}$) to $X_{w11}$. Using Lemma \ref{Lem:ApproximatingCurve} with $Y=\emptyset$ and $\lambda = \e_{w1}/8$, we obtain a bi-Lipschitz embedding $\gamma_{w1,0}:\hat I_{w1,0} \to X$ joining $Y_{w10}$ to $X_{w11}$ with
\begin{equation}\label{eq:Nw1A}
\sup_{z \in \gamma_{w1,0}}\dist(z,\sigma_{w1,0}) < \tfrac{1}{2}\e_{w1}\diam U_{w1}. 
\end{equation}
If $E$ is $c$-uniformly disconnected, then $\lambda = 2^{-9}\xi^{-3}$, and Lemma \ref{Lem:ApproximatingCurve} implies that $\g_{w1,0}:\hat I_{w1,0} \to X$ is a $(L_{1,0}',\Lambda_w)$-quasisimilarity for some constant $L_{1,0}'(\texttt{data}) \geq 1$. The curve $\sigma_{w1,0}$ lies in $\hat U_{w1}$, so by \eqref{eq:Nw1A}, the curve $\gamma_{w1,0}$ lies in $\mathcal{N}_{w1}$. Set now $h_{w,1,0} = \g_{w1,0}$.

For the inductive step, suppose that for some $j_0 \in \{0,\dots,n_{w1} - 1\}$, we have defined a bi-Lipschitz embedding $h_{w,1,j_0} : \bigcup_{j=0}^{j_0}\hat I_{w1,j} \to \mathcal{N}_{w1}$  such that the arc $h_{w,1,j_0}|\hat I_{w1,j}$ joins $Y_{w1j}$ to $X_{w1(j + 1)}$ for $j \in \{0,\dots,j_0\}$. Assume, moreover, that if $E$ is $c$-uniformly disconnected, then $h_{w,1,j_0}$ is a $(L_{1,j_0}',\Lambda_{w})$-quasisimilarity for some $L_{1,j_0}' \ge 1$.

By Proposition \ref{Prop:DefiningSequence}(D6), there exists a curve $\sigma_{w1,j_0 + 1}:[0,1] \to \hat U_{w1}$ joining $Y_{w1(j_0 + 1)}$ to $X_{w1(j_0 + 2)}$. Since $h_{w,1,j_0}$ is bi-Lipschitz, we have that $h_{w,1,j_0}(\hat I_{w1,0} \cup \cdots \cup \hat I_{w1,j_0})$ is $1$-HTB. Using Lemma \ref{Lem:ApproximatingCurve} with $\lambda = \e_{w1}/8$ and $Y = h_{w,1,j_0}(\hat I_{w1,0} \cup \cdots \cup \hat I_{w1,j_0})$ we obtain a bi-Lipschitz embedding $\gamma_{w1,j_0+1}:\hat I_{w1,j_0+1} \to X$ joining $Y_{w1(j_0 + 1)}$ to $X_{w1(j_0 + 2)}$ disjoint from $Y$ such that
\begin{equation*}
\sup_{z \in \gamma_{w1,j_0+1}}\dist(z,\sigma_{w1,j_0+1}) < \tfrac12\e_{w1} \diam U_{w1}.
\end{equation*}
If $E$ is $c$-uniformly disconnected, then $\lambda=2^{-9}\xi^{-3}$ and $Y$ is $(C_{1,j_0},1)$-HTB for some constant $C_{1,j_0}(\texttt{data},L_{1,j_0}')\ge 1$. Hence, $\gamma_{w1,j_0+1}$ is an $(L_{1,j_0 + 1}',\Lambda_w)$-quasisimilarity for some constant $L_{1,j_0 + 1}'(\texttt{data},L_{1,j_0}') \ge 1$ and there exists $\beta_{1,j_0 + 1}(\texttt{data},L_{1,j_0}') \ge 1$ such that
\begin{align}
\inf_{z \in \gamma_{w1,j_0 + 1}} \dist(z,Y) & \ge 2^{-9}\beta_{1,j_0 + 1}\xi^{-3}\diam(\sigma_{w1,j_0 + 1}) \label{eq:beta} \\
& \ge 2^{-9}\beta_{1,j_0 + 1}\xi^{-3}\dist(Y_{w1(j_0 + 1)},X_{w1(j_0 + 2)}) \nonumber \\
& \ge 2^{-19}\beta_{1,j_0 + 1}\xi^{-6}\rho(\diam U_{w1(j_0 + 1)} + \diam U_{w1(j_0 + 2)}) \nonumber \\
& \ge 2^{-20}\beta_{1,j_0 + 1}\xi^{-7}\rho\diam U_{w1} \nonumber \\
& =: \eta_{1,j_0 + 1}\diam U_{w1} \nonumber
\end{align}
by \eqref{eq:distXwiYwj} and \eqref{eq:diamUwi} (recalling that $\delta = (4\xi)^{-1}$, $\eps_{w1} = (4\xi)^{-3}$, and that $\rho_w = \rho$ is independent of $w$ by Remark \ref{rem:rho}).
As in the case $j = 0$, we have that $\gamma_{w1,j_0 + 1}$ lies in $\mathcal{N}_{w1}$. The next lemma completes the inductive step.

\begin{Lem}\label{Lem:Union}
Let $h_{w,1,j_0+1} : \hat I_{w1,0} \cup \cdots \cup \hat I_{w1,j_0+1} \to \mathcal{N}_{w1}$ be defined by 
\[ h_{w,1,j_0+1}|(\hat I_{w1,0} \cup \cdots \cup \hat I_{w1,j_0}) = h_{w,1,j_0} \quad\text{and}\quad h_{w,1,j_0+1}|\hat I_{w1,j_0+1} = \gamma_{w1,j_0+1}.\] Then $h_{w,1,j_0+1}$ is bi-Lipschitz. Moreover, if $E$ is $c$-uniformly disconnected, then $h_{w,1,j_0+1}$ is an $(L_{1,j_0 + 1},\Lambda_w)$-quasisimilarity for some constant $L_{1,j_0 + 1}(\texttt{data},L_{1,j_0})\ge 1$.
\end{Lem}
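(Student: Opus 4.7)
The plan is to verify the statement in three ingredients: well-definedness and injectivity, the bi-Lipschitz estimates on each ``piece'' separately, and the bi-Lipschitz (respectively quasisimilarity) estimates across the two pieces. The first two follow almost immediately from how the map is built, and the main content is in the third.

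First, well-definedness and injectivity. The intervals $\hat I_{w1,0},\dots,\hat I_{w1,j_0+1}$ are pairwise disjoint (they are middle thirds of the $I_{w1,j}$'s), so the piecewise definition makes sense. The map $h_{w,1,j_0}$ is injective by the inductive hypothesis, and $\gamma_{w1,j_0+1}$ is injective since it is bi-Lipschitz by Lemma \ref{Lem:ApproximatingCurve}. Crucially, $\gamma_{w1,j_0+1}$ was constructed via Lemma \ref{Lem:ApproximatingCurve} with $Y = \Im(h_{w,1,j_0})$ as the set to avoid, so the images of the two pieces are disjoint; hence the concatenation is injective.

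Next, the metric estimates. If $t,t'$ both lie in the domain of $h_{w,1,j_0}$, the bound follows from the inductive hypothesis; if both lie in $\hat I_{w1,j_0+1}$, it follows from the $(L_{1,j_0+1}',\Lambda_w)$-quasisimilarity of $\gamma_{w1,j_0+1}$ (recalling that $\Lambda_w$ and $\diam\sigma_{w1,j_0+1}$ are comparable up to data when $E$ is uniformly disconnected, since $n_{w1}\leq N$ and $\diam U_w \asymp \diam U_{w1}$ by \eqref{eq:diamUwi}). So it remains to treat $t \in \hat I_{w1,j}$ with $j \leq j_0$ and $t' \in \hat I_{w1,j_0+1}$. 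For the upper bound, both images lie in $\mathcal{N}_{w1}$, which has diameter at most $(1+2\e_{w1})\diam U_{w1}$; meanwhile, by Lemma \ref{rem:I}, any two distinct $\hat I_{w1,k}$'s are separated by at least a definite fraction of $|I_{w1,1}| = \Lambda_{w1}^{-1}\diam U_{w1}$, so that
\[ \frac{d(h(t),h(t'))}{|t-t'|} \lesssim \Lambda_{w1} \asymp \Lambda_w. \]
For the lower bound, the separation estimate \eqref{eq:beta} gives $d(h(t),h(t')) \geq \eta_{1,j_0+1}\diam U_{w1}$, while $|t-t'| \leq |J_{w1}| = (2n_{w1}+1)\Lambda_{w1}^{-1}\diam U_{w1}$, hence
\[ \frac{d(h(t),h(t'))}{|t-t'|} \gtrsim \frac{\eta_{1,j_0+1}\Lambda_{w1}}{2n_{w1}+1}. \]

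In the purely totally disconnected case this already gives bi-Lipschitz control with some finite constant. In the $c$-uniformly disconnected case, all ingredients are quantitative: $n_{w1}\leq N(\texttt{data})$, $\Lambda_w \asymp \Lambda_{w1}$ up to data, and $\eta_{1,j_0+1}$ depends only on data and $L_{1,j_0}$. Combining with the inductive constant $L_{1,j_0}$, one extracts $L_{1,j_0+1}(\texttt{data},L_{1,j_0})\geq 1$ for which $h_{w,1,j_0+1}$ is an $(L_{1,j_0+1},\Lambda_w)$-quasisimilarity.

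The main obstacle is the cross-piece lower bound, which is where the careful construction of $\gamma_{w1,j_0+1}$ via Lemma \ref{Lem:ApproximatingCurve} pays off: the separation $\eta_{1,j_0+1}\diam U_{w1}$ between $\gamma_{w1,j_0+1}$ and $Y = \Im(h_{w,1,j_0})$ is precisely what prevents the concatenation from collapsing, and the fact that this separation is proportional to $\diam U_{w1}$ is exactly the scale needed to balance the geometric factor $\Lambda_{w1}^{-1}\diam U_{w1}$ coming from the interval lengths. The subtle quantitative issue is that the HTB constant of $Y$ depends on $L_{1,j_0}$, and hence so does $\beta_{1,j_0+1}$ and ultimately $L_{1,j_0+1}$; this is why the inductive constant is allowed to grow with $j_0$ but, because the induction runs for at most $n_{w1}+1\leq N+1$ steps, remains bounded purely in terms of data in the end.
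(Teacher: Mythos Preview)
Your proof is correct and follows essentially the same approach as the paper's: both reduce the cross-piece estimate to bounding $|s-t|$ and $d(h(s),h(t))$ above and below by fixed multiples of $\Lambda_w^{-1}\diam U_w$ and $\diam U_w$ respectively, using the separation estimate \eqref{eq:beta} for the lower image bound and containment in $\mathcal{N}_{w1}$ for the upper. The paper works directly with $\Lambda_w$ and $|J_w|$, while you route through $\Lambda_{w1}$ and then invoke $\Lambda_{w1}\asymp\Lambda_w$; this is harmless since, as you note, $n_w,n_{w1}\le N$ and $\diam U_{w1}\asymp\diam U_w$ by \eqref{eq:diamUwi}. One small wording slip: the parenthetical ``$\Lambda_w$ and $\diam\sigma_{w1,j_0+1}$ are comparable'' is dimensionally off---what you mean is that the quasisimilarity scale $\diam\sigma_{w1,j_0+1}/|\hat I_{w1,j_0+1}|$ coming out of Lemma~\ref{Lem:ApproximatingCurve} (after reparametrizing from $[0,1]$ to $\hat I_{w1,j_0+1}$) is comparable to $\Lambda_w$.
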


\begin{proof}
Since $h_{w,1,j_0}$ and $\gamma_{w1,j_0+1}$ are bi-Lipschitz and have disjoint images, it follows that $h_{w,1,j_0+1}$ is bi-Lipschitz. 

Assume now that $E$ is $c$-uniformly disconnected. By the inductive step and by design $h_{w,1,j_0}$ and $\gamma_{w1,j_0+1}$ are quasisimilarities. Let now $s \in \hat I_{w1,0} \cup \cdots \cup \hat I_{w1,j_0}$ and $t \in \hat I_{w1,j_0+1}$. On the one hand,
\[ (2N+1)^{-2} \leq (2N+1)^{-1}\frac{|J_{w1}|}{|J_w|} \leq \frac{\dist(I_{w1,j_0}, I_{w1,j_0+1})}{|J_w|} \leq \frac{|s-t|}{|J_w|} \leq \frac{|J_{w1}|}{|J_w|} \leq 1, \]
which implies that
\[ (2N+1)^{-2}\Lambda_w^{-1}\diam U_w \leq |s-t| \leq \Lambda_w^{-1}\diam U_w.\]
On the other hand, by \eqref{eq:beta} and \eqref{eq:diamUwi},
\begin{align*}
(4\xi)^{-1}\eta_{1,j_0 + 1}\diam U_w  \le \eta_{1,j_0 + 1}\diam U_{w1} \leq d(h_{w1,j_0+1}(s),h_{w1,j_0+1}(t)) & \leq \diam{\mathcal{N}_{w1}} \\
& \leq \diam{U_w}. \qedhere
\end{align*}
\end{proof}

Proceeding inductively, we obtain a bi-Lipschitz embedding $h_{w,1} := h_{w,1,n_{w1}}$ such that property (1) of Proposition \ref{Prop:AllGen} holds for $i=1$. Moreover, if $E$ is $c$-uniformly disconnected, then $h_{w,1}$ is a $(L_1,\Lambda_w)$-quasisimilarity for some constant $L_1(\texttt{data}) \geq 1$.

\subsubsection{Inductive step}

Suppose that for some $i_0 \in \{1,\dots,n_w-1\}$, we have constructed a bi-Lipschitz embedding $h_{w,i_0}:\mathcal{I}_{wi_0}  \to \bigcup_{i=1}^{i_0}\mathcal{N}_{wi}$ such that for each $i\in \{1,\dots,i_0\}$, the arc $h_{w,i_0}|\hat I_{wi,0}$ satisfies property (1) of Proposition \ref{Prop:AllGen}. Moreover, suppose that if $E$ is $c$-uniformly disconnected, then $h_{w,i_0}$ is an $(L_{i_0},\Lambda_w)$-quasisimilarity for some constant $L_{i_0} \geq 1$.

As with $h_{w,1}$, the construction of $h_{w,i_0+1}: \mathcal{I}_{w(i_0+1)} \to \bigcup_{i=1}^{i_0+1}\mathcal{N}_{wi}$ is in an inductive fashion and we only outline the steps. Set $Y_{w(i_0+1)0} = X_{w(i_0+1)}$ and $X_{w(i_0+1)(n_{w(i_0+1)}+1)} = Y_{w(i_0+1)}$. By Proposition \ref{Prop:DefiningSequence}(D6), we find a curve $\sigma_{w(i_0 + 1),0}$ joining $Y_{w(i_0+1)0}$ to $X_{w(i_0 + 1)1}$ in $\hat{U}_{w(i_0 + 1)}$. Since $h_{w,i_0}$ is bi-Lipschitz, its image $Y$ is 1-HTB and applying Lemma \ref{Lem:ApproximatingCurve} with $\lambda = \e_{w(i_0+1)}/8$, we obtain a bi-Lipschitz curve $\gamma_{w(i_0+1),0} : \hat I_{w(i_0+1),0} \to \mathcal{N}_{w(i_0+1)}$ joining $Y_{w(i_0+1)0}$ to $X_{w(i_0+1)1}$ whose image is disjoint from $Y$. If $E$ is $c$-uniformly disconnected, then the image of $h_{w,i_0}$ is $(C_{i_0},1)$-HTB, which implies that $\gamma_{w(i_0+1),0}$ is an $(L_{i_0+1,0}',\Lambda_w)$-quasisimilarity. Now define $h_{w,i_0+1,0} : \mathcal{I}_{wi_0} \cup \hat I_{w(i_0+1),0} \to \bigcup_{i=1}^{i_0+1}\mathcal{N}_{wi}$ by
\[ h_{w,i_0+1,0} | \mathcal{I}_{wi_0} = h_{w,i_0}\quad \text{and}\quad h_{w,i_0+1,0} | \hat I_{w(i_0+1),0} = \gamma_{w(i_0+1),0}.\] 
Working as in Lemma \ref{Lem:Union}, we can show that $h_{w,i_0+1,0}$ is bi-Lipschitz, and if $E$ is $c$-uniformly disconnected, then $h_{w,i_0+1,0}$ is an $(L_{i_0+1,0},\Lambda_w)$-quasisimilarity.

Assuming we have defined $h_{w,i_0+1,j_0}$ for some $j_0 \in \{0,\dots,n_{w(i_0+1)}-1\}$, we use Proposition \ref{Prop:DefiningSequence}(D6) to find a curve $\sigma_{w(i_0 + 1),j_0+1}$ joining $Y_{w(i_0 + 1)(j_0+1)}$ to $Y_{w(i_0 + 1)(j_0+2)}$ in $\hat{U}_{w(i_0+1)}$. The image $Y$ of $h_{w,i_0 + 1,j_0}$ is 1-HTB since $h_{w,i_0 + 1,j_0}$ is bi-Lipschitz, so by Lemma \ref{Lem:ApproximatingCurve} (with $\lambda = \e_{w(i_0+1)}/8$), we obtain a bi-Lipschitz embedding $\gamma_{w(i_0+1),j_0+1} : \hat I_{w(i_0+1),j_0+1} \to \mathcal{N}_{w(i_0+1)}$ which is disjoint from $Y$. Moreover, if $E$ is $c$-uniformly disconnected, then $\gamma_{w(i_0+1),j_0+1}$ is an $(L_{i_0+1,j_0+1}',\Lambda_w)$-quasisimilarity. Working as in Lemma \ref{Lem:Union}, we get that the map
\[ h_{w,i_0+1,j_0+1} : \mathcal{I}_{wi_0} \cup \bigcup_{j=0}^{j_0+1}\hat I_{w(i_0+1),j} \to  \bigcup_{i=1}^{i_0+1}\mathcal{N}_{wi}\]
defined by
\[ h_{w,i_0+1,j_0+1}|\mathcal{I}_{wi_0} \cup \bigcup_{j=0}^{j_0}\hat I_{w(i_0+1),j} = h_{w,i_0+1,j_0} \quad \text{and}\quad h_{w,i_0+1,j_0+1}|\hat I_{w(i_0+1),j_0+1} = \gamma_{w(i_0+1),j_0+1}\]
is bi-Lipschitz and, if $E$ is $c$-uniformly disconnected, it is an $(L_{i_0+1,j_0+1},\Lambda_w)$-quasisimilarity.

Proceeding inductively, we obtain a bi-Lipschitz embedding 
\[ h_{w,i_0+1} = h_{w,i_0+1,n_{w(i_0+1)}} : \mathcal{I}_{w(i_0+1)} \to \bigcup_{i=1}^{i_0+1} \mathcal{N}_{wi}\] 
such that property (1) of Proposition \ref{Prop:AllGen} holds for $i\in\{1,\dots,i_0+1\}$.

\subsubsection{End of induction}
After $n_w$ many steps we obtain a bi-Lipschitz embedding 
\[ h_w := h_{w,n_w} : \mathcal{I}_{wn_w} \to \bigcup_{i=1}^{n_w}\mathcal{N}_{wi} \]
that satisfies property (1) of Proposition \ref{Prop:AllGen}. Moreover, if $E$ is $c$-uniformly disconnected, then $n_w \leq N(\texttt{data})$ and $h_w$ is a $(L',\Lambda_w)$-quasisimilarity for some $L'(\texttt{data})\geq 1$.

\begin{figure}[ht]
\centering
\includegraphics[width=\textwidth]{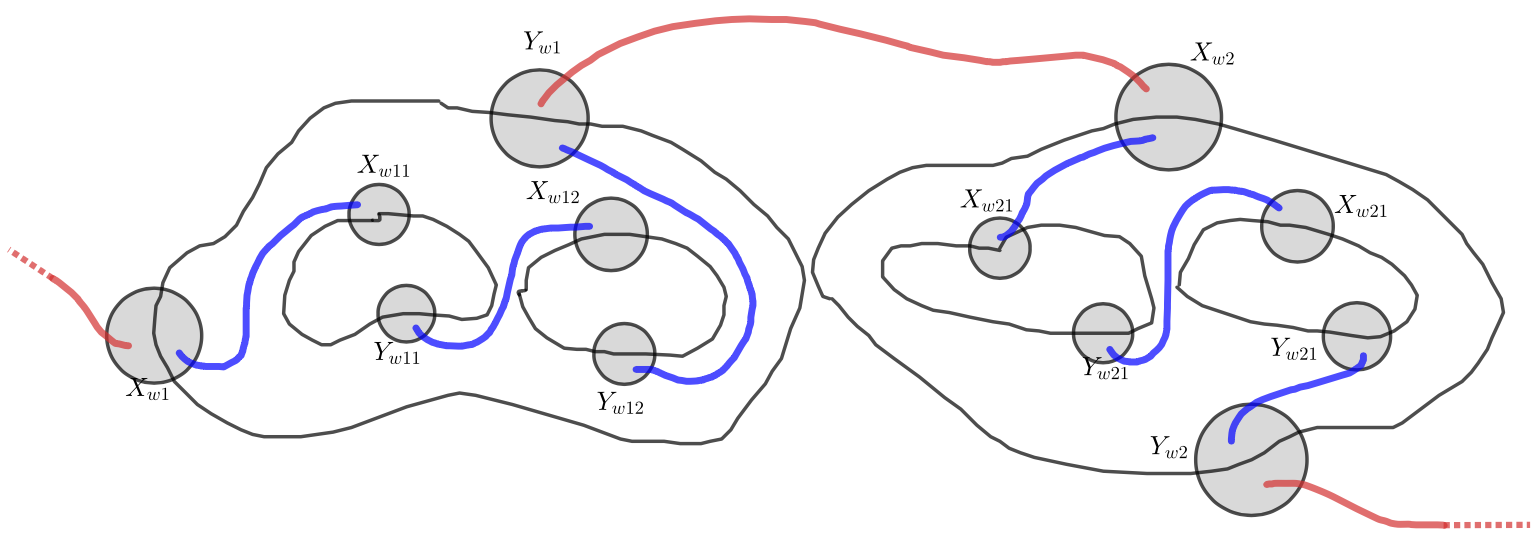}
\caption{In this example, $w$ has an even number of letters, $n_w = 2$ and $n_{w1}=n_{w2}=2$. The blue curves are the image of $h_{w}$ defined in \textsection\ref{sec:even} while the red curves are part of the image of $h_{w^{\uparrow}}$ defined in \textsection\ref{sec:odd}.}
\label{fig:curves}
\end{figure}

\subsection{Odd generations}\label{sec:odd}

Fix for this subsection a word $w \in \W$ that has an odd number of letters.
By \textsection\ref{sec:even}, we have constructed bi-Lipschitz embeddings $h_{w^{\uparrow}},h_{w1},\dots,h_{wn_w}$ such that, if $E$ is $c$-uniformly disconnected, then $h_w$ is an $(L',\Lambda_{w^{\uparrow}})$-quasisimilarity and each $h_{wi}$ is an $(L',\Lambda_{wi})$-quasisimilarity. The construction of the embedding $h_w$ mirrors the previous subsection, but here we also avoid the images of $h_{w^{\uparrow}},h_{w1},\dots,h_{wn_w}$.

Denote by $H$ the union of the images of $h_{w^{\uparrow}},h_{w1},\dots,h_{wn_w}$,. 
Since these maps are bi-Lipschitz, $H$ is 1-HTB and, by Lemma \ref{lem:BHR}, $H$ is porous. Further, if $E$ is $c$-uniformly disconnected, then $H$ is $(C',1)$-HTB for some $C'(\texttt{data}) \ge 1$ and $H$ is $p_0$-porous for some $p_0(\texttt{data}) \ge 1$.
By porosity, there exist balls $X_{wi}' \subset X_{wi}$ and $Y_{wi}' \subset Y_{wi}$ which are disjoint from $H$; if $E$ is $c$-uniformly disconnected, these balls can be taken to have diameters $(2p_0)^{-1}\diam X_{wi}$ and $(2p_0)^{-1}\diam Y_{wi}$, respectively and have distance from $H$ at least $(2p_0)^{-1}\diam X_{wi}$ and $(2p_0)^{-1}\diam Y_{wi}$, respectively.
Similarly, for each $i \in \{1,\ldots,n_w\}$ and $j \in \{1,\ldots,n_{wi}\}$, there exist balls $X_{wij}' \subset X_{wij}$ and $Y_{wij}' \subset Y_{wij}$ which are disjoint from $H$ and if $E$ is $c$-uniformly disconnected, the diameters of these balls are $(2p_0)^{-1}\diam X_{wij}$ and $(2p_0)^{-1}\diam Y_{wij}$, respectively, and they have distance from $H$ at least $(2p_0)^{-1}\diam X_{wij}$ and $(2p_0)^{-1}\diam Y_{wij}$, respectively.

We give a sketch of how to define $h_w$.
We apply the same methods as in \textsection\ref{sec:even}, but now also avoid $H$.
Specifically, the only difference is when applying Lemma \ref{Lem:ApproximatingCurve}; instead of taking $Y$ to be only the union $H'$ of the images of the bi-Lipschitz embedding constructed prior to the inductive step, we take $Y = H \cup H'$, which allows the new bi-Lipschitz curve $\gamma$ to be disjoint from $H$ as well.
Indeed, $H \cup H'$ is 1-HTB, as the union of two 1-HTB sets.

If $E$ is $c$-uniformly disconnected, then $H \cup H'$ is $(C'',1)$-HTB for some $C''(\texttt{data}) \ge 1$ and the curve $\gamma$ is an $(L'',\Lambda_w)$-quasisimilarity for some $L''(\texttt{data}) \ge 1$.
We proceed by extending the definition of $h_w$ using the curve $\gamma$, as before.

In the end, after finitely many steps, we obtain a bi-Lipschitz embedding 
$h_w$ and, provided $E$ is $c$-uniformly disconnected, $h_w$ is an $(L,\Lambda_w)$-quasisimilarity for some $L(\texttt{data}) \ge 1$ and there exists a constant $\eta(\texttt{data}) \in (0,1)$ such that
\begin{equation}\label{eq:betaIneq}
\dist\left(\Im(h_w),\Im(h_{w^{\uparrow}}) \cup \bigcup_{i = 1}^{n_w} \Im(h_{wi})\right) \ge \eta\diam U_w.
\end{equation}

\subsection{Proof of Proposition \ref{Prop:AllGen}}

It is immediate from design that embeddings $\{h_w : w\in \W\}$ satisfy properties (1) and (3) of Proposition \ref{Prop:AllGen}.

Fix now distinct $w,u \in \W$. Suppose first that one is not the parent of the other. Fix also a point $p \in \Im(h_w)$ and a point $q \in \Im(h_u)$. Then $p \in \mathcal{N}_{wi}$ and $q \in \mathcal{N}_{uj}$ for some $i \in \{1,\ldots,n_w\}$ and $j \in \{1,\ldots,n_u\}$. By Lemma \ref{lem:distofhats},
\[ d(p,q) \ge \dist(\mathcal{N}_{wi},\mathcal{N}_{uj}) \ge \epsilon_{wi}\diam U_{wi} + \epsilon_{uj}\diam U_{uj}, \]
which gives properties (2) and (4).

Suppose now that $u=wi$ for some $i\in\{1,\dots,n_w\}$. By design (see \textsection\ref{sec:odd}), the images of $h_w$ and $h_{wi}$ are disjoint, which gives property (2). For property (4), suppose $E$ is $c$-uniformly disconnected.
If $u$ has an odd number of letters, then $w$ has an even number of letters, while if $u$ has an even number of letters, then $w$ has an odd number of letters. In either case, by \eqref{eq:betaIneq},
\[ d(p,q) \ge \dist(\Im(h_w),\Im(h_u)) \ge \eta\min\{\diam U_w,\diam U_u\}. \]

\section{Local modifications}\label{sec:modif}

In this section, within each ball $X_w$ and $Y_w$, with $|w|\geq 2$, we patch curves defined in Proposition \ref{Prop:AllGen} together with a bi-Lipschitz curve using Lemma \ref{Lem:ApproximatingCurve}, then delete overlapping portions and join with geodesics. These local modifications are identical to those in \cite[Section 6.1]{HVZ24}, aside from notational changes. 

Here and for the rest of this section, given a non-degenerate interval $[a,b]$ and two points $p,q\in X$, we say that a curve $g: [a,b] \to X$ is \emph{a geodesic joining $p$ to $q$} if $g(a) = p$, $g(b)=q$ and for all $t,s \in [a,b]$ with $t < s$ we have that
\[ \frac{d(g(s),g(t))}{s - t} = \frac{d(g(a),g(b))}{b-a}.\]
The constant on the right-hand side is called the \emph{scaling factor} of the geodesic $g$.

Recall the definition of the totally disconnected set $J_{\infty} \subset [0,1]$ from \eqref{eq:J}. Given our assumption that the only word with length one is 1, we have that $J_{\infty} \subset J_1$. It is clear from Lemma \ref{Lem:Decomposition} and Lemma \ref{rem:I} that the collection $\{I_{w,i} : w \in \W\setminus \{\varepsilon\}, i \in \{0,\ldots,n_w\}\}$ is a collection of closed subintervals of $J_1\setminus J_{\infty}$ with disjoint interiors such that
\[ J_1 = J_{\infty} \cup \bigcap_{w\in \W\setminus \{\varepsilon\}} \bigcup_{i=0}^{n_w} I_{w,i}. \]
Write $I_{w,i} = [a_{w,i},b_{w,i}]$ and $\hat{I}_{w,i} = [\hat{a}_{w,i},\hat{b}_{w,i}]$.
Set
\begin{equation}\label{eq:kappa}
\kappa_w = (\diam U_w)^{-1}\min\Bigg\{\dist\left(\Im(h_w),\bigcup_{v \in \W \setminus \{w\}} \Im(h_v)\right), \min_{\substack{i \in \{1,\ldots,n_w\} \\ j \in \{0,\ldots,n_{wi}\}}} d(h_w(\hat a_{wi,j}),h_w(\hat b_{wi,j}))\Bigg\},
\end{equation}
which is positive by Proposition \ref{Prop:AllGen}.
If $E$ is $c$-uniformly disconnected, then the right hand side of \eqref{eq:kappa} is bounded from below by $(3L(2N + 1))^{-1}$ (where $N$ is the constant of Proposition \ref{Prop:DefiningSequence} and $L$ is the constant of Proposition \ref{Prop:AllGen}) and we can set 
\[ \kappa_w = (3L(2N + 1))^{-1}.\]

With the exception of $I_{1,0}$ and $I_{1,n_1}$, each interval $I_{w,i}$ has an adjacent interval to its left and an adjacent interval to its right. In particular, given $w\in \W$, $i\in \{1,\dots,n_w\}$, and $j\in \{0,\dots,n_w\}$,
\begin{itemize}
\item if $j\neq n_{w}$, then the interval $I_{wi1,0}$ is to the right of $I_{wi,j}$, while if $j = n_w$ and $wi \neq 1$, then the interval $I_{w,i + 1}$ is to the right of $I_{wi,j}$;
\item if $j \neq 0$, then the interval $I_{wij,n_{wij}}$ is to the left of $I_{wi,j}$, while if $j = 0$ and $wi \neq 1$, then the interval $I_{w,i - 1}$ is to the left of $I_{wi,j}$;
\item $I_{1,0}$ has no interval to its left and $I_{1,n_1}$ has no interval to its right.
\end{itemize}

Let $\mathscr{I} = \{I_{w,i} : w \in \W, i \in \{0,\ldots,n_w\}\}$, and let $\mathscr{I} = \mathscr{I}_1 \cup \mathscr{I}_2$ be a partition of $\mathscr{I}$ into two disjoint subsets such that no two intervals in $\mathscr{I}_1$ are adjacent and no two intervals of $\mathscr{I}_2$ are adjacent.
We may assume that $I_{1,0}$ and $I_{1,n_1}$ are intervals of $\mathscr{I}_1$.
Note that each interval of $\mathscr{I}_2$ lies between two intervals of $\mathscr{I}_1$, and each interval of $\mathscr{I}_1 \setminus \{I_{1,0},I_{1,n_1}\}$ lies between two intervals of $\mathscr{I}_2$.
In the next subsection, we perform local modifications on intervals of $\mathscr{I}_1$, and then on intervals of $\mathscr{I}_2$ in the section following that.
The only difference is that we take care to consider the local modifications done on intervals of $\mathscr{I}_1$ when performing them on intervals of $\mathscr{I}_2$.
Recall that $\texttt{data}$ refers to the constants $C_0$, $Q$, $C$, and $c$, and recall the constant $L$ from Proposition \ref{Prop:AllGen}.

\subsection{Local modifications on intervals of \texorpdfstring{$\mathscr{I}_1$}{I1}}\label{sec:modif1}

Let $I_{wi,j} \in \mathscr{I}_1 \setminus \{I_{1,0}\}$.
We assume that $I_{ui',j'}$ is the interval to the left of $I_{wi,j}$; see Figure \ref{fig:ab}.
The points $h_u(\hat b_{ui',j'})$ and $h_w(\hat a_{wi,j})$ lie in a ball $B_{wij} \in \{X_{wij},Y_{wij}\}$ by Proposition \ref{Prop:AllGen}(1).
By Lemma \ref{rem:I}, the embedding $h_u$, in addition to being an $(L,\Lambda_u)$-quasisimilarity, is a $(\bar L,\Lambda_w)$-quasisimilarity for some $\bar L(L,\texttt{data}) \ge 1$ since one of $u$ or $w$ is the parent of the other.

\begin{figure}[ht]
\centering
\includegraphics[width=0.6\textwidth]{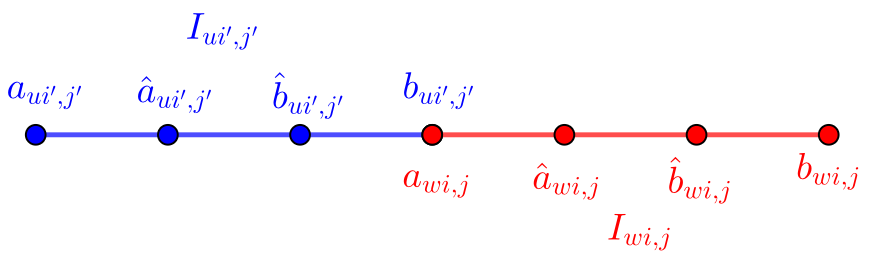}
\caption{The positions of $\hat a_{ui',j'}$, $\hat b_{ui',j'}$, $\hat a_{wi,j}$, and $\hat b_{wi,j}$. Note that $b_{ui',j'} = a_{wi,j}$.}
\label{fig:ab}
\end{figure}

Set
\[ H_{(ui',j'),(wi,j)} = \left(\Im(h_{w^{\uparrow}}) \cup \Im(h_w) \cup \bigcup_{i = 1}^{n_w} \Im(h_{wi})\right) \setminus (h_u(\hat I_{ui',j'}) \cup h_w(\hat I_{wi,j})). \]
The ball $B_{wij}$ is path connected, so there is a curve $\sigma_{(ui',j'),(wi,j)}:[0,1] \to B_{wij}$ joining $h_u(\hat b_{ui',j'})$ to $h_w(\hat a_{wi,j})$.
Using Lemma \ref{Lem:ApproximatingCurve} with $\lambda = \frac{1}{8}$ and $Y = H_{(ui',j'),(wi,j)}$, we find a bi-Lipschitz arc
\[ \Gamma_{(ui',j'),(wi,j)}:[\hat b_{ui',j'},\hat a_{wi,j}] \to X \]
such that $\Gamma_{(ui',j'),(wi,j)}(\hat b_{ui',j'}) = h_u(\hat b_{ui',j'})$, $\Gamma_{(ui',j'),(wi,j)}(\hat a_{wi,j}) = h_w(\hat a_{wi,j})$,
\begin{equation}\label{eq:close2}
\dist(\Gamma_{(ui',j'),(wi,j)},H_{(ui',j'),(wi,j)}) > 0
\end{equation}
and, for all $z \in \Gamma_{(ui',j'),(wi,j)}$,
\begin{equation}\label{eq:close1}
\dist(z,\sigma_{(ui',j'),(wi,j)}) \le \tfrac{1}{2}\diam(\sigma_{(ui',j'),(wi,j)}) \le \tfrac{1}{2}\diam B_{wij} \le \frac{1}{24}\epsilon_{wij}\rho_{wi}\diam U_{wij}.
\end{equation}
Furthermore, if $E$ is $c$-uniformly disconnected, then there exist $L^*(\texttt{data}) \ge 1$ and $\beta^*(\texttt{data}) \in (0,1)$ such that $\Gamma_{(ui',j'),(wi,j)}$ is an $(L^*,\Lambda_w)$-quasisimilarity and, by Proposition \ref{Prop:AllGen}(4), for each $z \in \Gamma_{(ui',j'),(wi,j)}$,
\begin{align}\label{eq:beta*}
\dist(z,H_{(ui',j'),(wi,j)}) & \ge \beta^*\diam(\sigma_{(ui',j'),(wi,j)}) \ge \beta^*d(h_u(\hat b_{ui',j'}),h_w(\hat a_{wi,j})) \geq \frac{\beta^*}{L}\diam U_w.
\end{align}

Inequality \eqref{eq:close1} ensures that the arc $\Gamma_{(ui',j'),(wi,j)}$ does not exit the ball $\tfrac32B_{wij}$, hence it does not leave $\cal N_{wi}$ and therefore cannot intersect with any arc $\Im(h_v)$ for any $v \in \W$ with $||v| - |w|| > 1$.
Inequality \eqref{eq:close2} ensures that $\Gamma_{(ui',j'),(wi,j)}$ does not intersect with any arc $\Im(h_v)$ such that $v \in \W$ with $||v| - |w|| \le 1$, aside from possibly in the arcs $h_u(\hat I_{ui',j'})$ and $h_w(\hat I_{wi,j})$. In the case where $E$ is $c$-uniformly disconnected, by \eqref{eq:beta*}, $\Gamma_{(ui',j'),(wi,j)}$ has quantitative distance from any arc $\Im(h_v)$ such that $v \in \W$ with $||v| - |w|| \le 1$, aside from the arcs $h_u(\hat I_{ui',j'})$ and $h_w(\hat I_{wi,j})$.

We now define a few points in $[\hat a_{ui',j'},\hat a_{wi,j}]$.
Set 
\[ \theta_{wi,j} = (2L)^{-1}\beta^*\epsilon_w\epsilon_{wi}\epsilon_{wij}\rho_{wi}. \]

\begin{Rem}
If $E$ is uniformly disconnected, since the quantities $\epsilon_w, \epsilon_{wi}, \epsilon_{wij}$ and $\rho_{wi}$ are chosen independent of $(wi,j)$, so is $\theta_{wi,j}$ chosen independent of $(wi,j)$ and we simply denote it by $\theta$.  
\end{Rem}

Now define
\begin{align*}
\til b_{ui',j'} &= \min\{t \in [\hat a_{ui',j'},\hat b_{ui',j'}] : \dist(h_u(t),\Gamma_{(ui',j'),(wi,j)}([\hat b_{ui',j'},\hat a_{wi,j}])) = \theta_{wi,j}\diam U_w\},\\
t_{ui',j'}^R &= \max\{t \in [\hat b_{ui',j'},\hat a_{wi,j}] : \dist(h_u(\til b_{ui',j'}),\Gamma_{(ui',j'),(wi,j)}(t)) = \theta_{wi,j}\diam U_w\}.
\end{align*}
Note that
\begin{equation*}
d(h_u(\til b_{ui',j'}),h_u(\hat b_{ui',j'})) \ge \dist(h_u(\til b_{ui',j'}),\Gamma_{(ui',j'),(wi,j)}([\hat b_{ui',j'},\hat a_{wi,j}])) \ge \theta_{wi,j}\diam U_w,
\end{equation*}
so $\hat b_{ui',j'} - \til b_{ui',j'} >0$ and, if $E$ is $c$-uniformly disconnected, then
\begin{equation}\label{eq:bt}
\hat b_{ui',j'} - \til b_{ui',j'} \ge (L\Lambda_w)^{-1}\theta\diam U_w.
\end{equation}
Moreover, by choice of $\theta_{wi,j}$,
\begin{align*}
d(&\Gamma_{(ui',j'),(wi,j)}(t_{ui',j'}^R),\Gamma_{(ui',j'),(wi,j)}(\hat a_{wi,j})) \\
& \ge \dist(h_w(\hat a_{wi,j}),h_u([\hat a_{ui',j'},\hat b_{ui',j'}])) - \dist(\Gamma_{(ui',j'),(wi,j)}(t_{ui',j'}^R),h_u([\hat a_{ui',j'},\hat b_{ui',j'}])) \\
& \ge \kappa_w\diam U_w - \theta_{wi,j}\diam U_w \\
& \ge \theta_{wi,j}\diam U_w,
\end{align*}
so $\hat a_{wi,j} - t_{ui',j'}^R >0$. If $E$ is $c$-uniformly disconnected, then
\begin{equation}\label{eq:at}
\hat a_{wi,j} - t_{ui',j'}^R \ge (L^*\Lambda_w)^{-1}\theta\diam U_w.
\end{equation}
Further, \eqref{eq:bt} and Lemma \ref{rem:I} imply that
\begin{align} \label{eq:t3t4}
\frac{\theta\diam U_w}{L\Lambda_w} & \le \hat b_{ui',j'} - \til b_{ui',j'} \le t_{ui',j'}^R - \til b_{ui',j'} \le 2(|\hat I_{ui',j'}| + |\hat I_{wi,j}|) 
\le \frac{(4N+2)\diam U_w}{\Lambda_w}. 
\end{align}

Fix a geodesic $g_{ui',j'}^R:[\til b_{ui',j'},t_{ui',j'}^R] \to X$ joining $h_u(\til b_{ui',j'})$ to $\Gamma_{(ui',j'),(wi,j)}(t_{ui',j'}^R)$ and note that by choice of $\theta_{wi,j}$, $g_{ui',j'}^R$ does not leave the ball $2B_{wij}$, and by extension, does not exit $\cal N_{wi}$.
The superscript ``$R$'' here is to denote that this geodesic and the point $t^R_{ui',j'}$ are defined on the right side of the interval $I_{ui',j'}$.
By \eqref{eq:t3t4}, the scaling factor of $g_{ui',j'}^R$ satisfies
\begin{equation}\label{eq:similarity}
\frac{\theta_{wi,j}\Lambda_w}{4N + 2} \le \frac{d(h_u(\til b_{ui',j'}),\Gamma_{(ui',j'),(wi,j)}(t_{ui',j'}^R))}{t_{ui',j'}^R - \til b_{ui',j'}} \le L\Lambda_w.
\end{equation}
Now set
\begin{align*}
\til a_{wi,j} &= \max\{t \in [\hat a_{wi,j},\hat b_{wi,j}] : \dist(h_w(t),\Gamma_{(ui',j'),(wi,j)}([\hat b_{ui',j'},\hat a_{wi,j}])) = \theta_{wi,j}\diam U_w\},\\
t_{wi,j}^L &= \min\{t \in [t_{ui',j'}^R,\hat a_{wi,j}] : d(\Gamma_{(ui',j'),(wi,j)}(t),h_w(t_{wi,j}^R)) = \theta_{wi,j}\diam U_w\}.
\end{align*}

\begin{figure}[ht]
\centering
\includegraphics[width=0.65\textwidth]{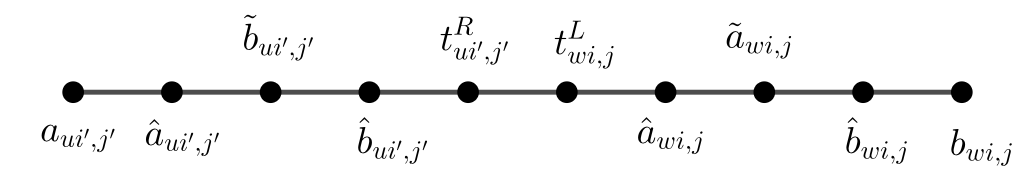}
\caption{The positions of $\til b_{ui',j'}$, $t_{ui',j'}^R$, $t_{wi,j}^L$, and $\til a_{wi,j}$.}
\label{fig:ts}
\end{figure}

As with \eqref{eq:at}, $\hat a_{wi,j} - t_{wi,j}^L > 0$, and, if $E$ is $c$-uniformly disconnected,
\begin{equation}\label{eq:at2}
\hat a_{wi,j} - t_{wi,j}^L \ge (L^*\Lambda_w)^{-1}\theta\diam U_w.
\end{equation}
Moreover, for any $t \in [\hat b_{ui',j'},\hat a_{wi,j}]$ satisfying $d(\Gamma_{(ui',j'),(wi,j)}(t),h_w(t_{wi,j}^R)) = \theta_{wi,j}\diam U_w$,
\begin{align*}
d(& \Gamma_{(ui',j'),(wi,j)}(t_{ui',j'}^R),\Gamma_{(ui',j'),(wi,j)}(t)) \\
& \ge \dist(h_u([\hat a_{ui',j'},\hat b_{ui',j'}]),h_w([\hat a_{wi,j},\hat b_{wi,j}])) - 2\theta_{wi,j}\diam U_w \\
& \ge \kappa_w\diam U_w - 2\theta_{wi,j}\diam U_w \\
& \ge \theta_{wi,j}\diam U_w
\end{align*}
by choice of $\theta_{wi,j}$.
Thus $t_{wi,j}^L$ is well-defined, $t_{wi,j}^L - t_{ui',j'}^R >0$ and, if $E$ is $c$-uniformly disconnected,
\begin{equation}\label{eq:tt}
t_{wi,j}^L - t_{ui',j'}^R \ge (L^*\Lambda_w)^{-1}\theta\diam U_w.
\end{equation}

Fix a geodesic $g_{wi,j}^L:[t_{wi,j}^L,\til a_{wi,j}] \to X$ joining $\Gamma_{(ui',j'),(wi,j)}(t_{wi,j}^L)$ to $h_w(\til a_{wi,j})$.
By choice of $\theta_{wi,j}$, $g_{wi,j}^L$ does not leave the ball $2B_{wij}$, hence does not exit $\cal N_{wi}$.
The superscript ``$L$'' here is to denote that this geodesic and $t^L_{wi,j}$ are defined on the left side of the interval $I_{wi,j}$.
A computation similar to \eqref{eq:similarity} gives the same bounds for the scaling factor of $g_{wi,j}^L$.

\begin{figure}[ht]
\centering
\includegraphics[width=0.8\textwidth]{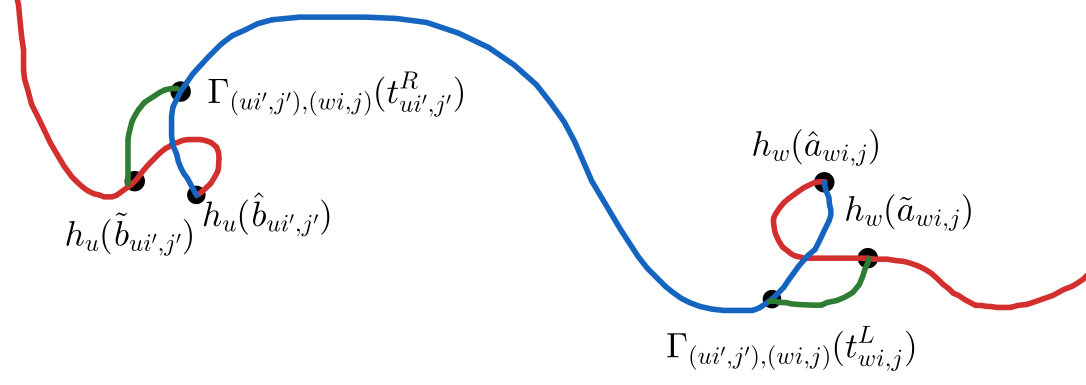}
\caption{In this figure, the red curves are the images of $h_u|\hat{I}_{ui',j'}$ (left) and $h_w|\hat{I}_{wi,j}$ (right), the blue curve is the curve $\Gamma_{(ui',j'),(wi,j)}$ and the green curves are the geodesics $g^R_{ui',j'}$ (left) and $g^L_{wi,j}$ (right).}
\label{fig:modif}
\end{figure}

\subsection{Local modifications on intervals of \texorpdfstring{$\mathscr{I}_2$}{I2}}

Let $I_{wi,j} \in \mathscr{I}_2 \setminus \{I_{1,0}\}$. Again we assume that $I_{ui',j'}$ is the interval to the left of $I_{wi,j}$.
From the prior subsection, we have already defined points $t_{ui',j'}^L$, $\til a_{ui',j'}$, $\til b_{wi,j}$, and $t_{wi,j}^R$, along with geodesics $g_{ui',j'}^L:[t_{ui',j'}^L,\til a_{ui',j'}] \to X$ and $g_{wi,j}^R:[\til b_{wi,j},t_{wi,j}^R] \to X$.
We proceed to define an arc $\Gamma_{(ui',j'),(wi,j)}$, points $\til b_{ui',j'}$, $t_{ui',j'}^R$, $t_{wi,j}^L$, and $\til a_{wi,j}$, and geodesics $g_{ui',j'}^R:[\til b_{ui',j'},t_{ui',j'}^R] \to X$ and $g_{wi,j}^L:[t_{wi,j}^L,\til a_{wi,j}] \to X$ as in the last subsection.
The only difference is that we take into account the modifications performed in \textsection\ref{sec:modif1}.
Set
\begin{align*}
\til b_{ui',j'} &= \min\{t \in [\til a_{ui',j'},\hat b_{ui',j'}] : \dist(h_u(t),\Gamma_{(ui',j'),(wi,j)}([\hat b_{ui',j'},\hat a_{wi,j}])) = \theta_{wi,j}\diam U_w\}, \\
t_{ui',j'}^R &= \max\{t \in [\hat b_{ui',j'},\hat a_{wi,j}] : d(h_u(\til b_{ui',j'}),\Gamma_{(ui',j'),(wi,j)}(t)) = \theta_{wi,j}\diam U_w\}, \\
\til a_{wi,j} &= \max\{t \in [\hat a_{wi,j},\til b_{wi,j}] : \dist(h_w(t),\Gamma_{(ui',j'),(wi,j)}([\hat b_{ui',j'},\hat a_{wi,j}])) = \theta_{wi,j}\diam U_w\},\\
t_{wi,j}^L &= \min\{t \in [t_{ui',j'}^R,\hat a_{wi,j}] : d(\Gamma_{(ui',j'),(wi,j)}(t),h_w(\til a_{wi,j})) = \theta_{wi,j}\diam U_w\}.
\end{align*}
Inequalities analogous to \eqref{eq:bt}, \eqref{eq:at}, \eqref{eq:at2}, and \eqref{eq:tt} hold for these points as well.

Fix a geodesic $g_{ui',j'}^R:[\til b_{ui',j'},t_{ui',j'}^R] \to X$ joining $h_u(\til b_{ui',j'})$ to $\Gamma_{(ui',j'),(wi,j)}(t_{ui',j'}^R)$ and a geodesic $g_{wi,j}^L:[t_{wi,j}^L,\til a_{wi,j}] \to X$ joining $\Gamma_{(ui',j'),(wi,j)}(t_{wi,j}^L)$ to $h_w(\til a_{wi,j})$.
Similar to \textsection\ref{sec:modif1}, the choice of $\theta_{wi,j}$ implies $g_{ui',j'}^R$ and $g_{wi,j}$ do not leave $2B_{wij}$, thus do not exit $\cal N_{wi}$.
The same bounds as in \eqref{eq:similarity} hold for the scaling factors of $g_{ui',j'}^R$ and $g_{wi,j}^L$.

Lastly,
\begin{align*}
d(h_w(\til a_{wi,j}),&h_w(\til b_{wi,j}))\\ 
& \ge d(h_w(\hat a_{wi,j}),h_w(\hat b_{wi,j})) - d(h_w(\hat a_{wi,j}),h_w(\til b_{wi,j})) - d(h_w(\hat b_{wi,j}),h_w(\til a_{wi,j})) \\
& \ge \kappa_w\diam U_w - \theta_{wi,j}\diam U_w - \theta_{wi,j}\diam U_w \\
& \ge \theta_{wi,j}\diam U_w,
\end{align*}
hence $\til b_{wi,j} - \til a_{wi,j} >0$ and, if $E$ is $c$-uniformly disconnected,
\begin{equation}\label{eq:ba}
\til b_{wi,j} - \til a_{wi,j} \ge (L\Lambda_w)^{-1}\theta\diam U_w.
\end{equation}

\begin{figure}[ht]
\centering
\includegraphics[width=0.55\textwidth]{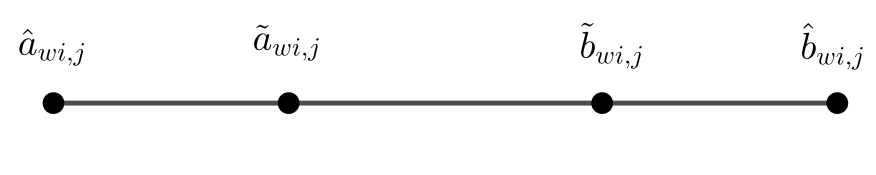}
\caption{The positions of $\hat{a}_{wi,j}$, $\til a_{wi,j}$, $\til b_{wi,j}$, and $\hat{b}_{wi,j}$.}
\end{figure}

Define $\gamma_{(ui',j'),(wi,j)}:[\til b_{ui',j'},\til a_{wi,j}] \to X$
by
\begin{equation} \label{eq:gamma}
\gamma_{(ui',j'),(wi,j)}(t) = \begin{cases} g_{ui',j'}^R(t), & t \in [\til b_{ui',j'},t_{ui',j'}^R], \\ \Gamma_{(ui',j'),(wi,j)}(t), & t \in [t_{ui',j'}^R,t_{wi,j}^L], \\ g_{wi,j}^L(t), & t \in [t_{wi,j}^L,\til a_{wi,j}], \end{cases}
\end{equation}
and note that $\gamma_{(ui',j'),(wi,j)}$ does not leave the ball $2B_{wij}$, since none of its components do.
By \eqref{eq:distXwiXwj}, note that if $\gamma_{(ui',j'),(wi,j)}$ and $\gamma_{(vi'',j''),(\omega\iota,\zeta)}$ are any two such curves, then
\begin{align}
\dist(\gamma_{(ui',j'),(wi,j)},\gamma_{(vi'',j''),(\omega\iota,\zeta)}) & \ge \dist(B_{wij},B_{vi''j''}) \label{eq:gammadist} \\
& \ge \tfrac{1}{12}(\rho_{wi}\epsilon_{wij}\diam U_{wij} + \rho_{vi''}\epsilon_{vi''j''}\diam U_{vi''j''}). \notag
\end{align}
Further, by \eqref{eq:beta*} and the choice of $\theta_{wi,j}$,
\begin{equation}\label{eq:gammaH}
\dist(z,H_{(ui',j'),(wi,j)}) \ge \frac{\beta^*}{L}\diam U_w - \theta_{wi,j}\diam U_w \ge \theta_{wi,j}\diam U_w
\end{equation}
for each $z \in \gamma_{(ui',j'),(wi,j)}$.


\section{Proof of Theorem \ref{thm:main1} and Theorem \ref{Thm:main2}}\label{sec:proof}

In this section we give the proof of Theorems \ref{thm:main1} and \ref{Thm:main2}. We start by defining a map 
\[ \Phi : [\hat{a}_{1,0},\hat{b}_{1,n_1}] \to U_1 .\] 
The definition of $\Phi$ is done in four steps. 

\begin{itemize}
\item Recall that $I_{1,0}$ is adjacent only to $I_{11,0}$, and that $I_{11,0}$ lies to the right of $I_{1,0}$. Moreover, a point $\til b_{1,0}$ has been defined but no point $\til a_{1,0}$. Define
\[ \Phi|[\hat{a}_{1,0},b_{1,0}](t) = 
\begin{cases}
h_{\varepsilon}(t), & t \in [\hat{a}_{1,0},\til b_{1,0}],\\
\gamma_{(1,0),(11,0)}(t), & t \in [\til b_{1,0}, b_{1,0}].
\end{cases} \]
\item Recall that $I_{1,n_1}$ is adjacent only to $I_{1n_1,n_{1n_1}}$, and that $I_{1n_1,n_{1n_1}}$ lies to the right of $I_{1,0}$. Moreover, a point $\til a_{1,n_1}$ has been defined but no point $\til b_{1,n_1}$. Define
\[ \Phi|[a_{1,n_1},\hat{b}_{1,n_1}](t) = 
\begin{cases}
\gamma_{(1n_1,n_{1n_1}),(1,n_1)}(t), & t \in [a_{1,n_1}, \til a_{1,n_1}],\\
h_{\varepsilon}(t), & t \in [\til a_{1,n_1}, \hat{b}_{1,n_1}].
\end{cases} \]
\item Let $I_{wi,j} \in \mathscr{I} \setminus \{I_{1,0},I_{1,n_1}\}$. Then there exists an interval $I_{ui',j'}$ adjacent to $I_{wi,j}$ on the left and there exists an interval $I_{vi'',j''}$ adjacent to $I_{wi,j}$ on the right. Define
\[ \Phi|I_{wi,j}(t) = 
\begin{cases}
\gamma_{(ui',j'),(wi,j)}(t), & t \in [a_{wi,j}, \til a_{wi,j}],\\
h_{w}(t), & t \in [\til a_{wi,j}, \til b_{wi,j}],\\
\gamma_{(wi,j),(vi'',j'')}(t), & t \in [\til b_{wi,j}, b_{wi,j}].
\end{cases} \]
\item It remains to define $\Phi$ on $J_{\infty}$. Define $\textbf{W}$ to be the set of all sequences $\textbf{a}\in\N^{\N}$ such that $\textbf{a}(m)\in \W$ for all $m\in\N$. (Here, if $\textbf{a} =i_1i_2\cdots\in\N^{\N}$, then $\textbf{a}(m) = i_1\cdots i_m\in\N^{m}$ is the truncation of $\textbf{a}$.) If $\textbf{w} \in \textbf{W}$, then by Lemma \ref{Lem:Decomposition}, $|J_{\textbf{w}(m)}| \leq 3^{1-|\textbf{w}(m)|}|J_{1}| = 3^{1-|\textbf{w}(m)|}$ and, by Proposition \ref{Prop:DefiningSequence}, $\diam{U_{\textbf{w}(m)}} \leq 2^{-|\textbf{w}(m)|}\diam{U_{\varepsilon}}$. Therefore, since $X$ is complete, elements of $J_{\infty}$ and elements of $E$ are uniquely determined by elements of $\textbf{W}$. In particular, there exist bijections $\pi_1: \textbf{W} \to J_{\infty}$ and $\pi_2 : \textbf{W} \to E$ such that for each $\textbf{w}\in \textbf{W}$, $\pi_1(\textbf{w})$ is the unique element of $\bigcap_{m=1}^{\infty}J_{\textbf{w}(m)}$ and $\pi_2(\textbf{w})$ is the unique element of $\bigcap_{m=1}^{\infty}U_{\textbf{w}(m)}$. Define now $\Phi$ on $[\hat{a}_{1,0},\hat{b}_{1,n_1}]$ by setting
\[
\Phi|J_{\infty} = \pi_2\circ (\pi_1)^{-1}.\]
\end{itemize}

The following proposition, which is the goal of this section, completes the proof of Theorem \ref{thm:main1} and Theorem \ref{Thm:main2}.

\begin{Prop}\label{Prop:Quasisymmetric}
The map $\Phi$ is an embedding of $[\hat{a}_{1,0},\hat{b}_{1,n_1}]$ into $X$ such that $\Phi(J_{\infty}) = E$ and $\Phi$ is locally bi-Lipschitz away from $J_{\infty}$. Furthermore, if $E$ is $c$-uniformly disconnected, there is a constant $H(\texttt{data}) \ge 1$ such that $\Phi$ is an $H$-weak quasisymmetry.
\end{Prop}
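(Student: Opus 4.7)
The plan is to verify the four conclusions (well-definedness, injectivity/continuity, local bi-Lipschitz, and weak quasisymmetry in the UD case) in this order, exploiting the fact that on $[\hat a_{1,0},\hat b_{1,n_1}]\setminus J_{\infty}$ the map $\Phi$ is built from finitely many bi-Lipschitz pieces that were constructed in Sections~\ref{Sec:Quasisymmetric2}--\ref{sec:modif} to be pairwise disjoint and to glue continuously, while on $J_{\infty}$ the map is the coding bijection $\pi_2\circ\pi_1^{-1}$. To check continuity on each $I_{wi,j}$, I would simply observe that the three defining pieces of $\Phi|I_{wi,j}$ in \eqref{eq:gamma} agree at the gluing points $\til a_{wi,j}$, $t_{wi,j}^L$, $t_{wi,j}^R$, $\til b_{wi,j}$ by the construction of the geodesics, and that at a shared endpoint $b_{ui',j'}=a_{wi,j}$ between adjacent Whitney intervals, both sides evaluate to the same point of $\gamma_{(ui',j'),(wi,j)}$. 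Injectivity off $J_{\infty}$ follows because, by Proposition~\ref{Prop:AllGen}(2), the images $\Im(h_w)$ are pairwise well-separated, and by construction (see \eqref{eq:gammadist} and \eqref{eq:gammaH}) each modification curve $\gamma_{(ui',j'),(wi,j)}$ stays in $2B_{wij}\subset \cal N_{wi}$ and at positive distance from all $\Im(h_v)$ outside $\{h_u(\hat I_{ui',j'}),h_w(\hat I_{wi,j})\}$; within a single piece the map is a composition of three bi-Lipschitz arcs with disjoint images. For continuity at $t\in J_{\infty}$ with address $\mathbf{w}\in\mathbf{W}$, note that any sequence $t_n\to t$ eventually lies in $J_{\mathbf{w}(m)}$ for each $m$, so $\Phi(t_n)$ eventually lies in a bounded neighborhood of $U_{\mathbf{w}(m)}$ (since all pieces used in $\Phi|J_{\mathbf{w}(m)}$ have images in $\bigcup_{i}\cal N_{\mathbf{w}(m)i}\subset N(U_{\mathbf{w}(m)},\eps_{\mathbf{w}(m)}\diam U_{\mathbf{w}(m)})$), and $\diam U_{\mathbf{w}(m)}\to 0$; hence $\Phi(t_n)\to \pi_2(\mathbf{w})=\Phi(t)$.

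The identity $\Phi(J_{\infty})=E$ is built into the definition via $\pi_1,\pi_2$. For the local bi-Lipschitz property on $[\hat a_{1,0},\hat b_{1,n_1}]\setminus J_{\infty}$, every such point has a neighborhood contained in at most two (or three) consecutive pieces among the bi-Lipschitz arcs $h_w$, $g^{L/R}$, and $\Gamma$. Each of these is bi-Lipschitz with known constants and the pieces meet at single points; a standard gluing argument (using that the images of the pieces are uniformly separated except at the common endpoint) then yields the local bi-Lipschitz estimate.

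The main obstacle, and the heart of the proposition, is the weak quasisymmetry under the UD hypothesis; this is where I would spend the bulk of the argument. Under UD, all of the constants produced in Sections~\ref{Sec:Quasisymmetric2}--\ref{sec:modif} ($L$ from Proposition~\ref{Prop:AllGen}(3), $L^*$ from the $\Gamma$-construction, the scaling bounds \eqref{eq:similarity} on the geodesics, and the lower bounds \eqref{eq:bt}, \eqref{eq:at}, \eqref{eq:at2}, \eqref{eq:tt}, \eqref{eq:ba} on the new interval endpoints) depend only on \texttt{data}, and the scale factor on $J_w$ is uniformly comparable to $\Lambda_w\sim\diam U_w/|J_w|$ because $n_w\le N(\texttt{data})$. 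My strategy to verify weak QS is to fix $x,y,z\in[\hat a_{1,0},\hat b_{1,n_1}]$ with $|x-y|\le|x-z|$ and locate the \emph{separation level} $w=w(x,z)\in\W$, namely the shortest word for which $\{x,z\}\subset J_w$ but no single $J_{wi}$ and no single gap interval $I_{w,i}$ contains both $x$ and $z$; the hypothesis $|x-y|\le|x-z|$ then forces $y\in J_w$ as well. On one hand, since $\Phi(J_w)$ is contained in a fixed neighborhood of $U_w$, one obtains $d(\Phi(x),\Phi(y))\le\diam\Phi(J_w)\lsim\diam U_w$ with constants depending only on \texttt{data}. On the other hand, $x$ and $z$ lie in different ``components'' (subword $J_{wi}$ or gap $I_{w,i}$) of $J_w$, so $\Phi(x)$ and $\Phi(z)$ lie in different $\cal N_{wi}$ (or a $\cal N_{wi}$ and an adjacent modified arc); by Lemma~\ref{lem:distofhats}, \eqref{eq:distXwiXwj}, \eqref{eq:gammadist}, and \eqref{eq:beta*}--\eqref{eq:tt} these sets are separated by $\gsim\min_i\diam U_{wi}\ge\delta\diam U_w$. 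Since $\delta=(4\xi)^{-1}$ depends only on \texttt{data}, combining the two estimates yields the desired weak quasisymmetry constant $H=H(\texttt{data})$. The case analysis required to formalize ``different components of $J_w$'' (subword vs.\ gap, extremal cases $I_{1,0},I_{1,n_1}$, and coincidences of $y$ with the modification transition points) is the most tedious step, but each sub-case reduces to the data-dependent estimates already established.
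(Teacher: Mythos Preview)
Your treatment of continuity, injectivity, $\Phi(J_\infty)=E$, and the local bi-Lipschitz property is essentially what the paper does (via Lemma~\ref{lem:inclusion} and Lemma~\ref{Lem:QuasisimilarConcatenation}), so that part is fine.

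The weak quasisymmetry argument, however, has a genuine gap. Your lower bound $d(\Phi(x),\Phi(z))\gtrsim \diam U_w$ at the separation level $w$ need not hold when $x$ and $z$ lie in \emph{adjacent} components of $J_w$, for instance $x\in I_{w,i}$ and $z\in J_{w(i+1)}$ both close to their common endpoint. In that situation $\Phi(x)\in\mathcal N_w$ (it sits on the arc $h_{w^\uparrow}|\hat I_{w,i}$ or on a modification curve inside $2X_{w(i+1)}$) while $\Phi(z)\in\mathcal N_{w(i+1)}$; but $w$ is the parent of $w(i+1)$, so Lemma~\ref{lem:distofhats} is inapplicable, and the sets $\mathcal N_w$ and $\mathcal N_{w(i+1)}$ genuinely overlap near $\partial U_{w(i+1)}$. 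None of the separation estimates you cite (\eqref{eq:distXwiXwj}, \eqref{eq:gammadist}, \eqref{eq:beta*}--\eqref{eq:tt}) prevent $d(\Phi(x),\Phi(z))$ from being much smaller than $\diam U_w$ here, so the ratio bound $d(\Phi(x),\Phi(y))/d(\Phi(x),\Phi(z))\le H$ does not follow from your two inequalities.

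The paper avoids this by a different case split: either $s$ and $\tau$ are separated by an entire Whitney interval $I_{wi,j}$ (then a lower bound of the type you want does hold, after looking one generation deeper), or $s$ and $\tau$ lie in \emph{adjacent} Whitney intervals. In the adjacent case the paper does not attempt a scale-$\diam U_w$ lower bound at all; instead it invokes Lemma~\ref{Lem:QuasisimilarConcatenation}, which says $\Phi$ restricted to any three consecutive Whitney intervals is an $(\til L,\Lambda_w)$-quasisimilarity, and this immediately gives the weak QS inequality when $|s-\tau|$ is small. You already have the ingredients for this (your ``standard gluing argument'' for the local bi-Lipschitz claim is exactly Lemma~\ref{Lem:QuasisimilarConcatenation}), but you need to bring that quasisimilarity estimate back into the QS case analysis rather than relying solely on separation of the $\mathcal N_{wi}$.
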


For the proof of the proposition, we require some lemmas.

\begin{Lem}\label{lem:inclusion}
For all $w\in \W \setminus\{\varepsilon, 1\}$ we have $\Phi(J_w) \subset \mathcal{N}_w$.
\end{Lem}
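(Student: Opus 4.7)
My plan is to prove the lemma by strong induction on $|w|$, working from deeper words outward. The inductive hypothesis is that $\Phi(J_v) \subset \mathcal{N}_v$ for every $v \in \W$ that strictly extends $w$. I would split
\[ J_w = \bigcup_{k=0}^{n_w} I_{w,k} \;\cup\; \bigcup_{i=1}^{n_w} J_{wi}, \]
handle the two families of subintervals separately, and deal with the residual set $J_\infty \cap J_w$ at the end by a closure/continuity argument.

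For the gap intervals $I_{w,k}$, the middle third $\hat I_{w,k}$ is mapped by $\Phi = h_{w^\uparrow}$ into $\mathcal{N}_w$ by Proposition \ref{Prop:AllGen}(1). The two endpoint-modification regions of $I_{w,k}$ carry $\Phi$ onto geodesic segments ($g_{w,k}^R$ or $g_{w,k}^L$) concatenated with pieces of bi-Lipschitz arcs of the form $\Gamma_{(\cdot,\cdot),(w,k)}$ or $\Gamma_{(w,k),(\cdot,\cdot)}$. By the construction in Sections \ref{Sec:Quasisymmetric2} and \ref{sec:modif}, each such piece is confined to a double ball $2B_{wk}$ with $B_{wk} \in \{X_{wk}, Y_{wk}\}$. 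Since the center of $B_{wk}$ lies on $\partial U_{wk} \subset \hat U_w$ and its radius is at most $\tfrac{1}{12}\epsilon_{wk}\rho_w\diam U_{wk}$, the term $\min_i \diam U_{wi}$ in the minimum defining $\epsilon_w$ forces $2B_{wk} \subset \mathcal{N}_w$, yielding $\Phi(I_{w,k}) \subset \mathcal{N}_w$.

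For the child intervals $J_{wi}$, the inductive hypothesis gives $\Phi(J_{wi}) \subset \mathcal{N}_{wi}$, and it remains to verify the nesting $\mathcal{N}_{wi} \subset \mathcal{N}_w$. Given $x \in \mathcal{N}_{wi}$, I would pick $y \in \hat U_{wi}$ with $d(x,y) < \epsilon_{wi}\diam U_{wi}$ and use $\partial U_{wi} \subset \hat U_w$ together with the minimum formula for $\epsilon_w$ (specifically the terms involving $\min_{i,j}\dist(\partial U_{wi}, U_{wij})$ and $\min_i \diam U_{wi}$) to bound $\dist(x, \hat U_w) \le \epsilon_w \diam U_w$. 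The residual piece $\Phi(J_\infty \cap J_w) = E \cap U_w$ is then absorbed by continuity: every point of $J_\infty \cap J_w$ is the limit of points of $\bigcup_u J_u$ with $u$ strictly extending $w$, so its image lies in $\overline{\mathcal{N}_w}$. The main obstacle is the nesting step, namely $\mathcal{N}_{wi} \subset \mathcal{N}_w$. This is where the combinatorial choice of $\epsilon_w$ from the start of Section \ref{Sec:Quasisymmetric1} does all the work: the minimum is rigged so that the hat-neighborhoods at consecutive scales interlock via the boundaries $\partial U_{wi}$, which is precisely what makes points close to $\hat U_{wi}$ close to $\hat U_w$. Once this interlocking is in hand, the rest of the argument is bookkeeping.
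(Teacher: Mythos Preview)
Your proposed induction ``from deeper words outward'' is not well-founded: you want to assume $\Phi(J_v)\subset\mathcal N_v$ for every strict extension $v$ of $w$ in order to deduce it for $w$, but $\W$ contains words of arbitrarily large length, so there is no base case at the bottom of the tree from which to start. In particular, your appeal to the ``inductive hypothesis'' for the child intervals $J_{wi}$ is circular as stated.

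The paper avoids this by arguing directly rather than inductively: it decomposes
\[
J_w\setminus J_\infty=\bigcup_{wu\in\W}\bigcup_{j=0}^{n_{wu}} I_{wu,j}
\]
with $wu$ ranging over \emph{all} descendants of $w$ at once, and then observes (essentially your first step, transplanted to level $wu$) that $\Phi(I_{vi,j})\subset\mathcal N_{vi}$ for each such $vi=wu$. The containment $\mathcal N_{vi}\subset\mathcal N_w$---which you correctly identify as the crux---is then invoked once, across all generations simultaneously, rather than threaded through an induction. So once you replace your reverse induction by this direct decomposition, your plan collapses to the paper's proof: your argument that the modification pieces stay inside $2B_{wk}\subset\mathcal N_w$ is a more explicit version of the paper's ``by design,'' and the one-step nesting $\mathcal N_{wi}\subset\mathcal N_w$ you isolate is exactly what iterates to give $\mathcal N_{wu}\subset\mathcal N_w$ for arbitrary descendants. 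For $J_\infty\cap J_w$ the paper simply records $\Phi(J_\infty\cap J_w)=E\cap U_w$ directly; your continuity argument only lands in $\overline{\mathcal N_w}$, which is slightly weaker than the stated conclusion.
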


\begin{proof}
We have that 
\[ J_w  = (J_{\infty} \cap J_w) \cup \bigcup_{wu \in \W}\bigcup_{j=0}^{n_{wu}}I_{wu,j} \]
for all $w,wu \in \W$.
For a fixed $wu \in \W$ and $j\in\{0,\dots,n_{wu}\}$, there exists $v\in \W\setminus\{\varepsilon\}$ and $i\in\{1,\dots,n_{v}\}$ such that $wu = vi$. Note that $v$ is either $w^{\uparrow}$ or a descendant of $w^{\uparrow}$. There exists an interval $I_{v'i',j'}$ adjacent to $I_{vi,j}$ on the left, and there exists an interval $I_{v''i'',j''}$ adjacent to $I_{vi,j}$ on the right. Then,
\begin{equation}\label{eq:union}
\Phi(I_{vi,j}) \subset h_v(I_{vi,j}) \cup \gamma_{(v'i',j'),(vi,j)} \cup \gamma_{(vi,j),(v''i'',j'')}.
\end{equation}
By design, each of the sets in the right-hand side of \eqref{eq:union} is contained in $\mathcal{N}_{vi}$ (see Proposition \ref{Prop:AllGen}, the discussion after \eqref{eq:beta*}, the discussion after \eqref{eq:t3t4}, and the discussion after \eqref{eq:tt}). Since $\mathcal{N}_{vi} \subset \mathcal{N}_w$, we conclude that $\Phi(J_w\setminus J_{\infty}) \subset \mathcal{N}_w$. Therefore,
\[\Phi(J_w) = \Phi(J_{w} \setminus J_{\infty}) \cup \pi_2\circ (\pi_1)^{-1}(J_{w} \cap J_{\infty}) = \Phi(J_{w} \setminus J_{\infty}) \cup (E\cap U_{w}) \subset \mathcal{N}_{w}. \qedhere\]
\end{proof}

\begin{Lem}\label{Lem:QuasisimilarConcatenation}
Let $I_{wi,j} \in \mathscr I$.
Suppose that $I_{ui',j'}$ is adjacent to $I_{wi,j}$ on the left and $I_{vi'',j''}$ is adjacent to $I_{wi,j}$ on the right and that $\Phi$ is defined on $I_{ui',j'} \cup I_{wi,j} \cup I_{vi'',j''}$.
Then $\Phi|(I_{ui',j'} \cup I_{wi,j} \cup I_{vi'',j''})$ is a bi-Lipschitz embedding.
Furthermore, if $E$ is $c$-uniformly disconnected, then there is a constant $\til L(\texttt{data}) \ge 1$ so that $\Phi|(I_{ui',j'} \cup I_{wi,j} \cup I_{vi'',j''})$ is an $(\til L,\Lambda_w)$-quasisimilarity.
\end{Lem}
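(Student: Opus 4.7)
The plan is to decompose $\Phi|(I_{ui',j'}\cup I_{wi,j}\cup I_{vi'',j''})$ into a bounded number of bi-Lipschitz pieces joined at their endpoints by short geodesics, and then invoke \cite[Lemma 4.2]{HVZ24}---which asserts that two bi-Lipschitz arcs joined by a geodesic between their closest points form a bi-Lipschitz concatenation---iteratively at each gluing point. On $I_{wi,j}$ the map $\Phi$ is by definition the concatenation of $\gamma_{(ui',j'),(wi,j)}$, $h_w|[\til a_{wi,j},\til b_{wi,j}]$, and $\gamma_{(wi,j),(vi'',j'')}$, and by \eqref{eq:gamma} each $\gamma$ is itself a concatenation of a geodesic $g^R$, the bi-Lipschitz approximating arc $\Gamma_{(\cdot,\cdot),(\cdot,\cdot)}|[t^R,t^L]$, and a geodesic $g^L$; symmetric descriptions hold on $I_{ui',j'}$ and $I_{vi'',j''}$.

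The second step is to verify that every interior gluing geodesic connects the closest points of the two adjacent bi-Lipschitz pieces, which is the hypothesis of \cite[Lemma 4.2]{HVZ24}. For instance, $g^R_{ui',j'}$ joins $h_u(\til b_{ui',j'})$ to $\Gamma_{(ui',j'),(wi,j)}(t^R_{ui',j'})$, where $\til b_{ui',j'}$ is the smallest parameter on $h_u$ achieving distance $\theta_{wi,j}\diam U_w$ from $\Gamma_{(ui',j'),(wi,j)}$ and $t^R_{ui',j'}$ is the largest parameter on $\Gamma$ realizing this same distance from $h_u(\til b_{ui',j'})$. The smallness of $\theta_{wi,j}$ relative to $\beta^*/L$, $\kappa_w$, $\epsilon_w$, $\epsilon_{wi}$, and $\rho_{wi}$, combined with the confinement of the entire construction to the ball $2B_{wij}$, should force $h_u(\til b_{ui',j'})$ and $\Gamma(t^R_{ui',j'})$ to be the closest points of the truncated arcs $h_u|[\hat a_{ui',j'},\til b_{ui',j'}]$ and $\Gamma_{(ui',j'),(wi,j)}|[t^R_{ui',j'},t^L_{wi,j}]$. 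Analogous verifications will handle the other interior geodesics $g^L_{wi,j}$, $g^R_{wi,j}$, and $g^L_{vi'',j''}$.

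Applying \cite[Lemma 4.2]{HVZ24} iteratively at each of the finitely many gluing points then yields that $\Phi|(I_{ui',j'}\cup I_{wi,j}\cup I_{vi'',j''})$ is bi-Lipschitz. For the uniformly disconnected case, Proposition \ref{Prop:AllGen}(3), the quasisimilarity conclusion of Lemma \ref{Lem:ApproximatingCurve}, and the bound \eqref{eq:similarity} together imply that each piece is an $(L',\Lambda_w)$-quasisimilarity for some $L'=L'(\texttt{data})$, since the ratios $\Lambda_u/\Lambda_w$ and $\Lambda_v/\Lambda_w$ are bounded above and below by \texttt{data}-constants (as $u$ and $v$ lie within one generation of $w$ in the dictionary, $n_w\le N(\texttt{data})$, and $\diam U$ scales geometrically by \eqref{eq:diamUwi}). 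I would then rescale the metric on $X$ by $\Lambda_w^{-1}$ to convert all pieces into bi-Lipschitz maps with uniformly bounded constants, invoke the iterated application of \cite[Lemma 4.2]{HVZ24} to obtain a bi-Lipschitz concatenation with constant depending only on \texttt{data}, and rescale back to recover the desired $(\til L,\Lambda_w)$-quasisimilarity.

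The step I expect to be the main obstacle is the ``closest points'' verification in the second paragraph: the definitions of $\til b$, $t^R$, $\til a$, and $t^L$ are a mixture of $\min$'s and $\max$'s of a threshold distance rather than true infima, so some care is needed to conclude that each geodesic is truly between closest points. This will ultimately reduce to distance comparisons using \eqref{eq:close1}, \eqref{eq:close2}, and \eqref{eq:gammaH}, together with the fact that $\theta_{wi,j}$ is chosen as a very small multiple of $\beta^*\epsilon_w\epsilon_{wi}\epsilon_{wij}\rho_{wi}$, which ensures the geodesics between the threshold points cannot ``overshoot'' either arc.
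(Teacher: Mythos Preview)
Your plan is largely the same as the paper's, and the identification of \cite[Lemma 4.2]{HVZ24} as the key input is exactly right. The paper also breaks the three-interval union into the thirteen subintervals you describe, checks that each geodesic $g^R$, $g^L$ joins the closest points of the two adjacent bi-Lipschitz pieces (this is asserted rather than spelled out, and your sketch of why the $\min$/$\max$ threshold definitions force this is correct), and, in the uniformly disconnected case, rescales by $\Lambda_w$ to reduce to the bi-Lipschitz setting before invoking the lemma.

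There is one organizational difference worth noting. You propose to obtain the global bi-Lipschitz bound by \emph{iterating} \cite[Lemma 4.2]{HVZ24} across all thirteen pieces. The paper does not iterate: it applies the lemma only to pairs $s,t$ lying in \emph{adjacent} subintervals, and for $s,t$ separated by at least one subinterval it gives direct lower bounds on $d(\Phi(s),\Phi(t))$ using Proposition \ref{Prop:AllGen}(4), \eqref{eq:gammadist}, and \eqref{eq:gammaH} (together with the upper bound $d(\Phi(s),\Phi(t))\le \diam\mathcal{N}_{w^\uparrow}\lesssim \diam U_w$ from Lemma \ref{lem:inclusion}). Your iterative route can be made to work, but be aware that each application of \cite[Lemma 4.2]{HVZ24} requires the geodesic to join closest points of the \emph{accumulated concatenation} and the next arc, not merely of the two original neighboring pieces; verifying this at each step amounts to exactly the same distance comparisons ($\dist(\Im h_u,\Im h_w)\ge \kappa_w\diam U_w$, the geodesics have length $\theta_{wi,j}\diam U_w\ll \kappa_w\diam U_w$, etc.) that the paper uses in its direct argument. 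So the two approaches are equivalent in content; the paper's case split is a bit cleaner because it avoids tracking how the closest-points hypothesis propagates through the iteration.
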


\begin{proof}
Let $I_{\upsilon \iota,\zeta}$ be the interval to the left of $I_{ui',j'}$ and let $I_{\omega \iota',\zeta'}$ be the interval to the right of $I_{vi'',j''}$.
For brevity, we write $\upsilon$ in place of $(\upsilon\iota,\zeta)$, $u$ in place of $(ui',j')$, $w$ in place of $(wi,j)$, $v$ in place of $(vi'',j'')$, and $\omega$ in place of $(\omega\iota',\zeta')$.
We make the assumption that both $t_u^L,t_v^R \in [a_u,b_v]$, but it does not affect the proof if this is not the case.
There are thirteen subintervals to be considered, which are given by adjacent endpoints of the following string of inequalities:
\[ a_u < t_u^L < \til a_u < \til b_u < t_u^R < t_w^L < \til a_w < \til b_w < t_w^R < t_v^L < \til a_v < \til b_v < t_v^R < b_v. \]
Fix $s,t \in I_u \cup I_w \cup I_v$.
By design, $\Phi$ restricted to each subinterval is bi-Lipschitz, and if $E$ is uniformly disconnected, then $\Phi$ restricted to each subinterval is a quasisimilarity, quantitatively, so we need not worry about the case in which $s$ and $t$ lie in the same subinterval.

If $s$ and $t$ lie in adjacent subintervals then one lies in a subinterval on which $\Phi$ is defined by a geodesic and the other lies in a subinterval on which $\Phi$ is defined by a bi-Lipschitz arc.
Say $\Phi(s)$ lies on a geodesic $g$ and $\Phi(t)$ lies on a bi-Lipschitz arc $f$.
Since this geodesic $g$ is chosen to join closest points of two bi-Lipschitz arcs (with $f$ being one of them), \cite[Lemma 4.2]{HVZ24} implies that the concatenation of $g$ and $f$ is still bi-Lipschitz.
(The idea of the cited lemma is that a geodesic joining some point $p$ to a nearest point of a bi-Lipschitz arc must intersect the bi-Lipschitz arc at a large angle, otherwise it is not a nearest point to $p$.)
If $E$ is uniformly disconnected, then the maps $g$ and $f$ are quasisimilarities with scaling factor $\Lambda_w$.
Rescaling the metric by this factor, these quasisimilarities become bi-Lipschitz arcs, where again \cite[Lemma 4.2]{HVZ24} can be applied (the notion of closest points is preserved under this scaling of the metric).
Upon returning to the original metric, the bi-Lipschitz concatenation becomes a quasisimilarity with scaling factor $\Lambda_w$.

Assume now that $s$ and $t$ are separated by a subinterval above.
Then $\Phi(s)$ and $\Phi(t)$ lie in disjoint bi-Lipschitz arcs, hence we obtain the desired bounds and the first claim of the lemma. We now assume for the rest of the proof that $E$ is uniformly disconnected. Note that $s,t \in J_{w^{\uparrow}}$.
By Lemma \ref{lem:inclusion}, $\Phi(s),\Phi(t) \in \cal N_{w^{\uparrow}}$, so by \eqref{eq:diamUwi},
\[ d(\Phi(s),\Phi(t)) \le \diam\cal N_{w^{\uparrow}} \le 2\diam U_{w^{\uparrow}} \le 8\xi\diam U_w. \]
On the other hand, $s$ and $t$ are separated by a subinterval.
By \eqref{eq:t3t4}, \eqref{eq:at2}, \eqref{eq:tt}, and \eqref{eq:ba}, the diameter of the smallest subinterval above is still of length at least
\[ \theta\min\{L^{-1},(L^*)^{-1}\}\min\{\Lambda_u^{-1}\diam U_u,\Lambda_w^{-1}\diam U_w,\Lambda_v^{-1}\diam U_v\}. \]
By using \eqref{eq:Lambda}, we obtain
\[ |s - t| \ge (2N + 1)^{-1}\theta\min\{L^{-1},(L^*)^{-1}\}\Lambda_w^{-1}\diam U_w. \]
It remains to provide lower bounds on $d(\Phi(s),\Phi(t))$ in terms of $|s - t|$, which we do by considering three cases.

\emph{Case 1.} Suppose that $s$ and $t$ lie in distinct subintervals on which $\Phi$ is defined by $h_u$, $h_w$, or $h_v$.
Namely, $s,t \in [\til a_u,\til b_u] \cup [\til a_w,\til b_w] \cup [\til a_v,\til b_v]$.
For convenience, we assume that $s \in [\til a_u,\til b_u]$ and $t \in [\til a_w,\til b_w]$, but the other cases are identical.
By Proposition \ref{Prop:AllGen}(4), we have
\[ d(\Phi(s),\Phi(t)) \ge \dist(\Im(h_u),\Im(h_w)) \ge L^{-1}\diam U_w, \]
while, since $s,t \in J_{w^{\uparrow}}$, by \eqref{eq:Lambda},
\begin{equation}\label{eq:stdiamUw}
|s - t| \le |J_{w^{\uparrow}}| \le (2n_{w^{\uparrow}} + 1)(2n_w + 1)|J_{wi}| \le (2N + 1)^2|J_{wi}| = (2N + 1)^2\Lambda_w^{-1}\diam U_w.
\end{equation}

\emph{Case 2.} Suppose that $s$ and $t$ lie in distinct subintervals on which $\Phi$ is defined by $\gamma_{\upsilon,u}$, $\gamma_{u,w}$, $\gamma_{w,v}$, or $\gamma_{v,\omega}$.
Namely, $s,t \in [a_u,\til a_u] \cup [\til b_u,\til a_w] \cup [\til b_w,\til a_v] \cup [\til b_v,b_v]$.
For convenience, we assume that $s \in [\til b_u,\til a_w]$ and $t \in [\til b_w,\til a_v]$, but the other cases are identical.
By \eqref{eq:gammadist} and \eqref{eq:diamUwi},
\[ d(\Phi(s),\Phi(t)) \ge \dist(\gamma_{u,w},\gamma_{w,v}) \ge \tfrac{1}{12}\rho_w\epsilon_{wi}\diam U_{wi} \ge \tfrac{1}{48}\xi^{-1}\rho_w\epsilon_{wi}\diam U_w, \]
while the bound of \eqref{eq:stdiamUw} still holds.
Recall that since $E$ is uniformly disconnected, the constants $\rho_w$ and $\epsilon_{wi}$ are independent of $w$ and $wi$, respectively, and depend only on the $\texttt{data}$.

\emph{Case 3.} Suppose that $s$ lies in a subinterval on which $\Phi$ is defined by $h_u$, $h_w$, or $h_v$ and $t$ lies in a subinterval on which $\Phi$ is defined by $\gamma_{\upsilon,u}$, $\gamma_{u,w}$, $\gamma_{w,v}$, or $\gamma_{v,\omega}$.
Namely, $s$ belongs to an interval considered in Case 1, and $t$ belongs to an interval considered in Case 2.
For convenience, we assume that $s \in [\til a_u,\til b_u]$ and $t \in [\til b_w,\til a_v]$, but the other cases are identical.
By \eqref{eq:gammaH},
\[ d(\Phi(s),\Phi(t)) \ge \dist(\gamma_{w,v},\Im(h_v)) \ge \dist(\gamma_{w,v},H_{w,v}) \ge \theta\diam U_w \]
and again the bound of \eqref{eq:stdiamUw} holds.
\end{proof}

\begin{Lem}\label{Lem:PhiDist}
Suppose that $E$ is $c$-uniformly disconnected and that $I_{ui',j'}$ and $I_{wi,j}$ are not adjacent intervals.
Then
\[ \dist(\Phi(I_{ui',j'}),\Phi(I_{wi,j})) \ge (4\xi)^{-2}\theta\min\{\diam U_u,\diam U_w\}. \]
\end{Lem}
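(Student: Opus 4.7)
The plan is to argue by case analysis on the combinatorial relation between the words $\alpha := ui'$ and $\beta := wi$ in the dictionary $\W$. In every case, the starting point is Lemma \ref{lem:inclusion}, giving $\Phi(I_{\alpha, j'}) \subset \mathcal{N}_\alpha$ and $\Phi(I_{\beta, j}) \subset \mathcal{N}_\beta$, together with the explicit piecewise decomposition of $\Phi$ on each interval from Sections \ref{Sec:Quasisymmetric2} and \ref{sec:modif}: a middle $h$-piece flanked by two $\gamma$-curves glued via geodesic patches, each $\gamma$-curve being confined to a $\tfrac{3}{2}$-enlargement of a small $X_\bullet$ or $Y_\bullet$ ball.

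In the generic case where $\alpha, \beta$ are distinct, share no common parent, and neither is the parent of the other, Lemma \ref{lem:distofhats} applies and yields
\[
\dist(\mathcal{N}_\alpha, \mathcal{N}_\beta) \geq \e_\alpha \diam U_\alpha + \e_\beta \diam U_\beta.
\]
Since $E$ is $c$-uniformly disconnected, $\e_\alpha = \e_\beta = (4\xi)^{-3}$ by design, and since $\alpha = ui'$ and $\beta = wi$ are children of $u$ and $w$ respectively, inequality \eqref{eq:diamUwi} gives $\diam U_\alpha \geq \delta \diam U_u = (4\xi)^{-1}\diam U_u$ and similarly for $\beta$. The right-hand side is thus at least $(4\xi)^{-4}\min\{\diam U_u, \diam U_w\}$, which dominates $(4\xi)^{-2}\theta\min\{\diam U_u, \diam U_w\}$ since the construction ensures $\theta \leq (4\xi)^{-9} \leq (4\xi)^{-2}$ for $\xi \geq 1$.

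The remaining close cases---$\alpha = \beta$, $\alpha, \beta$ siblings, or $\alpha, \beta$ in parent--child relation---require the finer piecewise description of $\Phi$. The $\gamma$-pieces are separated from one another by \eqref{eq:gammadist}, from foreign $h$-arcs by \eqref{eq:gammaH}, and live near disjoint $X_\bullet, Y_\bullet$ balls whose mutual separations are controlled by \eqref{eq:distXwiXwj}--\eqref{eq:distXwiYwj}, so the real work is to compare the two middle $h$-pieces of $\Phi(I_{\alpha,j'})$ and $\Phi(I_{\beta,j})$. In the equal and sibling cases the two middle pieces are images of disjoint subintervals under a \emph{single} quasisimilarity $h_{(\cdot)}$ from Proposition \ref{Prop:AllGen}(3), while in the parent--child case they lie in the disjoint images $\Im(h_u)$ and $\Im(h_w)$ whose mutual distance is at least $L^{-1}\min\{\diam U_u, \diam U_w\}$ by Proposition \ref{Prop:AllGen}(4).

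The main obstacle is the sibling case, $u = w$ with $i' \neq i$, where Lemma \ref{lem:distofhats} is not available. I plan to resolve it by observing that the middle pieces $h_u(\hat I_{\alpha,j'})$ and $h_u(\hat I_{\beta,j})$ come from disjoint subintervals of $[0,1]$: since $\hat I_{\alpha,j'} \subset J_\alpha$ and $\hat I_{\beta,j} \subset J_\beta$ lie in distinct child intervals of $J_u$, they are separated by at least one intermediate $I_{u,k}$ of length $\Lambda_u^{-1}\diam U_u$ (by Lemma \ref{Lem:Decomposition}). The $(L,\Lambda_u)$-quasisimilarity of $h_u$ then upgrades this to $d(h_u(s), h_u(t)) \geq L^{-1}\diam U_u$ for all relevant $s, t$, which is far above the target $(4\xi)^{-2}\theta\diam U_u$; the equal case is dispatched analogously using $h_{\alpha^{\uparrow}}$ applied to $\hat I_{\alpha,j'}, \hat I_{\alpha,j}$ separated by a middle interval of length $\Lambda_\alpha^{-1}\diam U_\alpha$.
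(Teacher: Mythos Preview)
Your approach is correct but organized differently from the paper's. The paper splits into just three cases according to which \emph{piece} of $\Phi$ each point lies in (middle $h$-arc versus endpoint $\gamma$-patch), invoking Proposition~\ref{Prop:AllGen}(4), \eqref{eq:gammaH}, and \eqref{eq:gammadist} respectively; it never performs a word-level case analysis. You instead first split on the combinatorial relation between $\alpha=ui'$ and $\beta=wi$, dispatching the generic situation in one stroke via Lemma~\ref{lem:distofhats} and the containment $\Phi(I_{\alpha,j'})\subset\mathcal N_\alpha$ from Lemma~\ref{lem:inclusion}, and only then fall back to the piece-by-piece comparison for the equal, sibling, and parent--child cases. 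The paper's route is shorter overall, since its three piece-type cases already cover every pair of non-adjacent intervals uniformly, so your generic-case detour through $\mathcal N$-separation is not strictly needed. On the other hand, your combinatorial pre-sort has a real payoff in the middle-versus-middle comparison when $u=w$ (which does occur, e.g.\ for $I_{ui',j'}$ and $I_{ui,j}$ with $i'\ne i$): there Proposition~\ref{Prop:AllGen}(4) is silent, and one must instead use the $(L,\Lambda_u)$-quasisimilarity of $h_u$ from Proposition~\ref{Prop:AllGen}(3) together with the gap $|s-t|\gtrsim\Lambda_u^{-1}\diam U_u$ between the relevant $\hat I$-intervals, exactly as you do. The paper's Case~1 cites only Proposition~\ref{Prop:AllGen}(4), so your handling of this subcase is in fact more careful.
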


\begin{proof}
Let $s \in I_{ui',j'}$ and $t \in I_{wi,j}$.
There are three cases to consider.

\emph{Case 1.} Assume $s \in [\til a_{ui',j'},\til b_{ui',j'}]$ and $t \in [\til a_{wi,j},\til b_{wi,j}]$.
As $\Phi(s) = h_u(s)$ and $\Phi(t) = h_w(t)$, Proposition \ref{Prop:AllGen}(4) implies $d(\Phi(s),\Phi(t)) \ge L^{-1}\max\{\diam U_u,\diam U_w\}$.

\emph{Case 2.} Assume $s \in I_{ui',j'} \setminus [\til a_{ui',j'},\til b_{ui',j'}]$ and $t \in [\til a_{wi,j},\til b_{wi,j}]$.
Note that $\Phi(t) = h_w(t)$ and $\Phi(s) = \gamma(s)$ for some curve $\gamma$ defined as in \eqref{eq:gamma}.
In particular, either $\Phi(s) = \gamma_{(vi'',j''),(ui',j')}(s)$ or $\Phi(s) = \gamma_{(ui',j'),(vi'',j'')}(s)$ for some interval $I_{vi'',j''}$ adjacent to $I_{ui',j'}$.
In either case, by \eqref{eq:gammaH},
\[ d(\Phi(s),\Phi(t)) \ge \theta\diam U_u. \]

\emph{Case 3.} Assume $s \in I_{ui',j'} \setminus [\til a_{ui',j'},\til b_{ui',j'}]$ and $t \in I_{wi,j} \setminus [\til a_{wi,j},\til b_{wi,j}]$.
Note that either $\Phi(s) = \gamma_{(vi'',j''),(ui',j')}(s)$ or $\Phi(s) = \gamma_{(ui',j'),(vi'',j'')}(s)$ and either $\Phi(t) = \gamma_{(\omega i''',j'''),(wi,j)}(t)$ or $\Phi(t) = \gamma_{(wi,j),(\omega i''',j''')}(t)$.
By \eqref{eq:gammadist} and \eqref{eq:diamUwi}, we get
\begin{align*}
d(\Phi(s),\Phi(t)) & \ge \frac{1}{12}(\rho_{ui'}\epsilon_{ui'j'}\diam U_{ui'j'} + \rho_{wi}\epsilon_{wij}\diam U_{wij}) \\
& \ge (4\xi)^{-2}\theta\min\{\diam U_u,\diam U_w\}. \qedhere
\end{align*}
\end{proof}

\begin{proof}[{Proof of Proposition \ref{Prop:Quasisymmetric}}]
For the first claim, it suffices to prove that $\Phi$ is an embedding, as from Lemma \ref{Lem:QuasisimilarConcatenation} we have that $\Phi$ is locally bi-Lipschitz away from $J_{\infty}$. To prove continuity of $\Phi$ on $J_{\infty}$, fix $\bw \in \bW$ and fix $\e>0$. There exists $n\in\N$ such that $\diam{\mathcal{N}_{\bw(n)}} \leq 2^{-n}\diam{U_{\varepsilon}} < \e$. Then, the interior of $J_{\bw(n)}$ is a neighborhood of $\pi_1(\bw)$ with a diameter of at most $3^{-n}$ and $\Phi(\pi_1(\bw)) \in \Phi(J_{\bw(n)}) \subset \mathcal{N}_{\bw(n)}$ by Lemma \ref{lem:inclusion}. Therefore, $\Phi$ is continuous.

By design, it is easy to see that $\Phi|[\hat{a}_{1,0},\hat{b}_{1,n_1}]\setminus J_{\infty}$ is injective. Moreover, if $s\in J_{\infty}$ and $t\in [\hat{a}_{1,0},\hat{b}_{1,n_1}] \setminus J_{\infty}$, then $\Phi(s) \in E$ while $\Phi(t) \in X\setminus E$. Finally, since $\pi_1$ and $\pi_2$ are bijections, it follows that $\Phi|J_{\infty}$ is bijective. This shows that $\Phi$ is injective which completes the first claim of the proposition.

For the second claim, assume for the rest of the proof that $E$ is $c$-uniformly disconnected.
Let $s,t,\tau \in [\hat a_{1,0},\hat b_{1,n_1}]$ with $|s - t| \le |s - \tau|$.
The proof is a case study of the positions of $s$, $t$, and $\t$.

\emph{Case 1.} Suppose that $s$ and $\t$ are separated by a set $I_{wi,j}$ for some $w \in \W$, $i \in \{1,\dots,n_w\}$, and $j \in \{0,\ldots,n_{wi}\}$.
We may assume that $I_{wi,j}$ is a largest such interval, that is, $|w|$ is minimal among all words $v \in \W$ for which $I_{vi',j'}$ separates $s$ and $\t$ for some $i' \in \{1,\ldots,n_v\}$ and $j'
\in \{0,\dots,n_{vi'}\}$.

Recall that the interval $I_{wi,j}$ is contained in the interval $J_{wi}$, which in turn is contained in the interval $J_w$.
In addition to the subinterval $J_{wi}$, the interval $J_w$ contains two other subintervals of note: the interval $I_{w,i - 1}$ is adjacent to $J_{wi}$ on the left and the interval $I_{w,i}$ is adjacent to $J_{wi}$ on the right.
The interval $J_{wi}$ contains subintervals of the form $J_{wik}$ for $k \in \{1,\ldots,n_{wi}\}$ and of the form $I_{wi,k}$ for $k \in \{0,\ldots,n_{wi}\}$.
In particular, the interval $J_{wij}$ is adjacent to $I_{wi,j}$ on the left (except if $j = 0$, where instead $I_{w,i - 1}$ is adjacent to $I_{wi,j}$ on the left) and the interval $J_{wi(j + 1)}$ is adjacent to $I_{wi,j}$ on the right (except if $j = n_{wi}$, where instead $I_{w,i}$ is adjacent to $I_{wi,j}$ on the right).
Within the interval $J_{wij}$, there are subintervals of the form $J_{wijk}$ for $k \in \{1,\ldots,n_{wij}\}$ and of the form $I_{wij,k}$ for $k \in \{0,\ldots,n_{wij}\}$, while in the interval $J_{wi(j + 1)}$, there are subintervals of the form $J_{wi(j + 1)k}$ for $k \in \{1,\ldots,n_{wij}\}$ and of the form $I_{wi(j + 1)k}$ for $k \in \{0,\ldots,n_{wij}\}$.

This is the maximal extent to which we subdivide the intervals around $I_{wi,j}$: the furthest $s$ and $\t$ can be is when one is in $I_{w,i - 1}$ and the other is in $I_{w,i}$, as otherwise one of these intervals would separate $s$ and $\t$, which contradicts the maximality of $I_{wi,j}$.
We consider intervals up to one generation after $I_{wi,j}$ since we can obtain the desired bounds in these cases, but this is not always possible when considering intervals of the same generation of $I_{wi,j}$.

We note that since $s,\t \in J_w$, $|s - t| \le |s - \t| \le |J_w|$ and because $s \in J_w$, it must be that $s,t \in J_{w^{\uparrow}}$.
We now obtain the upper bound
\[ d(\Phi(s),\Phi(t)) \le \diam\cal N_{w^{\uparrow}} \le 2\diam U_{w^{\uparrow}} \le 8\xi\diam U_w \] 
by Lemma \ref{lem:inclusion} and \eqref{eq:diamUwi}.
The remainder of Case 1 is a calculation of a lower bound of $d(\Phi(s),\Phi(\t))$ in terms of $\diam U_w$ based on the placement of $s$ and $\t$ within the intervals specified above.

\emph{Case 1.1.} Suppose that $s$ and $\t$ lie in distinct intervals of the collection
\[ \{I_{w,i - 1},I_{w,i}\} \cup \{I_{wij,k} : k \in \{0,\ldots,n_{wij}\}\} \cup \{I_{wi(j + 1),k} : k \in \{0,\ldots,n_{wi(j + 1)}\}\}. \]
By Lemma \ref{Lem:PhiDist} and \eqref{eq:diamUwi},
\[ d(\Phi(s),\Phi(\t)) \ge (4\xi)^{-2}\theta\diam U_{wi} \ge (4\xi)^{-3}\theta\diam U_w. \]

\emph{Case 1.2.} Suppose that $s$ and $\t$ lie in distinct intervals of the collection
\[ \{J_{wijk} : k \in \{1,\ldots,n_{wij}\}\} \cup \{J_{wi(j + 1)k} : k \in \{1,\ldots,n_{wi(j + 1)}\}\}. \]
Note that since $s$ and $\t$ are separated by the interval $I_{wi,j}$, it must be the case that one lies in an interval of the left collection and the other must lie in the interval of the right collection, say $s \in J_{wijk_1}$ and $\t \in J_{wijk_2}$.
By Lemma \ref{lem:inclusion}, Lemma \ref{lem:distofhats}, and \eqref{eq:diamUwi},
\begin{align*}
d(\Phi(s),\Phi(\t)) & \ge \dist(\cal N_{wijk_1},\cal N_{wi(j + 1)k_2}) \ge \frac{(\diam U_{wijk_1} + \diam U_{wi(j + 1)k_2})}{(4\xi)^{3}}\ge \frac{\diam U_w}{2(4\xi)^{6}}.
\end{align*}

\emph{Case 1.3.} Suppose that $s$ lies in an interval in the collection considered in Case 1.1 and $\t$ lies in an interval in the collection considered in Case 1.2.
Again we note that one of $s$ or $\t$ lies to the left of $I_{wi,j}$ and the other lies to the right of $I_{wi,j}$.
As in Case 1.2, by Lemma \ref{lem:inclusion}, Lemma \ref{lem:distofhats}, and \eqref{eq:diamUwi},
\[ d(\Phi(s),\Phi(\t)) \ge 2(4\xi)^{-3}\diam U_w. \]

\emph{Case 2.} Suppose that $s$ and $\t$ lie in adjacent intervals, say $s \in I_{wi,j}$ and $\t \in I_{ui',j'}$.
The proof again divides into two subcases, this time based on the size of $|s - \t|$ in comparison to $\diam U_w$.

\emph{Case 2.1.} Suppose that $|s - \t| \le (2N + 1)^{-2}\Lambda_w^{-1}\diam U_w$.
Note that by \eqref{eq:Lambda},
\[ |s - t| \le |s - \t| \le (2N + 1)^{-2}\Lambda_w^{-1}\diam U_w \le (2n_w + 1)^{-1}(2n_{wi} + 1)^{-1}|J_{wi}| \le |I_{vi'',j''}|, \] 
where $I_{vi'',j''}$ is the other interval adjacent to $I_{wi,j}$.
It follows that $t \in I_{ui',j'} \cup I_{wi,j} \cup I_{vi'',j''}$, and by Lemma \ref{Lem:QuasisimilarConcatenation}, the map $\Phi|(I_{ui',j'} \cup I_{wi,j} \cup I_{vi'',j''})$ is a $(\til L,\Lambda_w)$-quasisimilarity.
Therefore
\[ d(\Phi(s),\Phi(t)) \le \til L\Lambda_w|s - t| \le \til L\Lambda_w|s - \t| \]
and $d(\Phi(s),\Phi(\t)) \ge (\til L)^{-1}\Lambda_w|s - \t|$.

\emph{Case 2.2.} Suppose that $|s - \t| > (2N + 1)^{-2}\Lambda_w^{-1}\diam U_w$.
As in Case 1, it follows that $s,t,\t \in J_{w^{\uparrow}}$.
By Lemma \ref{lem:inclusion} and \eqref{eq:diamUwi},
\[ d(\Phi(s),\Phi(t)) \le \diam\cal N_{w^{\uparrow}} \le 2\diam U_{w^{\uparrow}} \le 8\xi\diam U_w < 8(2N + 1)^2\Lambda_w|s - \t|. \]
On the other hand, by Lemma \ref{Lem:QuasisimilarConcatenation}, $d(\Phi(s),\Phi(\t)) \ge (\til L)^{-1}\Lambda_w|s - \t|$.
\end{proof}

\bibliographystyle{alpha}
\bibliography{biblio}
\end{document}